\newcounter{cnstcnt}
\newcommand{\8}{\infty}
\renewcommand{\P}{\mathbb{P}}
\newcommand{\E}{\mathbb{E}}
\newcommand{\R}{\mathbb{R}}
\newcommand{\N}{\mathbb{N}}
\newcommand{\eps}{\varepsilon}
\renewcommand{\a}{\alpha}
\renewcommand{\b}{\beta}
\renewcommand{\l}{\lambda}
\newcommand{\cC}{\mathcal{C}}
\begin{document}

\title[Large deviations for BPRE]{ Precise large deviation estimates for branching process in random environment}
	\author[D. Buraczewski, P. Dyszewski]{Dariusz Buraczewski, Piotr Dyszewski}

	\address{Instytut Matematyczny, Uniwersytet Wroclawski, Plac Grunwaldzki 2/4, 50-384 Wroclaw, Poland}
	\email{dbura@math.uni.wroc.pl, pdysz@math.uni.wroc.pl}
	\thanks{The  research was partially supported by the National Science Center, Poland (Sonata Bis, grant number DEC-2014/14/E/ST1/00588)}

\keywords{Branching process, Random environment, Large deviations, First passage time, Random walk, Central limit theorem, Law of large numbers}
\subjclass[2010]{MSC: 60J80; 60F10}

\maketitle
\begin{abstract}

We consider  a branching process in random environment $\{Z_n\}_{n\geq 0}$, which is a~population growth process
where individuals reproduce independently of each other with the 	reproduction law randomly picked at each generation.
  We describe precise asymptotics of upper large deviations, i.e. $\P[Z_n > e^{\rho n}]$. Moreover in the subcritical case, under  the Cram\'er condition on the mean
	of the reproduction law, we  investigate
	large deviation estimates for the first passage times of the branching process in question and of its total population size.

\end{abstract}


\section{Introduction}\label{sec:intro}

This work concentrates on the branching process in random environment (BPRE) introduced by Smith and Wilkinson~\cite{smith1969branching} as a  one of possible generalizations of the classical Galton-Watson process. BPRE is a population growth process
where individuals reproduce independently of each other with the 	reproduction law randomly picked at each generation.
 To put it formally, let $Q$ be a random measure on the set of non-negative integers $\NN$, that is a measurable function taking values in
$\mathcal{M} = \mathcal{M}(\NN)$ the set of all probability measures on $\NN$ equipped with the total variation distance. Then a sequence of independent identically distributed (iid) copies of $Q$, say
$\mathcal{Q}=\{ Q_n \}_{n \geq 0}$  is called a random environment. The sequence $Z=\{Z_n\}_{n \geq 0}$ is called
a~branching process in random environment $\mathcal{Q}$ if $Z_0 =1$, and
\begin{equation}\label{eq:s9}
	Z_{n+1} {=} \sum_{k=1}^{Z_n} \xi^n_k,
\end{equation}
where given $\mathcal{Q}$, $\{\xi^n_k\}_{ k \geq 0}$ are iid and independent of $Z_n$ with common distribution $Q_n$.   We would like to mention that, apart from being interesting on its own merits, as pointed out by Kesten, Kozlov and Spitzer~\cite{kesten1975limit} BPRE bears some connections to the path structure of nearest neighbour random walk in site-random environment. For a more detailed discussion regarding BPRE itself, we refer the reader to the classical book of Athreya and Ney~\cite{athreya2012branching} or a recent monograph of Kersting and
Vatutin~\cite{kersting2017discrete}.\medskip

Fundamental questions arising after introducing the process $Z$ concern its asymptotic behavior,
which  is mostly determined by the environment and
usually only  slightly depends on the detailed structure of the offspring distribution.
It turns out, that like in the case of the classical Galton-Watson process, the answer can be expressed solely in terms of the mean of the reproduction law, i.e.
\begin{equation}\label{eq: mn}
	A_k = \sum_{j=0}^\8 j Q_k(j).
\end{equation}
To be precise, denote by $\Pi_n$ the quenched expectation of $Z_n$, i.e.
$\Pi_n =	\EE[Z_n \: | \: \mathcal{Q}] = \prod_{k=0}^{n-1}A_k$
and by $A$ denote a generic copy of $A_k$.
Depending  on the behavior of the associated random walk $S_n = \log \Pi_n$ one distinguishes three cases. BPRE $Z$ is called supercritical when $S_n$ drifts to $+\infty$, subcritical when $S_n$ drifts to $-\infty$ and critical, otherwise. It is well known that if $\E[|\log A|]<\infty$, then  
$Z$ is supercritical (resp. subcritical, critical) if $\E [\log A]>0$ (resp. $\E [\log A]<0$, $\E [\log A]=0$) (Proposition 2.1 in
\cite{kersting2017discrete}).
 As a consequence it can be verified that the process $Z$ dies out  with probability 1,  
whenever $\EE[\log A] \leq 0$, whereas in the supercritical case the survival probability is positive and the population grows exponentially on the survival set (Theorems 2.1 and 2.4 in \cite{kersting2017discrete}).
 In the supercritical case the parameter  $\E[\log A]$ determines the rate of increase  of $\log Z_n$ (see Tanny \cite{tanny1} for the corresponding law of large numbers and Huang, Liu \cite{huang2012moments} for the central limit theorem).
 The relation between asymptotic behaviour of $Z$ and $\Pi_n$
goes beyond this phenomenon and was exploited in numerous papers on BPRE~\cite{afanasyev2001maximum, afanasyev2013high,bansaye:berestycki,Afanasyev:Geiger:Kersting:Vatutin:2005}.

\medskip

Our aims in this paper are twofold. First,
under mild  conditions, we describe the sharp asymptotic behaviour of large deviations of $Z$:
\begin{equation}\label{eq:intro}
\PP[Z_n > e^{\rho n}]
\end{equation}
as $n\to\8$. This problem got some attention over the past few years resulting in the precise asymptotic behaviour of~\eqref{eq:intro} only in the case of geometric reproduction law obtained by Kozlov~\cite{kozlov:2006, kozlov:2010} and on the logarithmic scale  asymptotic of~\eqref{eq:intro} which was described by Bansaye and Berestycki \cite{bansaye:berestycki}, Bansaye and B\"oinghoff \cite{Bansaye:Boinghoff:2011},  B\"oinghoff and Kersting \cite{Boinghoff:Kersting:2010}.
Recently Grama, Liu and Miqueu \cite{grama:liu:miqueu} proved precise large deviation in the sublinear regime.
In the present article we
prove that the precise asymptotic behaviour of \eqref{eq:intro} takes the form (see Theorem \ref{thm:mthm1} below)\footnote{	Here and in what follows, we  write $f(n) \sim g(n)$ for two functions $f$ and $g$ if $f(n)/g(n) \to 1$ as $n\to\8$. }
$$
	\P\big[ Z_n > e^{\rho n} \big] \sim \frac{\cC_1(\rho)}{\sqrt{n}} e^{-  I(\rho) n},
$$	
for explicitly given rate function $I(\rho)$ and for $\rho> \E[\log A]$ in the supercritical case and $\rho> 0$ in the remaining cases.
 Note that, up to a multiplicative constant, the  probability  
$\P\big[ Z_n > e^{\rho n} \big]$ possesses the same asymptotics as
$\P\big[ \Pi_n > e^{\rho n} \big]$, as seen from result of Bahadur  and  Ranga Rao \cite{bahadur:rao} or Petrov  \cite{petrov1965probabilities}. \medskip

In the second part of the paper we assume that the branching process is subcritical, i.e. $\E [\log A] < 0$. Then
 the probability of survival up to time $n$, i.e. $\P[Z_n>0]$, decays exponentially fast and the exact asymptotic behaviour was given by
Geiger,  Kersting and Vatutin \cite{geiger2003limit} (see also \cite{Birkner:Geiger:Kersting:2005}). Although the process usually dies out relatively quickly, the size of the population can be large. Thus, the above results still provide description of asymptotics of large deviations of $Z_n$. However,
further investigation of our techniques reveals that apart from knowing the probability of large deviation, we can also describe
the  exceedance time (or the first passage time) for $Z$ defined via
\begin{equation*}
	T_t^{Z} = \inf \{ n \geq 0 \: | \: Z_n>t  \}.
\end{equation*}
The first passage time $T_t^Z$ was considered in a series 
of papers by Afanasyev (see e.g. \cite{afanasyev2001maximum,afanasyev2013high,afanaseev:2014}) under the Cram\'er condition
\begin{equation}\label{eq:cramer}
	\E \big[ A^{\alpha_0} \big] = 1
\end{equation}
for some positive constant $\alpha_0$. Then as shown by Afanasyev~\cite{afanasyev2001maximum}, one has  as $t \to \infty$
\begin{equation}\label{eq:afanasyev1}
	\PP\left[ T_t^{Z} <\infty\right] = \PP\left[ \sup_{n \geq 0} Z_n > t\right] \sim c t^{-\alpha_0},
\end{equation}
for some positive constant $c$. Moreover, the asymptotic behaviour of $T_t^Z$ conditioned on $T_t^{Z} < \infty$ is
also understood to some extent, since recently Afanasyev~\cite{afanasyev2013high} proved the law of large numbers
\begin{equation}\label{eq:afanasyev2}
	\left.\frac{T_t^Z}{\log t} \: \right| \: T_{t}^Z <\infty \ \stackrel{\PP}\longrightarrow \ \frac{1}{\rho_0},
\end{equation}
and the corresponding central limit theorem \cite{afanaseev:2014}
$$
\frac{T^Z_t - \log t/ \rho_0}{\sigma_0 \rho_0^{-3/2} \sqrt{\log t}} \ \Big| \ T_t^Z <\8 \stackrel{d}\longrightarrow \mathcal{N}(0,1),
$$
where
$\rho_0 = \EE[(\log A)A^{\alpha_0}]$, $\sigma_0 = \EE[(\log A )^2 A^{\alpha_0}]$,  and $\stackrel{\PP}\longrightarrow$ (resp. $\stackrel{d}\longrightarrow$) denotes convergence in probability
(resp. in distribution).

We study the corresponding precise large deviations of the first passage time and establish asymptotics  (see Theorem \ref{thm:mthm2})
$$ \P\bigg[ \frac{T_t^Z}{\log t} < \frac 1{\rho}
\bigg] \sim \frac{\cC_2(\rho)}{\sqrt{\log t}} t^{-I(\rho)}\qquad \mbox{ for } \rho > \rho_0
$$
 and $$ \P\bigg[ \frac{T_t^Z}{\log t} > \frac 1{\rho}
\bigg] \sim \frac{\cC_3(\rho)}{\sqrt{\log t}} t^{-I(\rho)}\qquad \mbox{ for } \rho < \rho_0
$$
for some constants $\cC_2(\rho)$, $\cC_3(\rho)$. In fact we
 describe the probability that $Z$ exceeds some
threshold $t$  precisely at some given moment, that is we  show (see Theorem \ref{thm:mthm2})
\begin{equation*}
	\PP\left[T_t^{Z} = \left\lfloor \frac{\log t}{\rho} \right\rfloor\right] \sim \frac{\cC_4(\rho)}{\sqrt{\log t }} t^{- I(\rho)},
\end{equation*}
for some  constant $\cC_4(\rho)$.  As one may expect
from~\eqref{eq:afanasyev1} and \eqref{eq:afanasyev2}, $I(\rho)$ attains its smallest
value at $\rho_0$ which is
$I(\rho_0) = \alpha_0$.

 \medskip

 The next  process of our interest is $W=\{W_n\}_{n\geq 0}$ describing the total population size up to time $n$:
$$
	W_n = \sum_{k=0}^n Z_k.
$$
In Theorem \ref{thm:mthmld} we state the  large deviations results for $W_n$.
  We also investigate the corresponding first passage time, that is
\begin{equation*}
	T_t^{W} = \inf \{ n \geq 0 \: | \: W_n>t  \}
\end{equation*}
as $t \to \infty$. Then, by the arguments presented by Kesten et al.~\cite{kesten1975limit} and Afanasyev~\cite{afanasyev2001maximum}
\begin{equation}
\label{eq:af-kes}
	\PP\left[ T_t^{W} <\infty\right] = \PP\left[ \sum_{n = 0}^{\infty} Z_n > t\right] \sim c t^{-\alpha_0}.
\end{equation}
After comparing~\eqref{eq:afanasyev1} and~\eqref{eq:af-kes} one may expect that $T_t^W$ possesses the same normalisation for the law of large numbers and central limit theorem as $T_t^Z$. This is indeed the case and in  Theorem~\ref{thm:mthmclt} we establish the conditional limit theorems for $T_t^W$
as well as a lower deviations result.

\medskip

The paper is organized  as follows. In Section~\ref{sec:results} we present  a precise statements of our results.
In Section~\ref{sec:prel} we provide preliminaries used in all the proofs for the branching process $Z$  and we present proof of Theorem \ref{thm:mthm1}.
In Sections~\ref{sec:bp} and \ref{sec:total} we prove our main results for the subcritical case.
We will denote the constants by $c_i$, $i \in \N$ and if a constant is not of our interest, it will be denoted by a generic $c$, which may change from one line to the other. Constants appearing in our claims will be denoted by $\cC_i$ with $i \in \N$.

\section{Main Results}\label{sec:results}
\subsection{Definitions and assumptions}
We denote by $P_0$ a given law  on $\mathcal M$, the set of all probability measures on $\N$.  Then the probability measure  $P = P_0^{\otimes \N}$ on ${\mathcal M}^{\otimes \N}$ defines the law of the environment ${\mathcal Q}$.
Let $(\Gamma, \mathcal{G}) = (\NN^\NN, \mathcal{B}or(\NN^\NN))$ be the canonical measurable space on which, given the environment ${\mathcal Q}$,  the BPRE $Z$ is defined and denote by $\PP_{\mathcal{Q}}$ the corresponding probability measure.
Then $(\Gamma \times {\mathcal M}^{\otimes \N}, \P)$, for $\P = \P_{\mathcal Q}\otimes P$, is the total probability space considered below. We will occasionally write $\PP[ \cdot \: | \: \mathcal{Q}] = \PP_{\mathcal{Q}}[\cdot]$.

As mentioned in the previous section, the multiplicative random walk $\{\Pi_n\}_{n \geq 0}$ plays a crucial role in our analysis, for
this reason denote
$$
	\lambda(\alpha) = \EE[A^{\alpha}]\quad \mbox{ and } \quad
	\Lambda(\alpha) = \log \lambda(\alpha) = \log \EE[A^{\alpha}].
$$
Put   $\alpha_{\infty} = \sup\{ \alpha>0 \: | \: \lambda(\alpha) <\infty \}$ and 	$\alpha_{min} =  {\rm arg\,min}_{\alpha\ge 0}\: \lambda(\alpha)$. Then the domain of $\lambda$ and $\Lambda$ is $[0,\a_\8)$
and  both functions are smooth and convex in $(0, \alpha_\infty)$.
Denote by  $\rho(\alpha) = \Lambda'(\alpha)$ and  			
	$\sigma(\alpha) = \sqrt{\Lambda ''(\alpha)}$ standard parameters related to the function $\Lambda$.
Let $\rho_\8 = \sup\{\rho(\a)| \a<\a_\8\}$.
 Recall that the convex conjugate (or the Fenchel-Legendre transform) of $\Lambda$ is defined by
$
	\Lambda^*(x) = \sup_{s\in \R}\{sx - \Lambda(s)\}$, for  $x\in\R$.
This rate function appears in the study of large deviations problems for random walks (see e.g.
Dembo   and   Zeitouni \cite{DZ}).
 A straightforward calculation yields $ \Lambda^*(\rho) = \alpha \rho - \Lambda(\alpha)$ for $\rho = \rho(\alpha)$.

Lastly,   we  assume throughout the paper that
\begin{equation}\label{eq:stand25}
\mbox{$A>0$	a.s. and the law of  $\log A$ is non-lattice,}
\end{equation}
i.e. $\log A$ is not supported on any of the sets $a{\mathbb Z}+b$ for $a>0$ and $b\in \R$.

\subsection{Large deviations of $Z$}
First we state large deviations principle for the BPRE $Z$.

\begin{thm}\label{thm:mthm1}
 Assume that \eqref{eq:stand25} holds and fix $\alpha \in (0,\alpha_\infty)$ such that $\rho = \rho(\alpha)>0$. Assume moreover that
\begin{itemize}
  \item[(H1)] if $\alpha < 1$,  then $\E [A_0^{\alpha-1}Z_1 \log^+ Z_1]<\infty$ and
  \begin{equation}\label{eq:nd1}
  \E\Big[ \E\big[ Z_1^{\frac{\a}{\a -\delta_\a}}\big| {\mathcal Q}\big]^{{\alpha-\delta_\a}}\Big] <\infty
  \end{equation} for some $\delta_\a \in (0,\alpha)$.
  \item[(H2)]  if $\alpha = 1$, then $\E [Z_1^{1+\delta_1}] <\infty$ for some $\delta_1>0$;
  \item[(H3)]  if $\alpha > 1$, then $\lambda(\alpha)>\lambda(1)$ and $\E [Z_1^{\alpha+\delta_\a}]<\infty$ for some $\delta_\a>0$.
\end{itemize}
 Then
	$$
		\P\big[ Z_n  >  e^{\rho n} \big] \sim \frac{\cC_1(\rho)}{\sqrt n}  e^{-n\Lambda^*(\rho)}
\qquad \mbox{as } n\to\8,
	$$
	where the constant $\cC_1(\rho)$ can be represented by
	\begin{equation}\label{eq:s12}
		\cC_1(\rho) = \frac{1}{\alpha\sigma(\alpha)\sqrt{2 \pi}} \lim_{k \to \infty} \frac{\EE Z_k^{\alpha}}{\lambda(\alpha)^k}
	\end{equation}
	and the limit on the right-hand side exists, it is finite and strictly positive.
\end{thm}
This result says that up to a constant, in view of the Bahadur, Ranga Rao and Petrov results (see Lemma \ref{lem: petrov1} below), large deviations of $Z_n$ and $\Pi_n$ coincide.
Note that, since the function $\lambda$ is convex, Theorem  \ref{thm:mthm1} is valid for arbitrary $\alpha < \alpha_\infty$ both in the supercritical and critical case (of course when the required moment conditions are satisfied). Moreover,
if $\alpha \le  \min\{1, \alpha_\infty\}$ and $\rho(\alpha)>0$, both conditions (H1) and (H2) are fulfilled in the subcritical case.
The assumption
$\lambda(\alpha)>\lambda(1)$ for $\alpha>1$  can fail only in the strongly subcritical
case, when $\lambda'(1) = \E[A \log A]<0$.  Then Kozlov \cite{kozlov:2006, kozlov:2010} and B\"oinghoff, Kersting \cite{Boinghoff:Kersting:2010} observed that
the asymptotics of $\P[Z_n > e^{\rho n}]$ and  $\P[\Pi_n > e^{\rho n}]$ differ for small values of $\rho$. In this case also our argument breaks down, see Remark \ref{rem:s1} for more detailed explanations. Nevertheless even in the strongly subcritical case for large parameters $\alpha$ satisfying (H3) the above Theorem is valid.

Finally note that condition \eqref{eq:nd1} for $\a<1$, by the Jensen inequality, is guaranteed by a stronger hypothesis 
$\E[ Z^{1+\delta'_{\lambda}}]$ for $1+\delta'_{\lambda} = \alpha/(\a - \delta_{\lambda})$, which reminds the corresponding assumptions for $\a\ge 1$. 

\subsection{Large deviations of $T_t^Z$}
 Now assume that the BPRE $Z$ is subcritical, i.e.
\begin{equation}\label{eq:h1}
  \E \log A < 0,
\end{equation}
which means the process $Z$ dies out a.s, that is $\PP[\lim_{n\to \infty} Z_n=0]=1$. Take $n$ to depend on $t$ via
\begin{equation}\label{eq:nt}
	n = n(t) = n_t = \left\lfloor \frac{\log t}{\rho}  \right\rfloor
\end{equation}
and denote additionally
\begin{equation*}	
	\Theta (t) = \frac{\log t}{\rho} - \left\lfloor \frac{\log t}{\rho}  \right\rfloor.
\end{equation*}
Under this relation we are able to provide exact asymptotic of $\PP\left[T_t^{Z} = n \right]$. Recall that $\rho_0 = \E[A^{\alpha_0}\log A] = \lambda'(\alpha_0)$.

\begin{thm}\label{thm:mthm2} Let the assumptions of Theorem~\ref{thm:mthm1} be in force.
Assume  \eqref{eq:h1} and  that
	$n$ and $t$ are related via~\eqref{eq:nt}.
Then, there are constants $\cC_2(\rho)$ and $\cC_3(\rho)$ such that
 for $\rho > \rho_0$
	\begin{equation}\label{eq:2.4down}
			\PP\left[ n\le T_t^{Z} <\infty \right] \sim
			\cC_2(\rho) \lambda(\alpha)^{-\Theta (t)} \;
			\frac{t^{-\Lambda^*(\rho)/\rho}}{\sqrt{\log t}} \qquad \mbox{ as } t\to\8
	\end{equation}
and for $\rho<\rho_0$
	\begin{equation}\label{eq:2.4up}
			\PP\left[T_t^{Z} \ge n  \right] \sim
			\cC_3(\rho) \lambda(\alpha)^{-\Theta (t)} \;
			\frac{t^{-\Lambda^*(\rho)/\rho}}{\sqrt{\log t}} \qquad \mbox{ as } t\to\8.
	\end{equation}
Moreover for some constant $\cC_4(\rho)$
	\begin{equation}\label{eq:mthm2point}
			\PP\left[T_t^{Z} = n  \right] \sim
			\cC_4(\rho) \lambda(\alpha)^{-\Theta (t)} \;
			\frac{t^{-\Lambda^*(\rho)/\rho}}{\sqrt{\log t}} \qquad \mbox{ as } t\to\8.
	\end{equation}
\end{thm}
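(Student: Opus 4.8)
The plan is to reduce all three statements to Theorem \ref{thm:mthm1} together with a careful analysis of the overshoot/undershoot behaviour of the process at the threshold. Observe first that $\{T_t^Z \le n\} = \{\max_{k \le n} Z_k > t\}$ and $\{T_t^Z = n\} = \{\max_{k < n} Z_k \le t,\ Z_n > t\}$, so the whole problem is about the running maximum $M_n := \max_{k\le n} Z_k$ of the BPRE. The key heuristic is that, in the large deviation regime we are in ($\rho>0$, $\alpha>1$), a large value of $Z_n$ is built up gradually along a typical path of the tilted walk, so the running maximum is attained at (or very near) time $n$, and moreover a single ''jump'' does not carry the process far above the threshold: the overshoot $Z_{T_t^Z}/t$ conditioned on $T_t^Z<\infty$ should converge to a nondegenerate law governed by the reproduction mechanism and the tilted environment. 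Quantifying this is exactly what turns the crude bound $\PP[Z_n > t] \sim \cC_1(\rho) n^{-1/2} t^{-\overline\alpha}\lambda(\alpha)^{-\Theta(t)}$ (which is Theorem \ref{thm:mthm1} with $e^{\rho n}$ replaced by $t$, absorbing the fractional part $\Theta(t)$ into the prefactor via $e^{\rho n} = t\lambda(\alpha)^{-\Theta(t)\cdot(\rho/\Lambda(\alpha))}$ — here one uses $\rho\overline\alpha = \Lambda^*(\rho)$ and $\Lambda(\alpha) = \alpha\rho - \Lambda^*(\rho)$ to see the exponent) into the sharper statements involving $\{T_t^Z \le n\}$ and $\{T_t^Z = n\}$.

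First I would establish the upper-bound side. For $\rho > \rho_0$ (equivalently $\overline\alpha < \alpha_0$, since $\overline\alpha = \alpha - \Lambda(\alpha)/\rho$ decreases then increases and $\overline\alpha(\rho_0)$... more precisely $\rho>\rho_0$ is the regime where the optimal time to exceed $t$ is \emph{before} $n_t$), the dominant contribution to $\{T_t^Z \le n\}$ should come from $k$ close to $n$. I would write $\PP[T_t^Z \le n] = \sum_{k=0}^n \PP[T_t^Z = k]$ and bound each term by $\PP[Z_k > t]$; summing the estimates from Theorem \ref{thm:mthm1} (applied at each $k$, with the threshold $t$ fixed), the geometric-type decay in $k$ away from $n$ coming from the factor $t^{-\overline\alpha_k}$ where $\overline\alpha_k$ is the ''effective'' exponent forcing $Z_k>t$ — which is minimized near $k=n_t$ when $\rho>\rho_0$ — yields a convergent sum, hence $\PP[T_t^Z\le n] \asymp \PP[Z_n > t]$. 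For the matching lower bound I would need $\PP[T_t^Z\le n] \ge \PP[Z_n > t]$ trivially, but to get the \emph{constant} right I must show that $\PP[T_t^Z\le n]/\PP[Z_n>t] \to$ some explicit limit $>1$ (or $=1$): this is where the overshoot analysis enters. The cleanest route is a renewal-type decomposition: condition on $T_t^Z = k$ and on the environment, and use that from a large value $Z_k$ the process continues to grow like $\Pi$, so $Z_n$ for $n>k$ is again large with probability bounded below; combined with the harmonic-function / martingale structure $M_k^{(\alpha)} := Z_k^\alpha/\lambda(\alpha)^k$ (whose convergence is precisely the content of the limit $\lim_k \EE Z_k^\alpha/\lambda(\alpha)^k$ in Theorem \ref{thm:mthm1}), one identifies $\cC_2(\rho)$.

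For the lower-tail regime $\rho < \rho_0$, i.e. $\PP[T_t^Z \ge n]$: here $\{T_t^Z \ge n\} \supset \{Z_n > t\}$ up to the event $\{T_t^Z\le n-1\}$, and since now the optimal exceedance time is \emph{after} $n_t$, the running maximum before time $n$ is typically well below $t$, so $\PP[T_t^Z \le n-1]$ is of \emph{smaller} exponential order and $\PP[T_t^Z\ge n] = \PP[\max_{k\ge n} Z_k > t]$ is dominated by times $k$ near $n$ from above; the same summation argument (now forward in $k$) plus the strong Markov property at time $n$ and a uniform-in-starting-value version of Theorem \ref{thm:mthm1} (using $\EE[Z_1^{\alpha+\eps}]<\infty$ to control the initial condition $Z_n$) gives the asymptotic with constant $\cC_3(\rho)$. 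Finally, \eqref{eq:mthm2point} follows by taking the difference: $\PP[T_t^Z = n] = \PP[T_t^Z \le n] - \PP[T_t^Z \le n-1]$ in the first regime (and analogously $\PP[T_t^Z\ge n]-\PP[T_t^Z\ge n+1]$ in the second), and since both terms have the same leading order one must track the $\lambda(\alpha)^{-\Theta(t)}$-dependence carefully — replacing $n$ by $n-1$ changes $\Theta(t)$ by the constant $\rho/\Lambda(\alpha)$ modulo $1$... more precisely it shifts which fractional part appears — and read off $\cC_4(\rho)$ from the resulting telescoping constant; at $\rho=\rho_0$ one checks $\overline\alpha = \alpha_0$ recovering \eqref{eq:af-kes}.

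The main obstacle I expect is the lower bound with the \emph{sharp constant}: transferring from ``$Z_n > t$'' to ``$\sup_{k\le n} Z_k > t$'' (or $\sup_{k\ge n}$) without losing or gaining an unknown multiplicative factor. This requires (i) a genuine overshoot result — that $Z_{T_t^Z}/t$ conditioned on $T_t^Z<\infty$ converges in distribution, which likely demands the change of measure to the $\alpha$-tilted environment where $\Pi_n$ becomes supercritical and a Markov renewal theorem applies, using the non-lattice assumption \eqref{eq:stand25} — and (ii) control of the event that the maximum is attained far from $n_t$, which needs the uniform large-deviation upper bounds for $\PP[Z_k > t]$ over all $k$, not just $k=n_t$. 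The moment assumption $\EE[Z_1^{\alpha+\eps}]<\infty$ (strictly stronger than in Theorem \ref{thm:mthm1}) is presumably exactly what is needed to make these uniform estimates and the passage through the strong Markov property go through.
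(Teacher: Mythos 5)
Your high-level picture is right: the problem is about the running maximum, the pointwise event $\{T_t^Z = n\}$ is the central object, the sign of $\Lambda(\alpha)$ (i.e.\ $\rho\gtrless\rho_0$) drives the backward vs.\ forward geometric summation, the $\Theta$-dependence has to be tracked, and the extra moment $\E[Z_1^{\alpha+\eps}]<\infty$ buys uniformity. But your proposed order of attack is the reverse of the paper's, and that reversal is where the gaps sit.

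The paper proves the \emph{pointwise} asymptotics \eqref{eq:mthm2point} first, in the uniform form
\[
\PP\Big[\max_{j\le n-1}Z_j\le a e^{\rho n},\;Z_n>a e^{\rho n}\Big]\sim \cC_4(\rho)\,\frac{a^{-\alpha}e^{-\overline\alpha\rho n}}{\sqrt{\rho n}}\quad\text{uniformly for }a\in[n^{-K},n^K],
\]
and then obtains \eqref{eq:2.4down}, \eqref{eq:2.4up} by summing this over $j\le D\log n$ (a convergent geometric series) and showing the tail is negligible via the moment bounds (Corollary~\ref{cor:nbar} and its analogue for $W_{\overline n,\infty}$). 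You propose the opposite: prove the cumulative estimate first and then difference. Two concrete obstructions. First, your route to $\PP[T_t^Z\le n]$ by summing $\PP[Z_k>t]$ overcounts — the events $\{Z_k>t\}$ are far from disjoint along a successful path, so this only gives an upper bound, and the sharp constant is exactly the overcounting correction; you wave at a renewal/overshoot argument but don't supply it, and it \emph{is} the hard part. Second, even granting the cumulative asymptotics, differencing to get \eqref{eq:mthm2point} requires the same uniformity in the threshold that the paper establishes (you need $\PP[T_t^Z\le n-1]$ with $t$ fixed, which is outside the stated relation $n=\lfloor\log t/\rho\rfloor$), so the uniform pointwise estimate is unavoidable anyway — at which point the cumulative asymptotics follows by summation and the differencing detour is wasted.

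On the mechanism: you propose a change of measure and a Markov renewal theorem for $T_t^Z$. But $T_t^Z$ is the first passage time of the \emph{branching process} $Z$, not of a random walk; the ladder structure mixes environment fluctuations with branching noise, and it is not at all clear that a renewal theorem applies directly to $Z$. The paper's workaround is precisely to decouple these: approximate $Z_j$ by $X_j = Z_{n'}\Pi_{n',j}$ for $n'=K\log n$ (Lemma~\ref{lem:approx}, using Corollary~\ref{cor:3}), then treat the first passage of $\{X_j\}$ — a product-type walk with a decoupled random prefactor $Z_{n'}$ — via a uniform Petrov theorem (Lemma~\ref{lem: petrov1}, Lemma~\ref{lem:branchPetrov}) and the Buraczewski--Ma\'slanka-style two-sided bound/limit machinery (Lemma~\ref{lem:burmas1}, Proposition~\ref{prop:weakasymp}, Lemma~\ref{lem:concl}). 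The constant $\cC_4(\rho)$ emerges from $\lim_{L\to\infty} e^{-L\Lambda(\alpha)}\E[((\Pi'_L)^\alpha-(M'_{L-1})^\alpha)_+]$ rather than from a renewal-theoretic stationary overshoot law. So while your intuition about fluctuation at the first passage is on target, the technical route you sketch does not obviously close, and the order in which you propose to prove the three statements would force you to re-derive exactly the uniform pointwise estimate the paper starts from.
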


Up to our best knowledge precise large deviations of $T_t^Z$ of the form~\eqref{eq:mthm2point} were not studied in the literature.
Result in the same vein but for the sequence of products $\{\Pi_n\}_{n \geq 0}$ was recently obtained by Buraczewski and Ma\'slanka~\cite{buraczewski2016precise} incorporating techniques used previously in work of Buraczewski,
Damek and Zienkiewicz~\cite{buraczewski2015pointwise} in the context of perpetuities (see \eqref{eq:wt1} for an example  of a perpetuity). In our case large  deviations for $Z$ are caused by deviations
for the environment, that is the multiplicative random walk $\{ \Pi_n \}_{n \geq 0}$.

\subsection{Large deviations of $W$ and $T_t^W$}
Turning our attention to the total population size, we will approximate $W_n$ by its conditional mean, that is
\begin{equation}\label{eq:wt1}
	R_{n}=\EE[W_n \: | \: \mathcal{Q}] = \sum_{k=0}^n\prod_{j=0}^{k-1}A_j = \sum_{k=0}^n \Pi_{k}.
\end{equation}
Note that $ \{ R_n \}_{n \geq 0}$ forms so-called perpetuity sequence and that its structure is more complicated than the one of $\{\Pi_n\}_{n \geq 0}$. Working with perpetuities requires usually more advanced techniques and sometimes this process reveals some new properties (we refer to \cite{buraczewski:damek:mikosch} for more details). Nevertheless, in many aspects the asymptotic behaviour
of $\{R_n\}_{n \geq 0}$ is similar to the one of $\{\Pi_n\}_{n \geq 0}$. Thus our main results concerning the total population resemble those stated above, but are slightly weaker (in these settings we were not able to provide sharp pointwise estimates as in \eqref{eq:mthm2point}).

We assume below the Cram\'er condition \eqref{eq:cramer}  and formulate large deviations for $\a>\a_0$ (Theorem \ref{thm:mthmld}),
 the law of large numbers  and the central limit theorem (Theorem \ref{thm:mthmclt}).

\begin{thm}\label{thm:mthmld}
	Under the assumptions of Theorem~\ref{thm:mthm1} fix $\rho = \rho(\a)$ for some $\a\in (\a_0,\a_\8)$ and assume additionally \eqref{eq:h1}. Then, there exists a constant
	$\cC_5(\rho)$ such that
	$$
		\P\big[ W_n > e^{\rho n} \big]\sim \frac{\cC_5(\rho)}{\sqrt n} e^{-n\Lambda^*(\rho) }\quad \mbox{as } n\to\8
	$$
	and
	$$
		\P\big[ W_{\lfloor n- D\log n \rfloor}  > e^{\rho n} \big]=  o\bigg(\frac{1}{\sqrt n} e^{-n\Lambda^*(\rho) } \bigg)\quad
		\mbox{as } n\to\8
	$$
	for any $D > \Lambda(\a)^{-1} (2\a +7)$. In particular if $n$ and $t$ are related via \eqref{eq:nt}, then
	\begin{equation*}
		\PP\left[T_t^{W} \leq n \right] \sim \cC_5(\rho) \lambda(\alpha)^{-\Theta(t)} \frac{t^{-   \Lambda^*(\rho)/\rho  
}}{\sqrt{\log t}},
	\end{equation*}
	and		
	\begin{equation*}
		\PP\left[T_t^{W} \leq n  - D \log n \right] =o \bigg( \frac{t^{- \Lambda^*(\rho)/\rho
}}{\sqrt{\log t} } \bigg).
	\end{equation*}
\end{thm}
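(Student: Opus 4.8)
The plan is to mimic the structure of the proofs of Theorems \ref{thm:mthm1} and \ref{thm:mthm2}, replacing the multiplicative walk $\{\Pi_n\}$ by the perpetuity $\{R_n\}$ as the driving object, and controlling the discrepancy between $W_n$ and its conditional mean $R_n$. First I would set up the change-of-measure (tilting) at the parameter $\a$: under the tilted environment measure with density proportional to $A^\a/\lambda(\a)$ per coordinate, the walk $S_n = \log \Pi_n$ has positive drift $\rho = \rho(\a)$, so the event $\{W_n > e^{\rho n}\}$ becomes a central (typical) event. The key probabilistic input is that on the event $\{S_n \approx \rho n\}$ the normalized population $Z_n / \Pi_{n-1}$ (and hence $W_n/R_n$) converges, after the tilting, to a nondegenerate limit whose $\a$-th moment behaves like $\lim_k \E Z_k^\a / \lambda(\a)^k$ — exactly the constant appearing in $\cC_1(\rho)$ — which is where the hypotheses $\a>1$, $\lambda(\a)>\lambda(1)$ and $\E[Z_1^{\a+\eps}]<\8$ enter. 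Since $Z_k \le W_k$ and $R_n = \sum_{k\le n}\Pi_{k-1}$ is, up to the exponential factor $\Pi_{n-1}$, a tight random multiple of $\Pi_{n-1}$ (because the drift is positive, the sum is dominated by its last term), the perpetuity contributes only an extra tight random factor and does not change the exponential rate; this is why $R_n$ and $\Pi_n$ share the same large-deviation asymptotics and why the answer has the same form as in Theorem \ref{thm:mthm1}, merely with a different constant $\cC_5(\rho)$.

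Concretely, I would write, for a fixed $\a$, $\P[W_n > e^{\rho n}] = \lambda(\a)^n\, \widetilde\E\big[ A_0^{-\a}\cdots A_{n-1}^{-\a} \mathbf 1_{\{W_n > e^{\rho n}\}} \big]$, decompose $e^{-\a S_n} = e^{-\a \rho n} e^{-\a(S_n - \rho n)}$, and use the local central limit theorem for $S_n$ under the tilted measure (this is where the non-lattice assumption \eqref{eq:stand25} and $\sigma(\a)^2 = \Lambda''(\a) \in (0,\8)$ are needed) to extract the $1/\sqrt n$ factor with constant $1/(\a\sigma(\a)\sqrt{2\pi})$. The remaining expectation, conditioned on $S_n - \rho n = O(\sqrt n)$, converges to the expectation of the $\a$-th power of the limit of $W_n/R_n$ times a deterministic factor coming from $R_n/\Pi_{n-1}$; combining the two gives $\cC_5(\rho)$. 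For the ``premature exceedance'' statement $\P[W_{\lfloor n - D\log n\rfloor} > e^{\rho n}] = o(n^{-1/2} e^{-\rho\overline\a n})$, I would again tilt but now the target $e^{\rho n}$ is $e^{\rho (n - m)} = n^{\rho D}$ above the typical value at time $m = \lfloor n - D\log n\rfloor$, so a union bound over the at most $m$ generations together with the polynomial moment bound $\widetilde\E[(W_m/R_m)^{\a+\eps}] < \8$ forces an extra factor $n \cdot n^{-(\a+\eps)D\cdot(\text{something})}$; choosing $D > \Lambda(\a)^{-1}(2\a+3)$ makes this $o(n^{-1/2})$ relative to the main term. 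The passage to the first-passage-time statements is then purely deterministic: $\{T_t^W \le n\} = \{W_n > t\}$, and substituting $t = e^{\rho n_t}$ with $n_t, \Theta(t)$ as in \eqref{eq:nt} turns $e^{-\rho\overline\a n}$ into $\lambda(\a)^{-\Theta(t)} t^{-\overline\a}$, since $\rho\overline\a = \Lambda^*(\rho) = \a\rho - \Lambda(\a)$ and $e^{-\rho\overline\a n_t} = e^{-\rho\overline\a(\log t/\rho - \Theta(t))} = t^{-\overline\a}\lambda(\a)^{\Theta(t)\cdot(\overline\a\rho/\Lambda(\a))}$; one checks $\overline\a\rho/\Lambda(\a) = (\a\rho - \Lambda(\a))/\Lambda(\a)$... so in fact the clean identity $e^{-\rho\overline\a n_t} = \lambda(\a)^{-\Theta(t)} t^{-\overline\a}$ follows directly from $\rho n_t = \log t - \rho\Theta(t)$ and $\Lambda(\a) = \a\rho - \rho\overline\a$.

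The main obstacle I expect is the moment control of the perpetuity-type quantity $W_n/R_n$ under the tilted measure: one must show both that $\widetilde\E[(W_n/R_n)^{\a}]$ converges (not merely stays bounded) to identify $\cC_5(\rho)$, and that $\widetilde\E[(W_n/R_n)^{\a+\eps}]$ is bounded uniformly in $n$ to run the union bound for the premature-exceedance estimate. Unlike $Z_n/\Pi_{n-1}$, which is a martingale in $n$ under the annealed measure, $W_n/R_n$ is not, and $R_n$ itself is random and correlated with $S_n$; the remedy is to peel off the dominant last summand $\Pi_{n-1}$ of $R_n$, write $R_n = \Pi_{n-1}(1 + A_{n-1}^{-1} + A_{n-1}^{-1}A_{n-2}^{-1} + \cdots)$, note that under the $\a$-tilting the reversed partial products have negative drift so the bracketed series converges a.s.\ with finite $(\a+\eps)$-th moment (using $\lambda(\a)>\lambda(1)$ to dominate the relevant reversed-walk exponent), and then handle the branching contribution $W_n / R_n$ via the same $L^{\a+\eps}$ estimate already used for $Z$ in Section \ref{sec:bp} (this is precisely where $\E[Z_1^{\a+\eps}]<\8$ is used, combined with the Topchii–Vatutin / Biggins-type $L^p$ bound for branching processes in random environment). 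Once these two moment facts are in place, the rest is a routine local-CLT argument identical in form to Theorems \ref{thm:mthm1}–\ref{thm:mthm2}.
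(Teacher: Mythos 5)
Your approach is genuinely different from the paper's. The paper never tilts: it reduces the problem, via Corollary~\ref{cor:nbar} and Lemma~\ref{lem:5.7}, to the probability $\PP\big[Z_{n'}\Pi_{n',\overline n-1}(1+R_{\overline n,n})>t\big]$ with $n'=K\log n$ and $\overline n=n-D\log n$; the prefactor $H=Z_{n'}(1+R_{\overline n,n})$ is \emph{literally independent} of the middle product $\Pi_{n',\overline n-1}$, and one then invokes the decoupled Petrov result Lemma~\ref{lem:branchPetrov}, which already carries the $n^{-1/2}$ and delivers $\cC_5(\rho)=c_Rc_Z/(\a\sigma(\a)\sqrt{2\pi})$ via Proposition~\ref{prop:momzn} and Lemma~\ref{lem:momrn}. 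Your change-of-measure route (exponential tilting plus local CLT for $S_n$) is also viable and is arguably more transparent for identifying the constant, since under the tilted law the process is supercritical and $Z_n/\Pi_{n-1}$ has a genuine martingale limit. What the paper's route buys is that the independence needed to pass from the local CLT for $S_n$ to the asserted $\a$-th-moment constant is built into the construction, rather than needing a separate argument.

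That independence is exactly where your sketch has a gap. You write that ``the remaining expectation, conditioned on $S_n-\rho n=O(\sqrt n)$, converges to the expectation of the $\a$-th power of the limit of $W_n/R_n$ times a deterministic factor coming from $R_n/\Pi_{n-1}$,'' but the random prefactor $W_n/\Pi_{n-1}=(W_n/R_n)(R_n/\Pi_{n-1})$ is a functional of the \emph{entire} environment and is therefore correlated with $S_n-\rho n$; conditioning on the latter a priori shifts the former. One must show that, after cutting off at $n'$ (head: branching martingale) and $\overline n$ (tail: reverse perpetuity $1+A_{n-1}^{-1}+A_{n-1}^{-1}A_{n-2}^{-1}+\cdots$), the prefactor becomes independent of the middle block so the conditional expectation factorizes asymptotically --- this is what Lemma~\ref{lem:5.7} and the structure of Lemma~\ref{lem:branchPetrov} supply in the paper, and your proof needs an equivalent step rather than a bare assertion. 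Three smaller points. First, $R_n/\Pi_{n-1}$ does not have a \emph{deterministic} limit; it is a random reverse-perpetuity limit whose $\a$-th moment (times the $\a$-th moment of the head limit) gives $\cC_5$. Second, for the $\a$-th moment of the reverse perpetuity to be finite you need $\lambda(\a)>\lambda(0)=1$, i.e.\ $\a>\a_0$, not $\lambda(\a)>\lambda(1)$; the latter is the hypothesis driving Proposition~\ref{prop:momzn} for $c_Z$. Third, your ``clean identity'' $e^{-\rho\overline\a n_t}=\lambda(\a)^{-\Theta(t)}t^{-\overline\a}$ is off: one actually has $e^{-\rho\overline\a n_t}=t^{-\overline\a}e^{\a\rho\Theta(t)}\lambda(\a)^{-\Theta(t)}$, and the extra factor $e^{\a\rho\Theta(t)}$ is cancelled only because the threshold is $t=e^{\rho(n_t+\Theta(t))}$ rather than $e^{\rho n_t}$, so the perturbed version of the large deviations estimate (with $\delta_n=\rho\Theta(t)/n$) contributes a compensating $e^{-\a\rho\Theta(t)}$; this bookkeeping should be made explicit.
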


From our approach and results stated so far, we know how the deviations of $Z$ and $W$ can occur and more importantly, what is
the most probable moment of such deviation. With that knowledge, we are able to derive the corresponding law of large numbers and central limit theorem for $T_t^W$.

\begin{thm}\label{thm:mthmclt}
	Suppose the assumptions of Theorem~\ref{thm:mthm1} are in force. Fix $\rho = \rho(\a)$ for some $\a\in (\a_0,\a_\8)$ and assume additionally \eqref{eq:h1}. Then
	$$
		\frac{T_t^W}{\log t}\ \Big| \ T_t^W <\8 \stackrel{\PP}\longrightarrow\frac 1{\rho_0}
	$$
	and
	$$
		\frac{T_t^W - \log t/\rho_0}{\sigma_0 \rho_0^{-\frac 32}\sqrt{\log t}}\ \Big| \ T_t^W <\8
		\stackrel{d}\longrightarrow \mathcal{N}(0,1),
	$$
	where $\rho_0 = \rho(\alpha_0)$ and $\sigma_0 = \sigma(\alpha_0)$. Moreover
	$$
		\P\big( W_n > e^{\rho_0 n}\big) \sim \cC_6(\rho_0) e^{-\a_0 \rho_0 n}.
	$$
\end{thm}

Observe that the result in Theorem \ref{thm:mthmld} is weaker than those
in Theorems \ref{thm:mthm1} and \ref{thm:mthm2}. However similar situation holds when we compare the results concerning $\{\Pi_n\}_{n \geq 0}$ and perpetuities $\{R_n\}_{n \geq 0}$. Then for $\a<\a_0$ the asymptotic behaviour of the deviation of both processes can be  different, see \cite{buraczewski2014large, buraczewski2015pointwise}.

The same techniques can be used to study BPRE with immigration having applications to random walk in random environment. Similar  scheme allows to describe precise  large deviations for random walks in random environment \cite{buraczewski:dyszewski}.

\section{Auxiliary results and proof of Theorem \ref{thm:mthm1}}\label{sec:prel}
\subsection{Moments of $Z_n$}
We start with a description of asymptotic behavior
of  moments of $Z_n$, as
$n \to \infty$.  The following lemma guarantees that in terms of moment generating functions both processes $Z_n$ and $\Pi_n$ are
asymptotically equivalent and  proves in particular existence of the limit in \eqref{eq:s12}.
\begin{lem}
\label{lem:s1}
If hypotheses of Theorem \ref{thm:mthm1} are satisfied, then the limit
	\begin{equation}\label{eq:momzn}
		c_Z=c_Z(\alpha)= \lim_{n \to \infty}\frac{\EE[Z_n^{\alpha}]}{\lambda(\alpha)^{n}}
	\end{equation}
	exists and $c_Z(\a) \in (0,\infty)$.
\end{lem}
\begin{proof}
If $\alpha=1$, the lemma follows trivially since $\E[Z_n] = \lambda(1)^n$. For $\alpha \not= 1$
the lemma was essentially proved by Huang and Liu (Theorem 1.3 \cite{huang2012moments}). However, they worked  under  different   assumptions (the paper was written only for the supercritical case and the authors assumed e.g. $\P[Z_1=0]=0$). For reader convenience we present below complete argument following \cite{huang2012moments}.

\medskip

Let $U_n = Z_n/\Pi_n$ be the normalized population size. Then the sequence $\{U_n\}_{n\ge 0}$ forms a~martingale both under the quenched probability $\P_{\mathcal{Q}}$ and under the annealed law $\P$ with respect to the filtration ${\mathcal F}_n = \sigma(
Q_j, \xi_k^j, k \in \NN, \: j\le n)$. As a positive martingale $U_n$ converges a.s. to some random variable $U$
such that $\E U\le 1$ by an appeal to Fatou's Lemma. Of course in the critical and the subcritical case, since the population goes extinct, $U = 0$ a.s.  In the supercritical case Tanny \cite{tanny1988necessary} proved that $U$ is nondegenerate if and only if
\begin{equation}\label{eq:s3}
  \E[A_0^{-1} Z_1 \log^+ Z_1] < \infty.
\end{equation}
Since $\lambda(\alpha)<\infty$, we can define a new probability measure on $\mathcal M(\NN)$ viz.
$$
 P_{0,\alpha}(dQ) = \frac{A^{\alpha}P_0(dQ)}{\lambda(\alpha)}, \qquad 	A = \sum_{j=0}^\8 j Q(j).
$$
We consider the new BPRE with respect to the environment distributed according to $P_{\alpha} = P_{0,\alpha}^{\otimes \N}$ and we define
on $(\Gamma \times {\mathcal M}^{\otimes \N} )$
the probability measure
$\P_{\alpha} = \P_{\mathcal{Q}}\otimes P_{\alpha}$. Let $\E_{\alpha}$  denote the corresponding expectation. Observe that
\begin{equation}\label{eq:wt3}
\E_{\alpha}[ U_n^{\alpha}] = \frac{\E [Z_n^{\alpha}]}{\lambda(\alpha)^n}.
\end{equation}
 Thus to ensure \eqref{eq:momzn} it is sufficient to prove
existence of the limit $\lim_{n\to\infty} \E_{\alpha} [ U_n^{\alpha}]$.
Notice that
$$
\E_{\alpha}[ \log A] = \frac{\E [A^{\alpha} \log A]}{\lambda(\alpha)} = \Lambda'(\alpha) = \rho(\alpha) > 0.
$$
Therefore, under $\P_\alpha$ the process $\{Z_n\}_{n\ge 0}$ is supercritical.

\medskip

\noindent
{\sc Case 1. Assume $\alpha <1$.}
Since by (H1)
$$
\E_{\alpha}\big[ A_0^{-1} Z_1 \log^+ {Z_1} \big] = \frac 1{\lambda(\alpha)} \E \big[ A_0^{\alpha-1} Z_1 \log^+ Z_1 \big] <\infty,
$$   appealing to \eqref{eq:s3} and Tanny \cite{tanny1988necessary} we deduce that $\P_{\alpha}[U>0] >0$  and
$U_n$ converges to $U$ in $L^1(d\P_{\alpha})$. Finally
$$
\lim_{n\to\infty} \frac{\E [Z_n^{\alpha}]}{\lambda(\alpha)^n} = \lim_{n\to\infty} \E_{\alpha} [U_n^{\alpha}] = \E_{\alpha}[U^{\alpha} ] >0.
$$

\medskip

\noindent
{\sc Case 2. Assume $\alpha > 1$.} Now we are going to appeal to Guivarc'h and Liu  \cite{guivarc2001proprietes} (Theorem 3), who proved that
\begin{equation}\label{eq:wt2}
0 < \E_{\alpha}U^{\alpha} <\infty \quad \mbox{if and only if} \quad \E_{\alpha}\big[ \big( Z_1/A_0 \big)^\alpha\big] <\infty\ \mbox{ and } \E_{\alpha} [A_0^{1-\alpha}] < 1
\end{equation}
and both statements are equivalent to $U_n \to U$ in $L^{ \alpha}(d\PP_\alpha)$. Since by (H3)
$$
\E_{\alpha}\big[\big( Z_1/A_0 \big)^\alpha\big]  =  \lambda(\alpha)^{-1} \E [Z_1^{\alpha}] <\infty
$$
and
\begin{equation}\label{eq:s45}
\E_{\alpha} [A_0^{1-\alpha}] = \lambda(1)/\lambda(\alpha) < 1,
\end{equation}
the martingale $U_n$ converges to $U$ in $L^\alpha(d\P_{\alpha})$ and combining \eqref{eq:wt3} with \eqref{eq:wt2} we conclude the Lemma.
\end{proof}
\begin{rem} \label{rem:s1}
The above proof explains the role of the hypothesis $\lambda(1) < \lambda(\alpha)$ in the strongly subcritical case, which appears in \eqref{eq:s45}. If this condition is not satisfied in view of the Guivarc'h, Liu result  \cite{guivarc2001proprietes} we are not able to ensure finiteness of $\E_\alpha U^{\alpha}$
and prove \eqref{eq:momzn}.
\end{rem}

To prove our main results we need a stronger version of Lemma \ref{lem:s1}. Below we prove
 locally uniform estimates for moments of $Z_n$ and an auxiliary inequality \eqref{eq:s5}.

\begin{lem}
\label{lem:s4}
 Under  hypotheses  of Theorem \ref{thm:mthm1}  there are $\delta>0$ and $c = c(\alpha,\delta)$ such that
\begin{equation}\label{eq:s4}
\E[Z_n^s] \le \left\{ \begin{array}{cl}  c \lambda(s)^n \qquad \mbox{ for any } s\in [\alpha,\alpha+\delta]  & \mbox{if }\alpha \neq 1, \\
					 cn^8 \lambda(s)^n \qquad \mbox{ for any } s\in [1,1+\delta]  &  \mbox{if } \alpha=1. \end{array}\right.
\end{equation}
Moreover there exists $\gamma<\lambda(\alpha)$ and some constant $c$ such that for any $n\ge 1$
\begin{equation}
\label{eq:s5}
 \E\big[ |Z_n - A_{n-1} Z_{n-1}|^{\alpha}  \big] \le c \gamma^n.
\end{equation}
\end{lem}
\begin{proof}
  \noindent {\sc Step 1. Proof of \eqref{eq:s4} and \eqref{eq:s5} for $\alpha <1$.}
 Take any $s\in [\alpha,1]$. Since the function $x\mapsto x^s$ is concave,	the conditional Jensen inequality entails
\begin{equation}\label{eq:sr4}
\EE[Z_n^s] =  \E \big[ \E[Z_n^s|\mathcal{Q}]\big]
\le \E[ \Pi_n^s] = \lambda(s)^{n},
\end{equation}
which proves  \eqref{eq:s4} with $c=1$ and any $\delta \le 1 - \alpha$.

The second part of the Lemma for $\a<1$ follows essentially from the Marcinkiewicz-Zygmund inequality saying that if
$\{X_i\}_{i\in \N}$ is a sequence of i.i.d. random variables such that $\E X = 0$, $\E|X|^p<\infty$ for some $p\in[1,2]$,
$N$ is a stopping time and $S_n = \sum_{i=1}^n X_i$, then
\begin{equation}\label{eq:sr2}
\E |S_N|^p \le c_p \E|X_1|^p \cdot \E N
 \end{equation} for some universal constant $c_p$ depending only on $p$ (see e.g. Theorem I.5.1 in \cite{gut2009stopped}). We also need
the following decomposition being an immediate consequence of  \eqref{eq:s9}:
	\begin{equation}\label{eq:1}
		Z_n - A_{n-1} Z_{n-1} = \sum_{k=1}^{Z_{n-1}}(\xi^{n-1}_k - A_{n-1}) ,
	\end{equation}
	where $\xi^{n-1}_k - A_{n-1}$, given $\mathcal{Q}$, are iid with zero mean and independent of $Z_{n-1}$.
\newline
 Since $\rho(\alpha)>0$, there exists  $\eps \le \min\{\alpha/2, \delta_\a \}$ for $\delta_\a$ as in condition (H1), such that
$\lambda(\alpha-\eps)< \lambda(\alpha)$.
Applying the conditional Jensen inequality  for the convex function $x\mapsto x^{\frac 1{\alpha - \eps }}$ and next
the Marcinkiewicz-Zygmund inequality \eqref{eq:sr2} with $p = \frac{\a}{\a-\eps}$ 
(thanks to our choice of $\eps$ we have $\frac{\alpha}{\alpha-\eps} < 2$) we can write
\begin{equation}\label{eq:sr5}
\begin{split}
  \E\big[|Z_n - A_{n-1} Z_{n-1}|^{\alpha}\big] &\le  \E\Big[ \E \big[ |Z_n - A_{n-1} Z_{n-1}|^{\frac{\alpha}{\alpha-\eps}} | \mathcal{Q}, \: Z_{n-1}  \big]^{\alpha-\eps} \Big]\\
   &\le c  \E\bigg[ \E\bigg[ \bigg| \sum_{k=1}^{Z_{n-1}}(\xi^{n-1}_k - A_{n-1}) \bigg|^{\frac{\alpha}{\alpha-\eps}} \Big| \mathcal{Q}, \:Z_{n-1} \bigg]^{\alpha-\eps}
   \bigg]\\
&\le c  \E\Big[  Z_{n-1}^{\alpha - \eps}  \E\big[|\xi^{n-1}_1-A_{n-1}|^{\frac{\alpha}{\alpha-\eps}} | \mathcal Q\big]^{\alpha-\eps} \Big]\\
&\le c  \E\big[  Z_{n-1}^{\alpha - \eps}\big] \cdot
  \E\Big[ \E\big[ |\xi_1^{n-1}- A_{n-1}|^{\frac{\a}{\a -\eps}}\big| {\mathcal Q}\big]^{{\alpha-\eps}}\Big]
\end{split}
\end{equation}
where the last inequality follows from independence od $Z_{n-1}$ and the pair $(\xi_1^{n-1},A_{n-1})$. The last expression is bounded by hypothesis (H1), because $(\xi_1^{n-1},A_{n-1})$ has the same distribution as $(Z_1,A_0)$. Finally appealing to \eqref{eq:sr4} we conclude \eqref{eq:s5}.

\medskip

\noindent {\sc Step 2: Proof of \eqref{eq:s4}  for $\alpha \in [1, \alpha_{\infty})$.}
 Below we will prove a slightly different inequality, saying
 that for any   $\beta\in[1,\alpha +\delta_\a)$ ($\delta_\a$ was defined in (H2) and (H3))
\begin{equation}\label{eq:s30}
  \E[Z_n^{\beta}] \le d(\beta)\cdot\; \left\{
  \begin{array}{c@{\ \ \mbox{ if }}l}
 \lambda(\beta)^n, &   \lambda(\beta)>\lambda(1),\\
 n^{6\beta+1}\lambda(1)^n, &   \lambda(\beta)\leq\lambda(1),\\
  \end{array}
  \right.
\end{equation}
for some continuous function $d$ on $[1,\alpha+\delta_\a)$. 

 Note that \eqref{eq:s30} entails \eqref{eq:s4}. Indeed, this is clear for $\alpha > 1$ (it is sufficient to choose $s = \sup_{\b\in[\a,\a+\delta)} d(\beta)$). Whereas for $\a=1$, just recall that
$\rho = \rho(1)>0$, thus $\lambda(s)>\lambda(1)$ for $s>1$.

\medskip

For any $\varepsilon, \beta >0$  and $x,y> 0$, the following inequality holds
	$$|x+y|^{\beta} \leq (1+\varepsilon)|x|^{\beta} + c(\beta,\varepsilon)|y|^{\beta},$$
	where $c(\beta,\varepsilon) = (1-(1+\varepsilon)^{-\beta^{-1}})^{-\beta}\sim (\varepsilon/\beta)^{-\beta}$ as $\varepsilon\to 0$.
	One can easily verify it by considering two cases $|y| \lesseqgtr ((1+\varepsilon)^{1/\beta}-1)|x|$. Combining the above inequality with
decomposition	$Z_n {=}Z_{n-1}A_{n-1}+ ( Z_n - Z_{n-1}A_{n-1})$ we obtain
that for any
	$\varepsilon > 0$
	\begin{equation}\label{eq:s23}
		\EE[Z_n^{\beta}] \leq (1+\varepsilon) \lambda(\beta) \EE[Z_{n-1}^{\beta}] +
			c(\beta,\varepsilon) \EE\big[ \big| Z_n - A_{n-1}Z_{n-1} \big|^{\beta}\big].
	\end{equation}
Proceeding as in \eqref{eq:sr5}, by
 the Marcinkiewicz-Zygmund inequality \eqref{eq:sr2} and the decomposition \eqref{eq:1},  we obtain 
	\begin{equation}\label{eq:s15}
\E\big[| Z_n-A_{n-1}Z_{n-1}|^{\beta}\big] \le \E\big[|Z_1 - A_0|^{\beta}\big] \E[Z_{n-1}^{\beta^*}],
	\end{equation}
where $\beta^* = \frac{\beta}{2}\vee 1$,  see Theorem I.5.1 in \cite{gut2009stopped} for details.
(Note that we are not able here to deduce here \eqref{eq:s5} because in the subcritical case it can happen that $\rho(\beta^*)< 0$ and thus we cannot refer to Lemma   \ref{lem:s1} and bound $\E[Z_{n-1}^{\beta^*}]$ by $c \lambda(\beta^*)^n$.)

 Combining \eqref{eq:s23},  \eqref{eq:s15} with the inequality $(1+\varepsilon)\leq e^\varepsilon$ for $\eps = n^{-2}$ we  end up with the following inequality
	\begin{equation*}
		\EE[Z_n^{\beta}] \leq e^{\frac 1{n^2}}\lambda(\beta)\EE[Z_{n-1}^{\beta}] +
			c(\beta)  n^{2\beta} \EE[Z_{n-1}^{\beta^*}],
	\end{equation*}
where $c(\beta) = c_0 \E\big[ |Z_1-A_0|^{\beta} \big] \beta^\beta$.
	By iterating this inequality we obtain
	\begin{equation}\label{eq:3.4}
\begin{split}
		\EE[Z_n^{\beta}] &\leq e^{\frac 1{n^2}} \lambda(\beta)\bigg( 	 e^{\frac 1{(n-1)^2}}\lambda(\beta)\EE[Z_{n-2}^{\beta}] + c(\beta) (n-1)^{2\beta} \EE[Z_{n-2}^{\beta^*}] \bigg) + c(\beta) n^{2\beta} \EE[Z_{n-1}^{\beta^*}]\\
			&\le e^{1+\frac 1{2^2}+\cdots+ \frac{1}{n^2}} \lambda(\beta)^n + c(\beta)\sum_{k=1}^n e^{\frac{1}{(n-k+1)^2}+\cdots+ \frac{1}{n^2}} \lambda(\beta)^{k-1} (n-k+1)^{2\beta} \EE[Z_{n-k}^{\beta^*}]\\
			&\le e^2 \lambda(\beta)^n + e^2 c(\beta)\sum_{k=1}^n  \lambda(\beta)^k (n-k+1)^{2\beta} \EE[Z_{n-k}^{\beta^*}].
\end{split}	
\end{equation}
 The above inequality is the key step in the proof of \eqref{eq:s30}. From now we proceed by induction on $m$ such that $\beta\in (2^m,2^{m+1}]$, i.e. our induction hypothesis is \eqref{eq:s30}.

Assume first that $\beta\in (1,2]$, then $\beta^*=1$ and $\E[Z_{n-k}^{\beta^*}] =\E[Z_{n-k}] = \lambda(1)^{n-k}$. If
$\lambda(\beta)>\lambda(1)$, then
$$
\E[Z_n^{\beta}] \le d(\beta) \lambda(\beta)^n
$$
for
$$
d(\beta) = e^2\bigg[ 1+ c(\beta)\sum_{k=0}^{\infty} \bigg( \frac{\lambda(1)}{\lambda(\beta)} \bigg)^k  (k+1)^{2\beta} \bigg].
$$
Otherwise, if $\lambda(\beta)\leq  \lambda(1)$,
$$
\E[Z_n^{\beta}] \le d(\beta) n^{2\beta+1} \lambda(1)^n
$$
for
$d_2(\beta) = e^2\big( 1+c(\beta)\big)$. Therefore we obtain \eqref{eq:s30} for $\beta \in (1,2]$.

Assume now that $\beta\in (2^m,2^{m+1}]$ for $m>1$. We again consider two cases.
If $\lambda(\beta)>\lambda(1)$, then $\lambda(\beta/2)< \lambda(\beta)$ and by the induction hypothesis
\begin{equation}\label{eq:wt5}
  \E\big[ Z_n^{\beta/2} \big] \le d(\b/2) n^{6\beta+1} \max\{ \lambda(1), \lambda(\beta/2) \}^n.
\end{equation}
Since $\beta^* = \beta/2$, combining the above inequality with \eqref{eq:3.4} we obtain
$$
\E \big[ Z_n^\beta \big] \le d(\beta) \lambda(\beta)^n
$$ for
$$d(\beta) = e^2\bigg[ 1+ c(\beta)\sum_{k=0}^{\infty} d(\b/2) k^{3\beta+1} \bigg( \frac{
 \max\{ \lambda(1), \lambda(\beta/2) \}
}{\lambda(\beta)} \bigg)^k  (k+1)^{2\beta} \bigg].
$$ Finally, assume $\lambda(\beta)\le\lambda(1)$. In this case $\lambda(\beta/2)< \lambda(1)$ and by the induction hypothesis
\begin{equation}\label{eq:wt6}
  \E\big[ Z_n^{\beta/2} \big] \le d(\b/2) n^{3\beta+1} \lambda(1)^n.
\end{equation}
Therefore, in view of \eqref{eq:3.4} we have
$$
\E \big[ Z_n^\beta \big] \le d(\beta) n^{6\beta + 1}  \lambda(1)^n
$$ for $d(\beta) = e^2 (1+ c(\beta))$.

\medskip

\noindent {\sc Step 3.  Proof of  \eqref{eq:s5}  for $\alpha\in [1,\alpha_{\infty}]$.} Observe that for $\alpha>1$ combining \eqref{eq:s30}
with \eqref{eq:s15} we obtain \eqref{eq:s5}.  Indeed, since $\rho(\alpha)>0$ one can take $\gamma = \lambda(\a_1)$ for some $\a_1 < \a$
such that $\max\{ \lambda(1), \lambda(\alpha^*)  \} < \lambda(\a_1) <  \lambda(\a) $, where $\a^* = \a/2\vee 1$.

It remains to prove \eqref{eq:s5} for $\alpha=1$, which can be done applying similar arguments as above. Namely, choose $\eps<\delta_1$
such that $\lambda(1/(1+\eps)) <\lambda(1)$. Then appealing to \eqref{eq:1} and  the Marcinkiewicz-Zygmund inequality  \eqref{eq:sr2}
 we obtain
$$
\E\big[ |Z_n-A_{n-1} Z_{n-1}|^{1+\eps} | {\mathcal Q} \big] \le
\E\big[ Z_{n-1} | {\mathcal Q} \big] \cdot
\E\big[ |\xi_1^{n-1} -A_{n-1} |^{1+\eps} | {\mathcal Q} \big].
$$
 Hence, the conditional Jensen inequality entails
\begin{align*}
    \E\big[|Z_n - A_{n-1} Z_{n-1}|\big] &\le
  \E\Big[ \E \big[ |Z_n - A_{n-1} Z_{n-1}|^{1+\eps} | \mathcal{Q}  \big]^{\frac{1}{1+\eps}} \Big] \\
  &\le c \E\Big[ \Pi_{n-1}^{1/(\alpha+\eps)}  \E\big[  |\xi_1^n|^{1+\eps} + |A_{n-1}|^{1+\eps}  | {\mathcal Q}  \big] \Big]\\
  &\le c \big( \E [Z_1^{1+\eps}]  +\lambda(\alpha+\eps) \big) \cdot \lambda(1/(1+\eps))^n.
\end{align*}
We conclude \eqref{eq:s5} with $\gamma = \lambda(1/(1+\eps))$ and complete thus  proof of the Lemma.
\end{proof}

\subsection{Large deviations of $\{\Pi_n\}_{n \geq 0}$ and its perturbations}

Now we recall classical results concerning large deviations for random walks due to Bahadur and Ranga Rao \cite{bahadur:rao} and Petrov \cite{petrov1965probabilities}.

\begin{lem}\label{lem: petrov1}
Fix an arbitrary  $\a<\a_\infty$ such that $\rho = \rho(\a)>0$. 
	If $\{\overline{\delta}_n\}_{n \geq 0}$ is a sequence converging to $0$, then for any $\varepsilon >0$
	\begin{equation*}
		\PP\left[\Pi_n > e^{n\rho+n \delta_n} \right] \sim
			\frac{e^{-n\Lambda^*(\rho)}}{ \alpha \sigma(\alpha) \sqrt{2 \pi n}}
			\exp \left\{ - \alpha n \delta_n  - \frac{n \delta_n^2}{2 \sigma^2(\alpha)}
			\left(1 + O(|\delta_n|) \right)   \right\},
	\end{equation*}
	uniformly over $\rho$ and $\{\delta_n\}_{n \geq 0}$ such that
	\begin{equation*}
		\E [\log A] +  \varepsilon  \leq \rho \leq \rho_{\infty} - \varepsilon \quad \mbox{and}
		\quad |\delta_n|\leq \overline{\delta}_n.
	\end{equation*}
	Moreover 	
	\begin{equation*}
		\PP\left[\Pi_{n-j_n} > e^{n\rho+n \delta_n} \right] \sim
			\frac{e^{-n\Lambda^*(\rho)}}{ \alpha \sigma(\alpha) \sqrt{2 \pi n}}
			\exp \left\{ - \alpha n \delta_n  - j_n\Lambda(\alpha)  \right\}, 
	\end{equation*}
	uniformly over $\rho$ and $\{\delta_n\}_{n \geq 0}$ such that
	\begin{equation*}
		\E [\log A] +  \varepsilon  \leq \rho \leq \rho_{\infty} - \varepsilon \quad \mbox{and}
		\quad \max \{n^{1/2}|\delta_n|, n^{-1/2} j_n  \}\leq \overline{\delta}_n.
	\end{equation*}
\end{lem}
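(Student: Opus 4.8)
The plan is to derive both asymptotics from the classical Bahadur--Rao/Petrov theorem via an exponential change of measure, paying careful attention to the uniformity in $\rho$ and in the perturbation $\delta_n$. First I would recall the standard tilting: for $\alpha$ with $\Lambda'(\alpha)=\rho$, introduce the tilted law $\widetilde{\mathbb P}$ under which $\log A$ has distribution $e^{\alpha x - \Lambda(\alpha)}\,\P(\log A \in dx)$, so that the tilted mean is exactly $\rho$ and the tilted variance is $\sigma^2(\alpha)=\Lambda''(\alpha)$. Writing $S_n = \log \Pi_n = \sum_{k=1}^n \log A_k$, the change-of-measure identity gives
$$
\P[S_n > n\rho + n\delta_n] = e^{-n\Lambda(\alpha)}\,\widetilde{\mathbb E}\Big[ e^{-\alpha(S_n - n\rho)}\,\mathbf 1_{\{S_n - n\rho > n\delta_n\}} \Big].
$$
Since $\Lambda^*(\rho) = \alpha\rho - \Lambda(\alpha)$, the prefactor $e^{-n\Lambda(\alpha)}$ combines with the $e^{-\alpha n\rho}$ pulled out of the exponent to produce $e^{-n\Lambda^*(\rho)}$, as required.

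Next I would analyze the tilted expectation. Under $\widetilde{\mathbb P}$, the centered sum $S_n - n\rho$ satisfies a local CLT (this is where the non-lattice assumption~\eqref{eq:stand25} enters), so $(S_n-n\rho)/(\sigma(\alpha)\sqrt n)$ is approximately standard normal with a density that is locally uniform. For the first assertion, one substitutes $u = (S_n - n\rho - n\delta_n)/(\sigma(\alpha)\sqrt n)$ and evaluates $\widetilde{\mathbb E}[e^{-\alpha(S_n-n\rho)}\mathbf 1_{\{S_n-n\rho>n\delta_n\}}]$ against the Gaussian density; the Gaussian integral $\int_0^\infty e^{-\alpha\sigma\sqrt n(u + \delta_n\sqrt n/\sigma)}\,\varphi\!\big(u+\delta_n\sqrt n/\sigma\big)\,du$ is computed by completing the square, which yields the factor $\tfrac{1}{\alpha\sigma(\alpha)\sqrt{2\pi n}}\exp\{-\alpha n\delta_n - n\delta_n^2/(2\sigma^2(\alpha))\}$ together with the $(1+O(|\delta_n|))$ correction coming from the Edgeworth expansion of the local CLT and from the fact that $\delta_n\to 0$. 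For the second assertion one writes $\Pi_{n-j_n} = \Pi_n / \prod_{k=n-j_n+1}^n A_k$ — or more cleanly just applies the same tilting at index $n - j_n$ with the threshold $n\rho + n\delta_n = (n-j_n)\rho + (n\delta_n + j_n\rho)$; now the effective perturbation is $\widetilde\delta := \delta_n + j_n\rho/(n-j_n)$, and since $j_n = O(\sqrt n)$ the term $j_n\rho$ survives in the exponent as $-\alpha j_n\rho$, which combined with the extra $-j_n\Lambda^*(\rho)$ shift in the rate and with $\alpha\rho - \Lambda^*(\rho) = \Lambda(\alpha)$ collapses to exactly $-j_n\Lambda(\alpha)$; the quadratic term $\widetilde\delta^2$ is then of order $(j_n/n)^2 + \delta_n^2 = O(1/n)$ after multiplication by $n$, hence contributes only to the $(1+o(1))$ and is absorbed.

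The main obstacle is the uniformity: the statement demands the asymptotic equivalence to hold uniformly over $\rho$ in a compact subinterval of $(\E\log A, \rho_\infty)$ and over $\delta_n$ (resp.\ over $\delta_n$ and $j_n$) in the prescribed ranges. This requires a version of the local limit theorem / Edgeworth expansion for the tilted walks that is uniform in the tilting parameter $\alpha$ — equivalently, uniform control of the characteristic function $\widetilde{\mathbb E}[e^{it(\log A - \rho)}]$ away from $t=0$ and of its behavior near $t=0$, together with uniform bounds on $\sigma(\alpha)$ from above and below (both follow from smoothness and strict convexity of $\Lambda$ on the compact $\alpha$-range). The cleanest route is to cite the uniform form of Petrov's theorem directly (as in \cite{petrov1965probabilities}, see also \cite{DZ}): it already delivers $\P[\Pi_n > e^{n\rho}] \sim e^{-n\Lambda^*(\rho)}/(\alpha\sigma(\alpha)\sqrt{2\pi n})$ uniformly in $\rho$ on compacts, and a routine perturbation argument — comparing the event $\{S_n > n\rho + n\delta_n\}$ to $\{S_n > n\rho'\}$ with $\rho' = \rho + \delta_n$, then Taylor-expanding $\Lambda^*(\rho') = \Lambda^*(\rho) + \alpha\delta_n + \delta_n^2/(2\sigma^2(\alpha)) + O(|\delta_n|^3)$ and $\alpha(\rho')\sigma(\alpha(\rho')) = \alpha\sigma(\alpha)(1+O(|\delta_n|))$ — upgrades it to the perturbed statements. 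Handling the $j_n$-shift in the second part is then the same Taylor expansion applied at index $n-j_n$, with the elementary identity $\Lambda^*(\rho) = \alpha\rho - \Lambda(\alpha)$ used to rewrite $-j_n\Lambda^*(\rho) - \alpha j_n \rho \cdot(\text{sign bookkeeping})$ into $-j_n\Lambda(\alpha)$; care is only needed to check that all the error terms, after multiplication by $n$, are $o(1)$ uniformly given $\max\{n^{1/2}|\delta_n|, n^{-1/2}j_n\} \le \overline\delta_n \to 0$.
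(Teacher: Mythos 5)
Your proposal is correct in substance, but note that the paper does not actually prove this lemma: the first display is quoted verbatim from Petrov \cite{petrov1965probabilities}, and the time-shifted second display is quoted from Lemma 2.4 of \cite{buraczewski2015pointwise}. Your route --- exponential tilting plus a uniform local CLT/Edgeworth expansion for the first part (or, as you also suggest, simply citing the uniform form of Petrov's theorem), and then deducing the second part from the first by writing the threshold as $(n-j_n)\rho+(n-j_n)\widetilde\delta$ with $(n-j_n)\widetilde\delta=n\delta_n+j_n\rho$ and invoking $\Lambda^*(\rho)=\alpha\rho-\Lambda(\alpha)$ --- is a legitimate self-contained derivation of exactly what the paper outsources; what it buys is independence from the external reference, at the cost of having to carry the uniformity in the tilting parameter through the local limit theorem yourself. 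The reduction in the second part does work: the hypothesis $\max\{n^{1/2}|\delta_n|,\,n^{-1/2}j_n\}\leq\overline\delta_n$ gives $(n-j_n)\widetilde\delta^{\,2}\leq c\,\overline\delta_n^{\,2}\to 0$, so the quadratic correction is genuinely absorbed into the $1+o(1)$, while $\sqrt{n-j_n}\sim\sqrt n$ and $\alpha(\rho+\widetilde\delta)\,\sigma\big(\alpha(\rho+\widetilde\delta)\big)=\alpha\sigma(\alpha)\big(1+O(|\widetilde\delta|)\big)$ are harmless uniformly on the compact $\rho$-range. Two small slips to fix in a write-up: the change-of-measure identity should carry the factor $e^{+n\Lambda(\alpha)}$ (so that together with $e^{-\alpha n\rho}$ it yields $e^{-n\Lambda^*(\rho)}$), and the rate shift at time $n-j_n$ is $e^{+j_n\Lambda^*(\rho)}$, which combined with $e^{-\alpha\rho j_n}$ gives $e^{-j_n\Lambda(\alpha)}$; also $j_n$ is $o(n^{1/2})$ rather than $O(n^{1/2})$, and this is precisely what makes the error terms $o(1)$ instead of merely $O(1)$.
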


The first part of the Lemma \ref{lem: petrov1} is just  the  statement of Petrov's results \cite{petrov1965probabilities}. The second part with a slight  perturbation of time parameter is proved
in \cite{buraczewski2015pointwise} as Lemma 2.4. Note that whenever $\rho_{\infty} = \infty$, then the convergence in Lemma~\ref{lem: petrov1} is uniform in $\rho$ on compact subsets of $(\E \log A, \rho_\infty)$.


Define
$$
	\Pi_{k,n} = \prod_{j=k}^{n-1}A_j, \mbox{ if } k < n \qquad \mbox{ and } \qquad
\Pi_{k,n} = 1, \mbox{ if } k \ge  n.
$$
To prove our main results we intend to compare $Z_n$ with $\Pi_n$, however for technical reasons
(it turns out that it is difficult to control deviations of $Z_n$) it is more convenient to
compare $\Pi_n$ with $Z_{j_n}\Pi_{j_n,n}$ for $j_n = O(\log n)$. Therefore we need a slightly
different version of Petrov's result given by the next Lemma ($H_n$ will be later on replaced by $Z_n$).


\begin{lem}\label{lem:branchPetrov}
Assume we are given $\a\le \a_\infty$ such that $\rho = \rho(\a) >0$.
	Let $H=\{ H_n \}_{n \geq 0}$ be a sequence of non-negative integer-valued random variables
such that $H_k$ is independent of $\Pi_{k,n}$ for every $k$.
Assume that
	\begin{equation}\label{eq:star}
		\EE[H_n^{\alpha}] \sim c_H \lambda(\alpha)^n
	\end{equation} for some $c_H=c_H(\alpha)>0$
	and that for some $\varepsilon_0>0$, one can find a constant $c(\alpha, \varepsilon_0)$ such that
	\begin{equation*}
		\EE[H_n^{s}] \leq c(\alpha,\varepsilon_0) n^{c(\alpha, \varepsilon_0)} \lambda(s)^n \quad \mbox{for all $n$ and } s \in [\alpha, \alpha+\varepsilon_0].
	\end{equation*}
	Then for $j_n = O(\log n )$ 
	\begin{equation*}
		\PP[H_{j_n}\Pi_{j_n,n} > e^{n\rho + n\delta_n}] \sim  \frac{c_H}{ \alpha \sigma(\alpha) \sqrt{2 \pi }}  \frac{e^{-n\Lambda^*(\rho) }}{\sqrt{n}} e^{-\alpha n \delta_n}
	\end{equation*}
	uniformly with respect to
	\begin{equation*}
		 \E [\log A] + \varepsilon  \leq \rho \leq \rho_{\infty} - \varepsilon \quad \mbox{and}
		\quad n^{1/2}|\delta_n| \leq \overline{\delta}_n.
	\end{equation*}
\end{lem}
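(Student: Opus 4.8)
The plan is to condition on the random factor $H_{j_n}$, use its independence from $\{\Pi_n\}_{n\ge0}$ to reduce the statement to the Bahadur--Rao--Petrov asymptotics of the second part of Lemma~\ref{lem: petrov1} for $\Pi_{j_n,n}$ with a \emph{random, slightly perturbed} threshold, and then integrate the resulting estimate against the law of $H_{j_n}$. The decisive algebraic point is that the power of $\lambda(\alpha)$ produced by shifting the time parameter of $\Pi$ by $j_n$ is exactly cancelled by the growth $\lambda(\alpha)^{j_n}$ of $\EE[H_{j_n}^{\alpha}]$ coming from~\eqref{eq:star}, leaving only the constant $c_H$. Throughout I would assume $j_n\to\infty$ (the bounded case is easier), and all error terms below are uniform in the stated ranges of $\rho$ and $\delta_n$, this uniformity being inherited from Lemma~\ref{lem: petrov1} and the local boundedness of $c_H(\alpha)$, $\sigma(\alpha)$ and of the constants in the moment bounds on $H$.

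First I would write, by independence,
\[
	\PP\big[H_{j_n}\Pi_{j_n,n}>e^{n\rho+n\delta_n}\big]=\EE\big[g_n(H_{j_n})\big],\qquad
	g_n(h):=\PP\big[\Pi_{j_n,n}>e^{n\rho+n\delta_n-\log h}\big],
\]
and split according to whether $H_{j_n}\le n^{\theta}$ or $H_{j_n}>n^{\theta}$, where $\theta$ is a large constant chosen at the end. Since $\Pi_{j_n,n}$ equals $\Pi_{n-j_n}$ in distribution (up to a fixed shift of the endpoint), $g_n(h)$ is exactly of the form treated in Lemma~\ref{lem: petrov1}, with normalised overshoot $\delta_n-(\log h)/n$ and time shift $j_n=O(\log n)$; for $h\le n^{\theta}$ one has $n^{1/2}\big|\delta_n-(\log h)/n\big|\le\overline{\delta}_n+\theta n^{-1/2}\log n\to0$ and $n^{-1/2}j_n\to0$, so Lemma~\ref{lem: petrov1} applies uniformly over such $h$ and, after using $\Lambda^*(\rho)=\alpha\rho-\Lambda(\alpha)$ to turn the $j_n$-factor into a power of $\lambda(\alpha)$, gives
\[
	g_n(h)=\frac{e^{-n\Lambda^*(\rho)}}{\alpha\sigma(\alpha)\sqrt{2\pi n}}\,e^{-\alpha n\delta_n}\,h^{\alpha}\,\lambda(\alpha)^{-j_n}\,(1+o(1)).
\]
Summing this against $\PP[H_{j_n}=h]$ over $h\le n^{\theta}$ and invoking $\lambda(\alpha)^{-j_n}\EE[H_{j_n}^{\alpha}]\to c_H$ produces the asserted main term $\frac{c_H}{\alpha\sigma(\alpha)\sqrt{2\pi}}\,n^{-1/2}e^{-n\Lambda^*(\rho)}e^{-\alpha n\delta_n}$, provided the contribution of $\{H_{j_n}>n^{\theta}\}$ is negligible and $\EE\big[H_{j_n}^{\alpha}\mathbf{1}_{\{H_{j_n}>n^{\theta}\}}\big]=o\big(\lambda(\alpha)^{j_n}\big)$.

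For the tail I would \emph{not} bound $g_n$ by $1$ — that only yields polynomial smallness — but use Markov's inequality $\PP[\Pi_{j_n,n}>e^{x}]\le\lambda(\alpha)^{n-j_n}e^{-\alpha x}$, which retains the large-deviation rate and gives $g_n(h)\le e^{-n\Lambda^*(\rho)-\alpha n\delta_n}h^{\alpha}n^{O(1)}$ for every $h$. Combined with the extra integrability of $H$ and $j_n=O(\log n)$,
\[
	\EE\big[H_{j_n}^{\alpha}\mathbf{1}_{\{H_{j_n}>n^{\theta}\}}\big]\le n^{-\theta\varepsilon_0}\EE\big[H_{j_n}^{\alpha+\varepsilon_0}\big]\le n^{-\theta\varepsilon_0}n^{O(1)},
\]
so $\EE\big[g_n(H_{j_n})\mathbf{1}_{\{H_{j_n}>n^{\theta}\}}\big]\le e^{-n\Lambda^*(\rho)-\alpha n\delta_n}n^{-\theta\varepsilon_0+O(1)}$; taking $\theta$ so large that $\theta\varepsilon_0$ exceeds $\tfrac{1}{2}$ plus the fixed exponent $O(1)$ makes this $o\big(n^{-1/2}e^{-n\Lambda^*(\rho)}e^{-\alpha n\delta_n}\big)$ and at the same time secures the auxiliary bound $\EE[H_{j_n}^{\alpha}\mathbf{1}_{\{H_{j_n}>n^{\theta}\}}]=o(\lambda(\alpha)^{j_n})$ used above. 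Adding the two parts finishes the proof, and the uniformity claim follows since every estimate used is uniform in the prescribed range.

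The hard part will be this tail estimate: one must resist discarding $\Pi_{j_n,n}$ on $\{H_{j_n}>n^{\theta}\}$ and instead keep its exponential decay, and then check that the factor $\sqrt n$ by which Markov's bound falls short of the Bahadur--Rao asymptotics is overcome by the polynomial gain $n^{-\theta\varepsilon_0}$ extracted from the $(\alpha+\varepsilon_0)$-th moment of $H$ — precisely where the hypothesis $\EE[H_n^{s}]\le cn^{c}\lambda(s)^{n}$ for $s$ slightly above $\alpha$ is needed. The rest is routine bookkeeping of the $O(\log n)$ shifts $j_n$ and $\log h$ inside Petrov's expansion, together with the check that the $(1+o(1))$ in the Petrov estimate is uniform over $h\le n^{\theta}$ so that it survives summation against the law of $H_{j_n}$.
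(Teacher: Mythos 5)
Your proof is correct and takes essentially the same route as the paper's: condition on $H_{j_n}$ using independence, truncate the conditioning variable at a polynomially-growing level, apply the time-shifted Petrov asymptotic (second part of Lemma~\ref{lem: petrov1}) uniformly over the truncated range (checking that both the perturbed $\delta_n$ and the shift $j_n$ stay within the envelope $\overline{\delta}_n$), integrate against the law of $H_{j_n}$ using $\lambda(\alpha)^{-j_n}\EE[H_{j_n}^\alpha]\to c_H$, and control the complementary large-$H_{j_n}$ event by combining a Chernoff bound on $\Pi_{j_n,n}$ with the surplus moment $\EE[H_{j_n}^{\alpha+\varepsilon_0}]$. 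The only substantive difference is in the truncation/tail step: you truncate at $n^\theta$ with $\theta$ a large fixed constant and extract a polynomial gain $n^{-\theta\varepsilon_0}$ from the fixed excess moment $\varepsilon_0$, whereas the paper truncates at $e^{\rho j_n+j_n^\beta}$ with $\beta\in(1/2,1)$ and plays with an auxiliary parameter $\varepsilon\to 0$, landing on a decay of the order $e^{-(\log n)^{\beta-1/2}}$. Your variant is cleaner and manifestly beats the required $o(n^{-1/2})$; the paper's decay is subpolynomial and, as written, does not obviously dominate $\sqrt n$, so your tail estimate is arguably the more robust of the two. Both routes crucially use the same algebraic cancellation between $\lambda(\alpha)^{j_n}$ from the $\alpha$-moment of $H_{j_n}$ and $\lambda(\alpha)^{-j_n}$ from the $j_n$-shift in Petrov.
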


\begin{proof}  The  proof consists of three steps. \medskip

\noindent
{\sc Step 1: big values of $H_{j_n}$.}  We will show that the set where $H_{j_n}$ attains large values is negligible, i.e.  for $\beta \in (1/2,1)$ we have
	\begin{equation}\label{eq:mm1}
		\PP\left[H_{j_n}\Pi_{j_n,n}>e^{n\rho + n\delta_n}, \: H_{j_n} > e^{\rho j_n + j_n^{\beta}}\right]
			= o \left( \frac{e^{-n\Lambda^*(\rho) -\alpha n\delta_n }}{\sqrt{n}}\right).
	\end{equation}
Dividing values of $H_{j_n}$ into lacunary intervals and invoking independence of $H_{j_n}$ and $\Pi_{j_n,n}$ we obtain 
	\begin{align*}
		\PP\Big[H_{j_n}\Pi_{j_n,n} > & e^{n\rho + n\delta_n}, \: H_{j_n} > e^{ \rho j_n + j_n^{\beta}}\Big]
			 \\ &\leq \sum_{m\geq 0} \PP\left[H_{j_n}\Pi_{j_n,n} >e^{n\rho + n\delta_n}, \:e^{ \rho j_n + j_n^{\beta}} e^m< H_{j_n} \le
				e^{ \rho j_n + j_n^{\beta}}e^{m+1}\right]\\
			 &\leq \sum_{m\geq 0} \PP\left[\Pi_{j_n,n} >e^{n\rho + n\delta_n-\rho j_n -j_n^{\beta}} e^{-m-1} \right]
			\PP\left[H_{j_n} >  e^{ \rho j_n + j_n^{\beta}}e^{m}\right].
	\end{align*}
 Recall  the formula for the Fenchel-Legendre transform:  \begin{equation}\label{eq:fenchel}
\Lambda^*(\rho) = \alpha \rho - \Lambda(\alpha).
\end{equation} Then
	the 
first factor in the term of the series can be bounded using the Markov inequality in the following fashion 
	\begin{equation*}
		\PP\big[\Pi_{j_n,n} >e^{n\rho + n\delta_n} e^{-\rho j_n -j_n^{\beta}} e^{-m-1} \big] \leq
			c e^{-n\Lambda^*(\rho)} e^{-\Lambda(\alpha)j_n-n\alpha\delta_n +\alpha\rho j_n + \alpha j_n^{\beta} + \alpha m  }.
	\end{equation*}
	In order to treat the  second factor, take any $\varepsilon\in (0, \varepsilon_0)$ and write the Taylor expansion
	$\Lambda(\alpha+\varepsilon) = \Lambda(\alpha)+\rho\varepsilon +\frac{\varepsilon^2}{2}\Lambda''(s)$,
	for some $s \in [\alpha, \alpha+\varepsilon]$. Taking $c \geq \sup_{\alpha \in [\alpha, \alpha + \eps_0]} \Lambda''(\alpha) +c(\alpha,\varepsilon_0)$
	we may write
\begin{equation}\label{eq:sr10}
\begin{split}
		\PP\big[H_{j_n} > e^{\rho j_n + j_n^{\beta}}e^{m}\big]
			& \leq \EE[H_{j_n}^{\alpha+\varepsilon}]
			e^{-\rho (\alpha +\varepsilon) j_n -(\alpha + \varepsilon) j_n^{\beta}}e^{-(\alpha+\varepsilon)m} \\
			&\leq c j_n^{c}e^{j_n \Lambda(\alpha+\varepsilon)}
				e^{-\rho (\alpha +\varepsilon) j_n -(\alpha + \varepsilon) j_n^{\beta}}e^{-(\alpha+\varepsilon)m} \\
			&\leq c j_n^c e^{j_n \Lambda(\alpha)  + \varepsilon \rho j_n +c j_n\varepsilon^2}
				e^{-\rho (\alpha +\varepsilon) j_n -(\alpha + \varepsilon) j_n^{\beta}}e^{-(\alpha+\varepsilon)m}\\
			& =cj_n^c\lambda(\alpha)^{j_n} e^{c j_n\varepsilon^2}
				e^{-\rho \alpha j_n -(\alpha + \varepsilon) j_n^{\beta}}e^{-(\alpha+\varepsilon)m}.
	\end{split}
\end{equation}
	If we put this two bounds together and
	sum over $m \geq 0$, we are allowed to infer that
$$		\PP\left[H_{j_n}\Pi_{j_n,n} >e^{n\rho + n\delta_n}, \: H_{j_n} > e^{ \rho j_n + j_n^{\beta}}\right]
\leq  c  \frac{e^{-n\Lambda^*(\rho)}}{1-e^{-\varepsilon}}j_n^c
				\exp\left\{-n\alpha\delta_n-\varepsilon j_n^{\beta} +c j_n\varepsilon^2 \right\}.
$$
	From this point,  the desired bound \eqref{eq:mm1} follows, if one takes  $\varepsilon = (\log n)^{-1/2}$.

\medskip

\noindent
{\sc Step 2: truncated moments of $H_{j_n}$.} We have
	\begin{equation*}
		\lim_{n \to \infty}\lambda(\alpha)^{- j_n}
		\EE \big[H_{j_n}^{\alpha} {\bf 1}_{\{  H_{j_n} \leq e^{ \rho j_n + j_n^{\beta}}\}} \big] = c_H,
	\end{equation*}
	where $c_H$ is the value of the limit in~\eqref{eq:star}. To make this evident, note that evoking \eqref{eq:sr10}
once again, we have
	\begin{align*}
		\EE\big[H_{j_n}^{\alpha} ; \:  H_{j_n} > e^{\rho j_n + j_n^{\beta}}  \big]
		& \leq \sum_{m \geq 0} \EE \big[H_{j_n}^{\alpha}; \:
			e^{ \rho j_n + j_n^{\beta}} e^m< H_{j_n} \leq  e^{ \rho  j_n +j_n^{\beta}}e^{m+1}\big] \\
		& \leq \sum_{m \geq 0} e^{ \alpha\rho j_n + \alpha j_n^{\beta}}e^{\alpha(m+1)}
			\PP\big[ H_{j_n}>e^{ \rho j_n + j_n^{\beta}} e^m\big]\\
		& \leq \frac{1}{1-e^{-\varepsilon}}  c j_n^c \exp\{-\varepsilon j_n^{\beta} +c j_n\varepsilon^2\}\lambda(\alpha)^{j_n}.
	\end{align*}
	Whence, if we put  $\varepsilon = (\log n)^{-\eta}$ for $\eta$ such that $\beta -\eta > 1-2 \eta$ ( $\eta > 1-\beta$) we get
	\begin{equation*}
		\lim_{n \to \infty}\lambda(\alpha)^{-j_n}
			\EE\big[H_{j_n}^{\alpha}{\bf 1}_{\{  H_{j_n} > e^{\rho j_n + j_n^{\beta}}\}}  \big] = 0.
	\end{equation*}
	So the claim in this step follows.

\medskip

\noindent
{\sc Step 3: conclusion.}  In view of {\sc Step 1} it is sufficient to justify
	\begin{equation*}
		\PP\big[H_{j_n}\Pi_{j_n,n} >e^{n\rho + n\delta_n}, \:  H_{j_n} \leq  e^{ \rho j_n + j_n^{\beta}}\big]
			\sim   \frac{c_H e^{-n\Lambda^*(\rho)}}{ \alpha \sigma(\alpha) \sqrt{2 \pi n}} e^{-n\alpha\delta}, \qquad
\mbox{ as } n\to\infty.
	\end{equation*}
For this purpose, 	 by Lemma~\ref{lem: petrov1} and since $H$ is integer valued, we can write
	\begin{align*}
		\PP\Big[H_{j_n}\Pi_{j_n,n} >e^{n\rho + n\delta_n}, \: &  H_{j_n} \leq   e^{ \rho j_n + j_n^{\beta}}\Big]
			 = \PP\left[H_{j_n}\Pi_{j_n,n} >e^{n\rho + n\delta_n}, \: 1\le H_{j_n} \leq  e^{ \rho j_n + j_n^{\beta}}\right]\\
			& = \sum_{k=1}^{\exp\{ \rho j_n + j_n^{\beta}\} }
				\PP\left[\Pi_{j_n,n} >e^{n\rho + n \delta_n-\log k} \right] \PP[H_{j_n}  = k ]\\
			& =\frac{1}{ \alpha \sigma(\alpha) \sqrt{2 \pi n}} e^{-n\Lambda^*(\rho) - n\alpha\delta_n}
				\frac{1+o(1)}{\lambda(\alpha)^{j_n}}
				\EE\big[H_{j_n}^{\alpha} {\bf_ 1}_{\{  H_{j_n} \leq e^{ \rho j_n + j_n^{\beta}}\}}\big].
	\end{align*}
 For the  last inequality we use that the sequence $\overline \delta_n = n^{1/2}\sup_{1\le k \le e^{\rho j_n + j_n^\beta}}|\delta_n - \log k/\log n|$ converges to 0 and refer to the second part of  Lemma \ref{lem:branchPetrov}.
	An appeal to the {\sc Step 2} concludes the proof.
\end{proof}

\subsection{Large deviations of $Z_n$}

Now we are able to present a relatively  short proof of our first result.

\begin{proof}[Proof of Theorem \ref{thm:mthm1}]
To prove the result we estimate deviations of $Z_n$ from its quenched mean $ \Pi_n = \EE[|Z_n | \mathcal {Q}]$. 
 Take 
$n' = K\lfloor \log n\rfloor$ for some large $K$ that will be specified below.
Note that
	\begin{equation}\label{eq:12}
		Z_{n'}\Pi_{n',n} = Z_n + \sum_{k=n'+1}^n (A_{k-1}Z_{k-1} - Z_k)\Pi_{k,n}
	\end{equation}
	and whence by Lemma \ref{lem:s4} and the Chebyshev inequality  for any positive $t$  we have
	\begin{align*}
		\PP\big[\big| Z_n -  Z_{n'}\Pi_{n',n} \big| > t \big]
			& \leq \PP\bigg[\bigg|\sum_{k=n'+1}^n (A_{k-1}Z_{k-1} - Z_k)\Pi_{k,n}\bigg| > \sum_{k=n'+1}^n \frac{t}{2k^2} \bigg]\\
			& \leq \sum_{k=n'+1}^n \PP\bigg[ \big|(A_{k-1}Z_{k-1} - Z_k)\Pi_{k,n} \big|> \frac{ t}{2k^2}  \bigg] \\
			& \leq c \sum_{k=n'+1}^n  \gamma^{k} \lambda(\alpha)^{n-k} k^{2\alpha}  t^{-\alpha}\\
&  \le c \lambda(\alpha)^n t^{-\alpha} \cdot \sum_{k=n'+1}^n \Big( \frac{\gamma}{\lambda(\alpha)} \Big)^k k^{2\alpha}
  \le c\eta^{n'} \lambda(\alpha)^n  t^{-\alpha}
	\end{align*}
for some $\eta = \frac{\gamma}{\lambda(\alpha)} <1$.
 Choosing $t = \delta e^{n\rho}$ for some fixed $\delta>0$ we obtain
	\begin{equation} \label{eq:34}
		\PP\big[\big| Z_n -  Z_{n'}\Pi_{n',n} \big| > \delta e^{n\rho} \big]  \leq
c\delta^{-\alpha} \eta^{n'} e^{-n \Lambda^*(\rho)} = \delta^{-\alpha} \cdot  o\big( {e^{-n\Lambda^*(\rho)}}/ {\sqrt{n}} \big)
	\end{equation}
 if only $K$ is large enough.
 Fix a small $\eps>0$. Then
$$
\P[Z_n>e^{n\rho}]
\le \P\big[ Z_{n'}\Pi_{n',n} > e^{n\rho-\eps} \big] +
		\PP\big[\big| Z_n -  Z_{n'}\Pi_{n',n} \big| > e^{n\rho} (1- e^{-\eps}) \big].
$$
 Combining \eqref{eq:34} with Lemma \ref{lem:branchPetrov} (its hypotheses are satisfied for $H_n = Z_n$ thanks to Lemmas  \ref{lem:s1} and \ref{lem:s4}) yields
$$
\limsup_{n\to\infty} \sqrt n e^{n\Lambda^*(\rho)} \P\big[ Z_n > e^{n\rho}\big] \le \frac{c_He^{\a \eps}}{\alpha \sigma(\alpha) \sqrt{2\pi}}.
$$
Finally  passing with $\eps\to 0$
we obtain
$$
\limsup_{n\to\infty} \sqrt n e^{n\Lambda^*(\rho)} \P\big[ Z_n > e^{n\rho}\big] \le \frac{c_H}{\alpha \sigma(\alpha) \sqrt{2\pi}}.
$$  Similar arguments can be used to estimate the lower limit.
Starting with the inequality
$$
\P[Z_n>e^{n\rho}]
\ge \P\big[ Z_{n'}\Pi_{n',n} > e^{n\rho+\eps} \big] -
		\PP\big[\big| Z_n -  Z_{n'}\Pi_{n',n} \big| > e^{n\rho} ( e^{\eps}-1) \big]
$$ for any $\eps >0$, invoking \eqref{eq:34}, Lemma \ref{lem:branchPetrov} and sending $\eps\to 0$ we arrive at

$$
\liminf_{n\to\infty} \sqrt n e^{n\Lambda^*(\rho)} \P\big[ Z_n > e^{n\rho}\big] \ge \frac{c_H}{\alpha \sigma(\alpha) \sqrt{2\pi}}.
$$  Thus we conclude the result.
\end{proof}

\section{Large deviations of  $T_t^Z$}\label{sec:bp}

\subsection{Preliminary bounds for $T_t^Z$}

To estimate large deviations of $T_t^Z$, similarly as in the previous Section, we
will show the corresponding precise large deviations for the first passage time of
\begin{equation*}
 M_n =  Z_{n'}	\max_{n' \leq j < n} \Pi_{n',j},
\end{equation*}
where $n' = K \lfloor \log n \rfloor $ for big enough $K$ and, as always, $n = \lfloor \rho^{-1} \log t \rfloor$.
We will approximate  $\max_{i\le n}Z_i$ via the mentioned process to conclude the large deviations results for the former which is of our interest.

\begin{lem}\label{lem:approx}
	Suppose that the assumptions of Theorem~\ref{thm:mthm1} are in force and that $n$ and $t$ are related by \eqref{eq:nt}. Then, for any fixed $\delta \in (0,1)$
	$$
		\PP\big[M_{n}>(1+\delta)at, \: \max_{0\leq j \leq n}Z_j \leq at \big] = o\left( \frac{a^{-\a}t^{-
\Lambda^*(\rho)/\rho
}}{\sqrt{\log t}} \right).
	$$
	and
	$$
		\PP\big[M_{n}\leq (1-\delta)at, \: \max_{0\leq j \leq n}Z_j > at \big] = o\left( \frac{a^{-\a}t^{-
\Lambda^*(\rho)/\rho
}}{\sqrt{\log t}} \right)
	$$
	uniformly in $a \in [n^{-K},n^K]$ for any fixed $K>0$.
\end{lem}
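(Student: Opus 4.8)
The plan is to compare $Z$ with the auxiliary multiplicative process $X_j=Z_{n'}\Pi_{n',j}$, which tracks the conditional mean of $Z$ restarted from generation $n'$. The basic identity is the branching decomposition: conditionally on $\mathcal Q$ and on $Z_{n'}$,
$$Z_j=\sum_{i=1}^{Z_{n'}}Z^{(i)}_{j-n'},$$
where the $Z^{(i)}_{j-n'}$ are i.i.d.\ copies (in the shifted environment) with conditional mean $\Pi_{n',j}$, so that $\EE[Z_j\mid\mathcal Q,Z_{n'}]=X_j$; equivalently, as in \eqref{eq:12}, $X_j-Z_j$ is a sum of martingale differences whose moments are controlled by Corollary~\ref{cor:3}. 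The point is that, once $X_j$ and $Z_j$ are pinned to disagree by more than a fixed fraction of $at$, this is a moderate deviation of a conditionally i.i.d.\ sum, and the logarithmically large buffer $n'=K\lfloor\log n\rfloor$ makes its probability negligible at the scale $t^{-\overline\alpha}/\sqrt{\log t}$.

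First I would turn each of the two events into a union of $O(\log t)$ single‑generation events. On $\{M_n>(1+\delta)at,\ \max_{j\le n}Z_j\le at\}$ there is $j\in\{n'-1,\dots,n\}$ with $X_j>(1+\delta)at\ge(1+\delta)Z_j$, hence $|X_j-Z_j|>\delta at$. On $\{M_n\le(1-\delta)at,\ \max_{j\le n}Z_j>at\}$ there is $j\le n$ with $Z_j>at$; if $j\ge n'$ then $X_j\le M_n\le(1-\delta)at<Z_j$ and again $|X_j-Z_j|>\delta at$, while the contribution of the generations $j<n'$ is bounded by $\sum_{j<n'}\PP[Z_j>at]\le c(at)^{-\alpha}\sum_{j<n'}\lambda(\alpha)^{j}$ via Proposition~\ref{prop:momzn}, which by the choice of $n'$ is of the required order. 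Thus it remains to estimate $\sum_{j=n'-1}^{n}\PP[\,|X_j-Z_j|>\delta at\,]$, uniformly in $a$.

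For fixed $j$ I would condition on $\mathcal F=\sigma(\mathcal Q,Z_{n'})$: since $X_j$ is $\mathcal F$‑measurable and $X_j-Z_j=\sum_{i=1}^{Z_{n'}}(\Pi_{n',j}-Z^{(i)}_{j-n'})$ is a centred conditionally i.i.d.\ sum, the randomly stopped sum / Marcinkiewicz--Zygmund bound underlying \eqref{eq:2} gives
$$\PP[\,|X_j-Z_j|>\delta at\mid\mathcal F\,]\ \le\ \frac{c\,Z_{n'}^{\alpha^*}\,\EE\big[\,|\Pi_{n',j}-Z^{(1)}_{j-n'}|^{\alpha}\ \big|\ \mathcal Q\,\big]}{(\delta at)^{\alpha}},\qquad \alpha^*=\alpha/2\vee1.$$
Averaging and using independence of $Z_{n'}$ from $A_{n'},\dots,A_{j-1}$, the bound $\EE[\,|\Pi_{n',j}-Z^{(1)}_{j-n'}|^{\alpha}\,]\le c\lambda(\alpha)^{\,j-n'}$ (Proposition~\ref{prop:momzn} in the shifted environment, since this equals $\EE[\,|Z_{j-n'}-\EE[Z_{j-n'}\mid\mathcal Q]|^{\alpha}\,]$), and crucially $\EE[Z_{n'}^{\alpha^*}]\le c\gamma^{\,n'}$ with $\gamma<\lambda(\alpha)$ (Corollary~\ref{cor:3}), the geometric sum over $j$ collapses to a bound of the shape
$$\sum_{j=n'-1}^{n}\PP[\,|X_j-Z_j|>\delta at\,]\ \le\ \frac{c}{\delta^{\alpha}}\Big(\frac{\gamma}{\lambda(\alpha)}\Big)^{\!n'}\lambda(\alpha)^{n}(at)^{-\alpha}\ \le\ \frac{c}{\delta^{\alpha}}\,(\log n)^{c}\,n^{-cK}\,a^{-\alpha}t^{-\overline\alpha},$$
using $\lambda(\alpha)^{n}t^{-\alpha}\asymp t^{-\overline\alpha}$ (which follows from $n=\lfloor\log t/\rho\rfloor$ and $\overline\alpha=\alpha-\Lambda(\alpha)/\rho$) and $(\gamma/\lambda(\alpha))^{n'}=(\log n)^{c}n^{-cK}$. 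Since $n\asymp\log t$, for $K$ large enough this is $o(a^{-\alpha}t^{-\overline\alpha}/\sqrt{\log t})$, and the estimate is uniform over $a\in[n^{-K},n^{K}]$ because the only $a$‑dependence is the explicit factor $a^{-\alpha}$.

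The step to watch is the last one. The crude estimate for $\PP[\,|X_j-Z_j|>\delta at\,]$ is only of order $t^{-\alpha}$, so after summing over the $\asymp\log t$ generations and dividing by $\sqrt{\log t}$ it must still be absorbed into $t^{-\overline\alpha}$; the whole argument therefore hinges on the decaying factor $(\gamma/\lambda(\alpha))^{n'}$, produced jointly by the reduced exponent $\alpha^*<\alpha$ in the Marcinkiewicz--Zygmund inequality and by the fact that the single‑step increments $A_{k-1}Z_{k-1}-Z_k$ are centred (Corollary~\ref{cor:3}). This is exactly why $n'$ is taken of logarithmic order, and checking that the resulting gain beats both $\sqrt{\log t}$ and the union‑bound loss, uniformly for $a$ as small as $n^{-K}$ (so that the threshold $at$ can be as small as $t/n^{K}$), is the only genuinely delicate part; the rest is bookkeeping.
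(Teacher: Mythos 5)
Your proposal is correct and follows essentially the same strategy as the paper: reduce the event to a union over $j$ of ``$X_j$ and $Z_j$ differ by $\gtrsim\delta at$'', control the $\alpha$-th moment of that difference via a Marcinkiewicz--Zygmund type bound plus Corollary~\ref{cor:3} (whose reduced exponent $\alpha^*$ yields the decisive factor $(\gamma/\lambda(\alpha))^{n'}$), and exploit $\lambda(\alpha)^n t^{-\alpha}\asymp t^{-\overline\alpha}$ together with the logarithmic buffer $n'=K\lfloor\log n\rfloor$.

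The one genuine difference is in how the difference $X_j-Z_j$ is controlled. The paper uses the telescopic identity \eqref{eq:12} to write $X_j-Z_j=\sum_{k=n'+1}^{j}(A_{k-1}Z_{k-1}-Z_k)\Pi_{k,j-1}$, then applies a $t/(2k^2)$ union bound and Markov on each increment, invoking Corollary~\ref{cor:3} for each $k$. You instead condition on $\mathcal F=\sigma(\mathcal Q, Z_{n'})$ and apply the Marcinkiewicz--Zygmund / randomly stopped sum bound once to $X_j-Z_j=\sum_{i=1}^{Z_{n'}}(\Pi_{n',j}-Z^{(i)}_{j-n'})$ as a sum of $Z_{n'}$ conditionally i.i.d.\ centred terms. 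Both routes yield the same geometric decay $\gamma^{n'}\lambda(\alpha)^{j-n'}$; your one-shot version avoids the polylogarithmic $k^{2\alpha}$ loss the union bound incurs, and is arguably cleaner, but conceptually they are the same estimate.

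One small caveat: your treatment of the generations $j<n'$ for the second claim via $\sum_{j<n'}\PP[Z_j>at]\le c(at)^{-\alpha}\sum_{j<n'}\lambda(\alpha)^j$ is fine when $\lambda(\alpha)>1$, but when $\lambda(\alpha)<1$ (which is in scope for the $\rho<\rho_0$ branch of Theorem~\ref{thm:mthm2}) one has $\overline\alpha>\alpha$, so $(at)^{-\alpha}$ is \emph{larger} than $a^{-\alpha}t^{-\overline\alpha}/\sqrt{\log t}$ and this Markov bound at level $\alpha$ is too weak; one needs to use a higher moment (e.g.\ $\alpha+\varepsilon$ or a moment above $\overline\alpha$) for those first $n'$ generations. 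To be fair, the paper only writes out the first claim and merely asserts the second is ``similar'', so it glosses over this point as well; but if you wish to spell out the second claim, this step deserves a separate treatment rather than being waved off as ``of the required order''.
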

\begin{proof} Since the arguments for both claims are similar, we prove here only the first part.
	Consider the following bound
	\begin{equation*}
		\PP\big[
M_{n}>(1+\delta)at, \: \max_{0\leq j \leq n}Z_j \leq at
\big]
\leq
		\sum_{j=n'}^{n}\PP\left[Z_j\leq at, \: Z_{n'}\Pi_{n',j} > (1+\delta)at  \right].
	\end{equation*}
 Combining  \eqref{eq:12}, the Markov inequality and Lemma~\ref{lem:s4} (with $\gamma < \lambda(\a)$) we obtain
\begin{equation}\label{eq:czw3}	
\begin{split}
		\PP\left[Z_j\leq at, \: Z_{n'}\Pi_{n',j} > (1+\delta)at  \right]
			& \leq \PP\left[\sum_{k=n'+1}^j (A_{k-1}Z_{k-1} - Z_k)\Pi_{k,j} > \delta at  \right]\\
			& \leq \sum_{k=n'+1}^j \PP\left[ (A_{k-1}Z_{k-1} - Z_k)\Pi_{k,j} > \frac{\delta at}{2k^2}  \right] \\
			& \leq \sum_{k=n'+1}^jc \gamma^k \lambda(\alpha)^{j-k} k^{2\alpha} \delta^{-\alpha} a^{-\a}t^{-\alpha}\\
	& \leq		c \varepsilon^{n'}\lambda(\alpha)^{j}  \delta^{-\alpha} a^{-\alpha}t^{-\alpha}.
	\end{split}
\end{equation} for some $\eps < 1$.

	If we sum over $n'\leq j \leq n-1$ and recall \eqref{eq:fenchel} we arrive at
	\begin{align*}
		\PP\big[
M_{n}>(1+\delta)at, \: \max_{0\leq j \leq n}Z_j \leq at
\big]
 			&\le  \sum_{j=n'}^{n}\PP\left[Z_j\leq at, \: Z_{n'}\Pi_{n',j} > (1+\delta)at  \right] \\
			& \leq \sum_{j=n'}^{n}c \varepsilon^{n'}\lambda(\alpha)^{j}  \delta^{-\alpha} a^{-\a}t^{-\alpha}\\
			&\leq  c \varepsilon^{n'}  \delta^{-\alpha} a^{-\a}t^{-
\Lambda^*(\rho)/\rho
} = o\left( \frac{a^{-\a}t^{-
\Lambda^*(\rho)/\rho
}}{\sqrt{\log t}} \right)
	\end{align*}
	if only $K$ is big enough.
\end{proof}

For $j\ge n'$ define $X_j=Z_{n'} \Pi_{n',j}$.

\begin{lem}\label{lem:burmas1}
	Let $L$ and $N$ be two integers such that  $L \geq 1$ and  $-1 \leq N \leq L$ and let $K$ be a fixed constant.  Then for
	$\beta < \alpha$ such that $\lambda(\beta) < \lambda(\alpha)$ and sufficiently large $t$,
	\begin{equation*}
 \sup_{n^{-K} \le a,b\le n^K} b^\b a^{\a-\b}		
 \PP\left[M_{n - L} > at,\: X_{n-N} > b t \right]
		\leq c(\alpha,\beta) \lambda(\alpha)^{-L} \lambda(\beta)^{L - N}
		\frac{t^{-\Lambda^*(\rho)/\rho}}{\sqrt{\log t}}.
	\end{equation*}
\end{lem}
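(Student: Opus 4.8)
The plan is to condition on the value $Z_{n'}$ and reduce the statement to a pure random-walk estimate for the products $\{\Pi_{n',j}\}$, then apply a union bound over the location of the running maximum together with the Markov/Chebyshev inequality at the exponent $\alpha$ for late times and at the exponent $\beta$ for the maximum up to time $n-L$. Recall $X_j = Z_{n'}\Pi_{n',j}$ and $M_{n-L} = \max_{n'-1 \le j \le n-L} X_j$. Since $Z_{n'}$ is independent of $\{\Pi_{n',j}\}_{j \ge n'}$ and satisfies $\EE[Z_{n'}^{\gamma}] \le c\lambda(\gamma)^{n'}$ with $n' = K\lfloor \log n\rfloor$ (Proposition \ref{prop:momzn} and Corollary \ref{cor:3.3}), any factor of $Z_{n'}^{\gamma}$ produced along the way contributes only a polylogarithmic factor $n^{c}$, which is negligible against $t^{-\overline\alpha}$ after the optimization; I will absorb such factors into $c(\alpha,\beta)$ throughout.

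First I would split $\{M_{n-L} > at,\ X_{n-N} > bt\}$ according to the first index $j \in \{n'-1,\dots,n-L\}$ with $X_j > at$. On that event $\Pi_{n',j} > at/Z_{n'}$ and, independently, $\Pi_{j,n-N} = \Pi_{n',n-N}/\Pi_{n',j} > (bt)/(at) \cdot$ (a harmless ratio coming from overlapping indices, handled by the convention $\Pi_{n'-1,n'}=1$ and a one-step correction), so $\PP[M_{n-L}>at,\,X_{n-N}>bt \mid Z_{n'}]$ is bounded by $\sum_j \PP[\Pi_{n',j} > at/Z_{n'}]\,\PP[\Pi_{j,n-N} > (b/a)]$. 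For the first factor I apply the Markov inequality with exponent $\beta$ (legitimate since $\alpha_{min} < \beta < \alpha < \alpha_\8$, so $\lambda(\beta)<\infty$), giving $\lambda(\beta)^{\,j-n'}(at/Z_{n'})^{-\beta}$; for the second I apply Markov with exponent $\alpha$, giving $\lambda(\alpha)^{\,n-N-j}(b/a)^{-\alpha}$. Taking the product, the dependence on $j$ is $(\lambda(\beta)/\lambda(\alpha))^{j}$; since $\beta<\alpha$ lies to the right of $\alpha_{min}$ we have $\lambda(\beta) < \lambda(\alpha)$, so the geometric sum over $n' \le j \le n-L$ is dominated by its smallest-index term, up to a constant, contributing $\lambda(\beta)^{-n'}\lambda(\alpha)^{n'}$ times the value at $j = n-L$, i.e. $\lambda(\beta)^{\,n-L-n'}\lambda(\alpha)^{\,n-N-(n-L)}\cdot\lambda(\beta)^{-n'}\lambda(\alpha)^{n'}$ — wait, more carefully: the sum equals a constant times the $j=n-L$ term, which is $\lambda(\beta)^{\,n-L-n'}\lambda(\alpha)^{\,L-N}$, multiplied by $Z_{n'}^{\beta}a^{-\beta}a^{\alpha-?}$; collecting powers of $a,b$ one gets exactly $b^{-\alpha}a^{\alpha-\beta}$ inside, so multiplying through by $b^{\beta}a^{\alpha-\beta}$ as in the statement leaves $b^{\beta-\alpha}a^{2(\alpha-\beta)}\le n^{c}$ over the range $a,b\in[n^{-K},n^{K}]$, again polylog. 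The surviving exponential is $\lambda(\beta)^{\,n-L-n'}\lambda(\alpha)^{\,L-N}\cdot t^{-\beta}\cdot(\text{powers absorbed})$.

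It remains to reconcile the $t$-power with the claimed $t^{-\overline\alpha}/\sqrt{\log t}$. Here I use $n = \lfloor \rho^{-1}\log t\rfloor$, so $e^{-n\Lambda(\alpha)} \asymp t^{-\Lambda(\alpha)/\rho} = t^{\,\alpha - \overline\alpha}$ (using $\overline\alpha = \alpha - \Lambda(\alpha)/\Lambda'(\alpha)$ and $\rho = \Lambda'(\alpha)$), hence $\lambda(\alpha)^{n} t^{-\alpha} \asymp t^{-\overline\alpha}$; similarly $\lambda(\beta)^{n}t^{-\beta}$ is, by convexity of $\Lambda$ and the fact that the tangent line at $\alpha$ lies below $\Lambda$, strictly smaller, namely $\le c\,t^{-\overline\alpha}\cdot e^{-c'n} \le c\,t^{-\overline\alpha}n^{-100}$, which easily beats the $1/\sqrt{\log t}$ we need. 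Writing $\lambda(\beta)^{\,n-L-n'}\lambda(\alpha)^{\,L-N} = \lambda(\alpha)^{-L}\lambda(\beta)^{\,L-N}\cdot\lambda(\beta)^{\,n-n'}\lambda(\alpha)^{L-N}/\lambda(\beta)^{L-N}\cdots$ — the bookkeeping rearranges to pull out exactly $\lambda(\alpha)^{-L}\lambda(\beta)^{L-N}$ as a prefactor and leaves $\lambda(\beta)^{\,n}t^{-\beta}$ (up to the polylog $n^{c}$ from $Z_{n'}$ and from the $a,b$ powers), which is $o(t^{-\overline\alpha}/\sqrt{\log t})$; in particular it is $\le c\,t^{-\overline\alpha}/\sqrt{\log t}$ for large $t$. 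This gives the asserted bound uniformly in $a,b$ over the stated range.

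The main obstacle is the bookkeeping in the last paragraph: one must arrange the split of the exponent so that $\lambda(\alpha)^{-L}\lambda(\beta)^{L-N}$ emerges cleanly as the $L,N$-dependent prefactor while the residual $t$-dependent piece is shown to be $O(t^{-\overline\alpha}/\sqrt{\log t})$ uniformly — the strict inequality $\lambda(\beta)<\lambda(\alpha)$ (from $\alpha_{min}<\beta<\alpha$) is what makes the geometric sum collapse to its boundary term and simultaneously gives room for the $\sqrt{\log t}$ and the polylogarithmic corrections; getting the constant $c(\alpha,\beta)$ to be genuinely independent of $L$ and $N$ (it is, since $L-N \ge 0$ and the $j$-sum constant depends only on $\lambda(\beta)/\lambda(\alpha)$) requires a little care but no new idea.
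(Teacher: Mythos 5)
Your exponent bookkeeping is inverted relative to what the lemma requires, and even after repairing it the plan still has a genuine gap: a purely Markov argument cannot produce the $1/\sqrt{\log t}$ factor in the claim.

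Take the exponent assignment first. You apply Markov with exponent $\beta$ to $\PP[\Pi_{n',j}>at/Z_{n'}]$ and exponent $\alpha$ to $\PP[\Pi_{j+1,n-N}>b/a]$; the paper does the opposite — exponent $\alpha$ on $\PP[X_j>\cdot]$ and exponent $\beta$ on $\PP[\Pi_{j+1,n-N}>\cdot]$ (see Step~1 of its proof). With your choice the product is of order $\lambda(\beta)^{j}\lambda(\alpha)^{n-N-j}(at)^{-\beta}(b/a)^{-\alpha}$, whose $j$-dependence is $(\lambda(\beta)/\lambda(\alpha))^{j}<1$; so, as you yourself first note, the geometric sum over $n'\le j\le n-L$ is governed by the \emph{smallest} index $j=n'$, not by $j=n-L$, and your subsequent assertion that "the sum equals a constant times the $j=n-L$ term" contradicts this. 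Tracking the dominant $j=n'$ term gives a bound of order $\lambda(\alpha)^{n-N}t^{-\beta}$ up to polylogs; since $\lambda(\alpha)^{n}\asymp t^{\alpha-\overline\alpha}$ (because $\Lambda(\alpha)/\rho=\alpha-\overline\alpha$), this is $\asymp t^{\alpha-\beta-\overline\alpha}=t^{\alpha-\beta}\cdot t^{-\overline\alpha}$, which \emph{diverges} relative to $t^{-\overline\alpha}/\sqrt{\log t}$ since $\alpha>\beta$; no polylogarithmic corrections from $Z_{n'}$ or from the $a,b$ ranges can compensate a positive polynomial power of $t$. Moreover, your convexity step is backwards: the tangent line to $\Lambda$ at $\alpha$ lying below the graph means $\Lambda(\beta)\ge\Lambda(\alpha)+\rho(\beta-\alpha)$, hence $\Lambda(\beta)-\beta\rho\ge -\rho\overline\alpha$ and $\lambda(\beta)^{n}t^{-\beta}\ge t^{-\overline\alpha}$ (equivalently $\beta\rho-\Lambda(\beta)\le\Lambda^{*}(\rho)=\rho\overline\alpha$), so the "residual" you appeal to is \emph{larger} than $t^{-\overline\alpha}$, not smaller, and in any case is not what your sum produces.

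Even after swapping the exponents to the paper's assignment, the Markov bound $\lambda(\alpha)^{j}\lambda(\beta)^{n-N-j}t^{-\alpha}$ is increasing in $j$, so the sum over $j\le n-L$ is governed by $j=n-L$ and gives $\asymp\lambda(\alpha)^{-L}\lambda(\beta)^{L-N}t^{-\overline\alpha}$ — the right power of $t$ and the right $L,N$-prefactor, but missing the factor $1/\sqrt{\log t}$. Closing that gap is precisely why the paper's proof splits the range of $j$: for $j\le n_{1}=n-Q\log n-L$ (Step~1) the geometric tail of the Markov sum supplies an extra $\delta^{Q\log n}=o(1/\log t)$ with $\delta=\lambda(\beta)/\lambda(\alpha)$; for $j\in(n_{1},n-L]$ (Step~2) the Markov estimate on $\PP[X_{j}>\cdot]$ is replaced by the precise Bahadur--Rao/Petrov asymptotics of Lemma~\ref{lem: petrov1}, which carry the $1/\sqrt{n}\asymp 1/\sqrt{\log t}$ factor automatically (with the excursion to very large $X_j$ handled separately via the parameter $B$). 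Your proposal has neither the split of the $j$-range nor the appeal to Petrov, and therefore cannot deliver the stated bound.
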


\begin{proof}
	We may write
	\begin{equation*}
		\PP\left[M_{n - L} > at, \: X_{n-N} > b t \right]  \leq
			\sum_{j=n'-1}^{n -L} \PP\left[ X_j > a t, \: X_{n-N} > bt \right].
	\end{equation*}
	We will bound the right-hand side of above inequality term by term. To do so, we will need to distinguish between big and
	small values of $j$. Put $\delta = \frac{\lambda(\beta)}{\lambda(\alpha)} < 1$. We will compare $n-j-L$ with
	$Q \log n$ for some
	integer $Q$ such that $\delta^{- Q \log n} > \log t$. \medskip

\noindent
{\sc Step 1.} First we present a bound for  $j \leq \lfloor n - Q \log n-L \rfloor = n_1$. Since $X_{n-N} = X_j \Pi_{j,n-N}$ for $j \leq n-L$, we can write
	\begin{align*}
		\PP\big[ X_{j} > a t, \: X_{n-N} > bt \big]
 		& =  \sum_{m=0}^{\infty}
			\PP\big[e^{m+1}at \geq X_j > e^mat, \: X_{n-N} > bt\big ]\\
 		 &\leq \sum_{m=0}^{\infty}
			\PP\big[ X_j > e^mat, \: \Pi_{j,n-N} > {b}/{a} e^{-m-1} \big] \\
 		& \leq  \sum_{m=0}^{\infty} \PP\left[X_j > e^mat \right]\:
			\PP \left[ \Pi_{j,n-N} > {b}/{a} e^{-m-1}\right] \\
  		& \leq \sum_{m=0}^{\infty}c \lambda(\alpha)^j e^{-\alpha m} a^{-\alpha}t^{-\alpha}
			\lambda(\beta)^{n-N-j}b^{-\beta} a^{\beta}e^{\beta(m+1)}\\
  		& \leq c(\alpha, \beta)  b^{-\beta} a^{\beta -\alpha} \delta^{n-j-L} t^{-\Lambda^*(\rho)/\rho}
			\lambda(\alpha)^{-L} \lambda(\beta)^{L-N}.
	\end{align*}
	If we now sum over $j\leq \lfloor n - Q \log n-L \rfloor = n_1$, we will arrive at
	\begin{multline*}
		 \sum_{0 \leq  j \leq n_1} \PP\left[ X_{j} > a t, \: X_{n-N} > bt \right]
		 \leq c \sum_{0 \leq  j \leq n_1}b^{-\beta} a^{\beta -\alpha} \delta^{n-j-L} t^{-\Lambda^*(\rho)/\rho}
			\lambda(\alpha)^{-L} \lambda(\beta)^{L-N} \\
			\leq c b^{-\beta} a^{\beta -\alpha} \delta^{Q \log n} t^{-\Lambda^*(\rho)/\rho}
			\lambda(\alpha)^{-L} \lambda(\beta)^{L-N}
  		\leq  c b^{-\beta} a^{\beta -\alpha}  \frac{t^{-\Lambda^*(\rho)/\rho}}{\log t}
			\lambda(\alpha)^{-L} \lambda(\beta)^{L-N}.
	\end{multline*}

\noindent
{\sc Step 2.} Now we give a bound for $ j > \lfloor n - Q \log n-L \rfloor = n_1$. Take $B>0$ to be any constant such that  $-\alpha B +1 < 0$, whenever
	$\alpha \geq \alpha_{0}$ and $-\alpha B +1 - \Lambda(\alpha)Q < 0$ for $\alpha < \alpha_{0}$. Consider the
	decomposition
	\begin{align*}
		\PP\left[ X_j > at, \: X_{n-N} > bt \right] & \leq
			\PP\left[ X_j > a te^{B \log n}, \: X_{n-N}>bt \right] \\
			& + \PP\left[ at e^{B \log n}\geq X_j >at, \: X_{n-N} > bt \right] = I_1 + I_2.
	\end{align*}
	Invoke  Markov's inequality in order to bound the first term viz.
	\begin{align*}
		I_1&   \leq \PP\left[ X_j \geq at e^{B \log n} \right]
			\leq  c t^{-\alpha} a^{-\alpha}n^{-\alpha B} \lambda(\alpha)^{j}
			=  ct^{-\Lambda^*(\rho)/\rho}a^{-\alpha} n^{-\alpha B} \lambda(\alpha)^{j-n}\\
		&  \leq c t^{-\Lambda^*(\rho)/\rho} a^{-\alpha} \frac{1}{\log t} n^{-\alpha B + 1}  \lambda(\alpha)^{-n+j+L} \lambda(\alpha)^{-L}
 			\leq ca^{-\alpha} t^{-\Lambda^*(\rho)/\rho} \frac{1}{ \log t} \lambda(\alpha)^{-L}.
	\end{align*}
	Turning our attention to the second term $I_2$ we apply Lemma \ref{lem: petrov1}, which gives the uniform estimates,
combined with the same
	procedure as the one used in the previous step.
	\begin{align*}
		I_2  =\PP\big[ at e^{B \log n}\geq X_j >at, & \:X_{n-N} > bt \big]
		 \leq \sum_{m=0}^{\left \lceil{B \log  n - 1}\right \rceil }
			\PP\left[at e^{m+1} \geq X_j > at e^m, \:X_{n-N} > bt\right]\\
		& \leq \sum_{m=0}^{\left \lceil{B \log  n - 1}\right \rceil } \PP\left[X_j > ate^m \right]\:
			\PP \left[ \Pi_{j, n - N} > \frac{b}{a}e^{-m-1}\right]\\
		& \leq \sum_{m=0}^{\left \lceil{B \log  n - 1}\right \rceil }
			 c \frac{t^{-\Lambda^*(\rho)/\rho}}{\sqrt{\log t}} \lambda(\alpha)^{j-n} a^{-\alpha}e^{-\alpha m}
			b^{-\beta}a^{\beta} e^{\beta (m+1)} \lambda(\beta)^{n-j-N} \\
		& \leq \sum_{m=0}^{\left \lceil{B \log  n - 1}\right \rceil }
			c b^{-\beta}a^{\beta-\alpha}\frac{t^{-\Lambda^*(\rho)/\rho}}{\sqrt{\log t}}
 			e^{(\beta-\alpha) m} \delta^{n-j-L}   \lambda(\alpha)^{-L} \lambda(\beta)^{L-N}  \\
		& \leq  c b^{-\beta}a^{\beta-\alpha} \frac{t^{-\Lambda^*(\rho)/\rho}}{\sqrt{\log t}}  \delta^{n-j - L}
			\lambda(\alpha)^{-L} \lambda(\beta)^{L-M}.
	\end{align*}
	Now combine bound for $I_1$ and $I_2$ to get
	\begin{equation*}
  		 \PP\left[ X_j > at, \: X_{n-N} > bt \right]
 		 \leq  c a^{-\alpha} t^{-\Lambda^*(\rho)/\rho} \frac{1}{ \log t} \lambda(\alpha)^{-L}
		+   c b^{-\beta}a^{\beta-\alpha} \frac{t^{-\Lambda^*(\rho)/\rho}}{\sqrt{\log t}}  \delta^{n-j-L}
			\lambda(\alpha)^{-L} \lambda(\beta)^{L-N}.
	\end{equation*}
	Summing over $n-L \geq j > n- Q \log n-L$ establishes a bound sufficient for our needs
	\begin{multline*}
		\sum_{ n-L \geq j \geq n_1 }  \PP\left[ X_j > at, \: X_{n-N} > bt \right]\\
		\leq  ca^{-\alpha} t^{-\Lambda^*(\rho)/\rho} \frac{\log n}{ \log t} \lambda(\alpha)^{-L}
		 +   cb^{-\beta}a^{\beta-\alpha} \frac{t^{-\Lambda^*(\rho)/\rho}}{\sqrt{\log t}}
			\lambda(\alpha)^{-L} \lambda(\beta)^{L-N}.
	\end{multline*}
\noindent
{\sc Step 3.} Lastly, combine the claims of previous steps to derive that for sufficiently large $t$,
	\begin{align*}
		\PP\left[M_{n - L} > at,\: X_{n-N} > b t \right]  & \leq
			c b^{-\beta}a^{\beta-\alpha}  \frac{t^{-\Lambda^*(\rho)/\rho}}{\log t} \lambda(\alpha)^{-L} \lambda(\beta)^{L-N} \\
			& +c a^{-\alpha} t^{-\Lambda^*(\rho)/\rho} \frac{\log n}{ \log t} \lambda(\alpha)^{-L}
		+   c b^{-\beta}a^{\beta-\alpha} \frac{t^{-\Lambda^*(\rho)/\rho}}{\sqrt{\log t}}
			\lambda(\alpha)^{-L} \lambda(\beta)^{L-N}\\
		& \leq  c  b^{-\beta}a^{\beta-\alpha} \lambda(\alpha)^{-L} \lambda(\beta)^{L - N}   \frac{t^{-\Lambda^*(\rho)/\rho}}{\sqrt{\log t}}.
	\end{align*}
This completes the proof.
\end{proof}

\subsection{Lower and upper estimates}
We will focus our attention on establishing that first passage time for $\{X_j \}_{j \geq n'}$ is of the correct order.

\begin{prop}\label{prop:weakasymp}
For any constant $K$ there is a positive constant $c$ such that for
$t$ big enough one has
	\begin{equation}\label{eq:weakasymp}
		\frac{1}{c}\frac{t^{-\Lambda^*(\rho)/\rho}}{\sqrt{\log t}} \leq
		a^\a \PP\left[M_{n-1} \leq at, \: X_{n} > at \right] \leq
		c \frac{t^{-\Lambda^*(\rho)/\rho}}{\sqrt{\log t}}
	\end{equation}
uniformly for $a\in [n^{-K}, n^K]$.
\end{prop}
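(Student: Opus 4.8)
The plan is to prove the two inequalities in \eqref{eq:weakasymp} by rather different means: the upper bound follows immediately from the sharp asymptotics already at our disposal, whereas the lower bound carries the real content.

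\emph{Upper bound.} Since $\{M_{n-1}\le at,\ X_n>at\}\subseteq\{X_n>at\}$ and $X_n=Z_{n'}\Pi_{n',n}$ with $n'=K\lfloor\log n\rfloor=O(\log n)$, I would apply Lemma~\ref{lem:branchPetrov} with $H_n=Z_n$ — whose hypotheses \eqref{eq:star} and the polynomial moment bound are furnished by Proposition~\ref{prop:momzn} and Corollary~\ref{cor:3.3} — taking $j_n=n'$ and writing $at=e^{n\rho+n\delta_n}$, so that $n\delta_n=\rho\Theta(t)+\log a=O(\log n)$ and hence $n^{1/2}|\delta_n|\to 0$ uniformly for $a\in[n^{-K},n^K]$. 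Using $\Lambda^*(\rho)=\rho\overline\alpha$, $\log t=\rho n+\rho\Theta(t)$ and $\overline\alpha-\alpha=-\Lambda(\alpha)/\rho$ to rewrite $e^{-n\Lambda^*(\rho)-\alpha n\delta_n}=a^{-\alpha}\lambda(\alpha)^{-\Theta(t)}t^{-\overline\alpha}$, together with $\sqrt n\sim\sqrt{\log t/\rho}$, one gets that $a^\alpha\PP[X_n>at]$ is, uniformly in $a$, comparable to a fixed multiple of $\lambda(\alpha)^{-\Theta(t)}\,t^{-\overline\alpha}/\sqrt{\log t}$, and in particular bounded above and below by fixed multiples of $t^{-\overline\alpha}/\sqrt{\log t}$. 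The right-hand inequality in \eqref{eq:weakasymp} is then immediate, and it records the asymptotic shape the lower bound must reproduce.

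\emph{Lower bound.} Here I would start from the identity
$$
\PP[M_{n-1}\le at,\ X_n>at]=\PP[X_n>at]-\PP[M_{n-1}>at,\ X_n>at],
$$
and try to show the subtracted term is at most a fixed fraction of $\PP[X_n>at]$, uniformly in $a\in[n^{-K},n^K]$ and for $t$ large. Writing $\{M_{n-1}>at\}\subseteq\bigcup_{j=n'-1}^{n-1}\{X_j>at\}$ and splitting at $j=n-L$ for a large constant $L$: for $j\le n-L$, Lemma~\ref{lem:burmas1} (with $N=0$, $b=a$, and a fixed $\beta\in(\alpha_{min},\alpha)$) gives $a^\alpha\PP[M_{n-L}>at,\ X_n>at]\le c\,(\lambda(\beta)/\lambda(\alpha))^{L}\,t^{-\overline\alpha}/\sqrt{\log t}$, which is $\le\tfrac14 a^\alpha\PP[X_n>at]$ once $L$ is large, since $\lambda(\beta)<\lambda(\alpha)$. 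For the finitely many $j$ with $n-L<j\le n-1$ one uses the independence $X_n=X_j\cdot\Pi_{j+1,n}$, $\Pi_{j+1,n}\perp X_j$, and the fact that on $\{M_{j-1}\le at,\ X_j>at\}$ the event $\{X_n>at\}$ forces $\Pi_{j+1,n}>at/X_j$; a dyadic decomposition of $X_j$ above $at$, the Petrov-type asymptotics for $\PP[X_j>s]$ (Lemma~\ref{lem:branchPetrov} with $n$ replaced by $j=n-O(1)$, whose product still has $\sim n$ factors, so the $1/\sqrt{\log t}$ survives), and the geometric smallness of $\PP[\Pi_{j+1,n}>c]$ under \eqref{eq:h1} should keep each such contribution, and their sum, below $\tfrac14 a^\alpha\PP[X_n>at]$. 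Combining, $\PP[M_{n-1}>at,\ X_n>at]\le\tfrac12\PP[X_n>at]$, which yields the left-hand inequality. A more robust fallback for the near range is the elementary bound $\PP[M_{n-1}\le at,\ X_n>at]\ge\PP[A>1+\varepsilon_0]\,\PP[M_{n-1}\le at,\ X_{n-1}>at/(1+\varepsilon_0)]$ — valid since $\alpha>1$ together with $\lambda(\alpha)>\lambda(1)$ forces $\PP[A>1+\varepsilon_0]>0$ for some $\varepsilon_0>0$ — reducing to the same kind of estimate one time step down.

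\emph{Main obstacle.} The delicate point is the near range $n-L<j<n$ in the lower bound. Because $\lambda(\alpha)$ may be $<1$ here (this is exactly the regime $\rho<\rho_0$), the crude comparison $\PP[X_j>at]\asymp\lambda(\alpha)^{j-n}\PP[X_n>at]$ gives, for these $j$, something \emph{larger} than $\PP[X_n>at]$, so one cannot afford to bound $\PP[\tau=j,\ X_n>at]$ by $\PP[X_j>at]$. One must genuinely exploit that crossing the level strictly before time $n$ and still being above it at time $n$ entails an additional, geometrically costly (in $n-j$) excursion against the negative drift — essentially a two-parameter large-deviation optimisation — and then verify that the resulting geometric series, constants included, sums to strictly less than $\PP[X_n>at]$. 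Making these constants explicit (or replacing this step by an appropriate renewal-type identity) is where the effort goes.
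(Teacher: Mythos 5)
Your upper bound is fine: the inclusion $\{M_{n-1}\le at,\ X_n>at\}\subseteq\{X_n>at\}$ together with Lemma~\ref{lem:branchPetrov} applied to $X_n=Z_{n'}\Pi_{n',n}$ (with $j_n=n'=O(\log n)$ and $n\delta_n=\log a+\rho\Theta(t)$) gives $a^\alpha\PP[X_n>at]\asymp t^{-\overline\alpha}/\sqrt{\log t}$ uniformly in $a\in[n^{-K},n^K]$. This is the same content as the paper's, whose reference to Lemma~\ref{lem:burmas1} at that point is at best indirect.

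The lower bound is where the argument genuinely does not close, and you say so yourself. Your plan is to prove $\PP[M_{n-1}>at,\ X_n>at]\le(1-\epsilon)\PP[X_n>at]$ and subtract. Lemma~\ref{lem:burmas1} controls the far range $j\le n-L$ because the factor $(\lambda(\beta)/\lambda(\alpha))^L$ can be made as small as one likes; but for $L=1$ the lemma only gives a bound of the form $c(\alpha,\beta)\,a^{-\alpha}t^{-\overline\alpha}/\sqrt{\log t}$ with an untracked constant $c(\alpha,\beta)$, which need not be smaller than the Petrov constant for $\PP[X_n>at]$, so no fraction $<1$ is produced. For the remaining $L$ near-range indices you sketch the ``extra excursion against the drift'' heuristic but never make it quantitative, and the fallback inequality $\PP[M_{n-1}\le at,\ X_n>at]\ge\PP[A>1+\varepsilon_0]\,\PP[M_{n-2}\le at,\ X_{n-1}>at/(1+\varepsilon_0)]$ merely restates the target one index earlier: it is circular, not a reduction.

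The paper avoids this difficulty by not decomposing along the first crossing time at all. It introduces two independent events, $\mathcal{A}$ on $A_{n-L},\dots,A_n$ forcing $\max_{n-L\le j\le n-1}\Pi_{n-L,j}\le r\gamma^{-1}$ while $\Pi_{n-L,n}>\gamma^{-1}$, and $\mathcal{B}$ on $Z_{n'},A_{n'},\dots,A_{n-L-1}$ forcing $M_{n-L-1}\le at$ while $X_{n-L-1}\in[\gamma at,\gamma r^{-1}at)$; then $\mathcal{A}\cap\mathcal{B}\subseteq\{M_{n-1}\le at,\ X_n>at\}$, with independence giving $\PP[\mathcal{A}]\PP[\mathcal{B}]$ as a lower bound. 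The decisive step is the estimate of $\PP[\mathcal{B}]$: it is $\PP[X_{n-L-1}\in[\gamma at,\gamma r^{-1}at)]-\PP[M_{n-L-1}>at,\ X_{n-L-1}\ge\gamma at]$, where by Lemma~\ref{lem:branchPetrov} the first term is $\sim c_1(\alpha,r)\gamma^{-\alpha}\lambda(\alpha)^{-L}a^{-\alpha}t^{-\overline\alpha}/\sqrt{\log t}$ while by Lemma~\ref{lem:burmas1} the second is $\le c_2(\alpha,\beta)\gamma^{-\beta}\lambda(\alpha)^{-L}a^{-\alpha}t^{-\overline\alpha}/\sqrt{\log t}$ with $\beta<\alpha$; choosing $\gamma$ small makes $\gamma^{-\alpha}$ dominate $\gamma^{-\beta}$ regardless of the unknown constants, and $\PP[\mathcal{A}]>0$ for a suitable $L$ is checked directly. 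This extra scaling parameter $\gamma$, which cleanly separates the two probabilities' constants without tracking them, is exactly what your subtractive scheme lacks.
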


\begin{proof}
	Firstly, note that the upper bound in~\eqref{eq:weakasymp} follows by

invoking Lemma \ref{lem:burmas1} 
	\begin{equation*}
		\PP\left[M_{n-1} \leq at, \: X_n > at \right] \leq  c \frac{a^{-\alpha}t^{-\Lambda^*(\rho)/\rho}}{\sqrt{\log t}}.
	\end{equation*}

	To establish the lower bound, we denote for non-negative integer $L$ and any pair of positive reals $ 0  < \gamma < r <1$
	\begin{equation*}
		\mathcal{A}=\mathcal{A}(r,\gamma, L) = \left\{  \max_{n-L \leq j \leq  n-1}\Pi_{n-L,j} \leq r\gamma^{-1}, \: \Pi_{n-L,n} > \gamma^{-1}  \right\}
	\end{equation*}
	and
	\begin{equation*}
		\mathcal{B}=\mathcal{B}(r,\gamma, L) = \left\{  M_{n-L-1} \leq at, \: \gamma a t \leq X_{n-L-1} < \gamma r^{-1} at  \right\}.
	\end{equation*}
	Then, by a direct calculations, it can be easily verified that
	\begin{equation*}
		\mathcal{A} \cap \mathcal{B} \subseteq \left\{ M_{n-1} \leq at, \: X_n > at  \right\}.
	\end{equation*}
	By independence of $\mathcal{A}$ and $\mathcal{B}$ we are allowed to treat probabilities of respective events separately.
	To bound $\PP[\mathcal{B}]$ note that
	\begin{equation*}
		\PP[\mathcal{B}]  = \PP[\gamma at \leq X_{n-L-1} < \gamma r^{-1} at ]
			- \PP\left[M_{n-L-1} > at, \: \gamma at \leq X_{n-L+1} < \gamma r^{-1} at \right].
	\end{equation*}
	The first probability, by Lemma~\ref{lem:branchPetrov} exhibits the following asymptotic behaviour
	\begin{equation*}
		\PP[\gamma at \leq X_{n-L-1} < \gamma r^{-1} at ] \sim c_{1}(\alpha, r) \gamma^{-\alpha}\lambda(\alpha)^{-L}
			\frac{a^{-\alpha}t^{-\Lambda^*(\rho)/\rho}}{\sqrt{\log t}}
	\end{equation*}
	while asymptotic of the second probability can be bounded by another appeal to Lemma~\ref{lem:burmas1} with $N=L$
	\begin{align*}
		\PP\left[M_{n-L-1} > at, \: \gamma a t \leq X_{n-L} < \gamma r^{-1} at \right] &\leq
		\PP\left[M_{n-L-1} > at, \:  X_{n-L} > \gamma  at \right]\\ &\le c_{2}(\alpha, \beta) \gamma^{-\beta}\lambda(\alpha)^{-L}
			\frac{a^{-\alpha}t^{-\Lambda^*(\rho)/\rho}}{\sqrt{\log t}}.
	\end{align*}
	If we put everything together, we will arrive at the conclusion that, uniformly in $a \in [n^{-K}, n^K]$
	\begin{equation*}
		\PP \left[ M_{n-1} \leq a t, \: X_n > a t  \right] \geq \PP[\mathcal{A}]a^{-\alpha} \left( c_{1}(\alpha, r) \gamma^{-\alpha} - c_{2}(\alpha,\beta)\gamma^{-\beta}+o(1)\right)\lambda(\alpha)^{-L}
			\frac{t^{-\Lambda^*(\rho)/\rho}}{\sqrt{\log t}}.
	\end{equation*}
	We need to ensure, that for a proper choice of $\gamma, r$ and $L$,
	\begin{equation*}
		\PP[\mathcal{A}] \left( c_{1}(\alpha, r) \gamma^{-\alpha} - c_{2}(\alpha,\beta)\gamma^{-\beta}\right) > 0.
	\end{equation*}
	To do so, first take $r$ such that $\PP[A>r^{-2}]>0$ and then take $\gamma$ sufficiently small such that
	\begin{equation*}
		c_{1}(\alpha, r) \gamma^{-\alpha} - c_{2}(\alpha,\beta)\gamma^{-\beta}>0\quad \mbox{and} \quad \gamma < r^2
	\end{equation*}
	Finally choose $L$ such that 
(changing $\gamma$ if necessary)
	$\PP[ r^2 \gamma^{-1} < A^L <  r \gamma^{-1}]>0$. The constants chosen in this way allow us to write
	\begin{align*}
\PP[{\mathcal A}] &= \PP \Big[ \max_{1\le j \le L-1}   \Pi_j  \le r\gamma^{-1}, \Pi_L > \gamma^{-1}  \Big]\\
&\ge \PP\Big[ A_L > r^{-2}, \Pi_{L} = \max_{1\le j \le L-1} \Pi_j, r^2\gamma^{-1} < \Pi_{L} < r \gamma^{-1} \Big]\\
&\ge \PP\Big[ A_L > r^{-2}, \Pi_{L} \ge \Pi_{L-2}\ge \ldots \ge \Pi_1, r^2\gamma^{-1} < \Pi_{L} < r \gamma^{-1} \Big]\\
&\ge \PP\big[ A_L > r^{-2}\big] \prod_{i=1}^{L-1} \PP \big[ r^2\gamma^{-1} < A_{i}^L < r \gamma^{-1} \big]\\
&>0.
	\end{align*}
\end{proof}

\subsection{Conclusions}
\begin{lem}\label{lem:concl}
	We  have
	\begin{equation*}
		 \lim_{t \to \infty} a^{\alpha} t^{\Lambda^*(\rho)/\rho} \sqrt{\log t}\PP \left[ M_{n-1 } \leq a t, \: X_{n} > a t  \right] = \cC_4(\rho)
	\end{equation*}
	uniformly for $a\in [n^{-K}, n^K]$, where $K$ is a fixed constant.
\end{lem}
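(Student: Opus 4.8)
The plan is to condition on the initial value $Z_{n'}$ and reduce the statement to a (perturbed) first passage problem for the random walk $S_m=\log\Pi_{n',\,n'+m-1}=\sum_{i=1}^m\log A_{n'+i-1}$, $S_0=0$, whose increments are i.i.d.\ copies of $\log A$. Since $Z_{n'}$ is independent of the environment $\{A_j\}_{j\ge n'}$, on $\{Z_{n'}=z\}$ we have $X_j=z\,e^{S_{j-n'+1}}$, and with $\ell=n-n'+1$ and $b=b(z)=\log(at/z)$ the event $\{M_{n-1}\le at,\ X_n>at\}$ is precisely $\{\tau_b=\ell\}$, where $\tau_b=\inf\{m\ge 0:S_m>b\}$. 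The auxiliary event $\{Z_{n'}>at\}$ has probability at most $\lambda(\a)^{n'}(at)^{-\a}$ by Markov's inequality, hence is negligible after multiplication by $a^{\a}t^{\overline\a}\sqrt{\log t}$, so that
\begin{equation*}
	\PP\big[M_{n-1}\le at,\ X_n>at\big]=\EE\big[\PP\big[\tau_{b(Z_{n'})}=\ell\big]\big]+o\Big(a^{-\a}t^{-\overline\a}(\log t)^{-1/2}\Big),
\end{equation*}
the inner probability referring to the law of $\{S_m\}$.

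Next I would apply the Cram\'er change of measure with parameter $\a$: under $\widetilde\PP$ given by $\ud\widetilde\PP/\ud\PP=e^{\a S_\ell-\ell\Lambda(\a)}$ on $\sigma(S_1,\dots,S_\ell)$, the walk $S$ has i.i.d.\ increments with mean $\Lambda'(\a)=\rho>0$ and variance $\Lambda''(\a)=\sigma(\a)^2<\8$, and by \eqref{eq:stand25} these increments remain non-lattice. Writing the overshoot $R_b=S_{\tau_b}-b\ge 0$, so that $S_\ell=b+R_b$ on $\{\tau_b=\ell\}$, we obtain $\PP[\tau_b=\ell]=e^{-\a b+\ell\Lambda(\a)}\,\widetilde\EE[e^{-\a R_b}\,;\ \tau_b=\ell]$. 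Using $n=\lfloor\log t/\rho\rfloor$ together with $\Lambda^*(\rho)=\a\rho-\Lambda(\a)=\overline\a\rho$ and $\overline\a-\a=-\Lambda(\a)/\rho$, the deterministic prefactor unwinds, up to a bounded constant coming from the rounding in \eqref{eq:nt} and the index conventions, to $a^{-\a}\,t^{-\overline\a}\,z^{\a}\,\lambda(\a)^{-n'}\,\lambda(\a)^{-\Theta(t)}$; the factor $\lambda(\a)^{-\Theta(t)}$ is exactly the one appearing in \eqref{eq:mthm2point}.

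It then remains to analyse $\widetilde\EE[e^{-\a R_b}\,;\ \tau_b=\ell]$ and to integrate against $Z_{n'}^{\a}$. Since $n'=K\lfloor\log n\rfloor$ and $a\in[n^{-K},n^K]$, one checks that $\ell-b(z)/\rho=O(\log n+\log z)=o(\sqrt\ell\,)$ for $z$ in the bulk of $Z_{n'}$, the remaining super-polynomially small contribution of $\{Z_{n'}>e^{\sqrt n}\}$ being absorbed via the moment assumption of Theorem~\ref{thm:mthm2}; thus $\tau_b$ is evaluated at its typical position. A local (Stone-type) limit theorem for the first passage time of a non-lattice random walk with positive drift, jointly with the classical asymptotic independence of the overshoot $R_b$ and the normalised fluctuation of $\tau_b$ (see \cite{gut2009stopped}, and the product counterpart in \cite{buraczewski2016precise}), yields
\begin{equation*}
	\widetilde\EE\big[e^{-\a R_b}\,;\ \tau_b=\ell\big]\ \sim\ \widetilde\EE\big[e^{-\a R_\8}\big]\,\frac{\rho}{\sigma(\a)\sqrt{2\pi\,\ell}}
\end{equation*}
uniformly over the admissible range of $b$, where $R_\8$ is the stationary overshoot of $\{S_m\}$ under $\widetilde\PP$. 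Integrating against $Z_{n'}^{\a}$ — legitimate by dominated convergence, using the uniform upper bound of Proposition~\ref{prop:weakasymp} as dominating function together with $\EE[Z_{n'}^{\a}]\sim c_Z(\a)\lambda(\a)^{n'}$ and $n'=n'(t)\to\8$ (Proposition~\ref{prop:momzn}, Corollary~\ref{cor:3.3}) — and recalling $\ell\sim n\sim\rho^{-1}\log t$, we conclude that $a^{\a}t^{\overline\a}\lambda(\a)^{\Theta(t)}\sqrt{\log t}\,\PP[M_{n-1}\le at,\ X_n>at]$ converges, uniformly in $a\in[n^{-K},n^K]$, to a constant $\cC_4(\rho)$ — which is the content of Lemma~\ref{lem:concl} once the rounding factor $\lambda(\a)^{-\Theta(t)}$ present in \eqref{eq:mthm2point} is made explicit; the uniformity is automatic since the only $a$-dependence is the explicit $a^{-\a}$. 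Tracking constants gives $\cC_4(\rho)=c_Z(\a)\,\cC_4^{\Pi}(\rho)$, with $\cC_4^{\Pi}(\rho)$ the corresponding constant for $\{\Pi_n\}$ from \cite{buraczewski2016precise}, expressible through $\widetilde\EE[e^{-\a R_\8}]$, $\sigma(\a)$ and $\rho$.

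The step I expect to be the main obstacle is the local limit theorem used above: one needs a Stone-type statement for the non-lattice first passage time $\tau_b$ \emph{jointly} with its overshoot, uniform over the whole family of levels $b$ that drift with $t$ (and with $Z_{n'}$), rather than for one fixed level; and then the interchange of this asymptotics with the expectation over $Z_{n'}$, which relies on that uniformity together with a uniform-integrability control of $Z_{n'}^{\a}/\lambda(\a)^{n'}$. Everything else — the negligibility of the exceptional events, the reconciliation of the $\lambda(\a)^{-\Theta(t)}$ factor, and the a priori two-sided bounds — is already available from Proposition~\ref{prop:weakasymp}, Lemma~\ref{lem:branchPetrov} and the results of the preceding sections.
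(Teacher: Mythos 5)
Your proposal takes a genuinely different route from the paper's. The paper never introduces the first passage time $\tau_b$ of the underlying random walk at all: instead it fixes a large $L$, decomposes the event $\{M_{n-1}\le at,\ X_n>at\}$ according to the position of $Z_{n'}\Pi_{n',n-L}$ and the behaviour of $\Pi$ over the last $L$ steps, disposes of the tail contributions via Lemma~\ref{lem:burmas1}, conditions on the pair $\bigl(M'_{L-1},\Pi'_L\bigr)$ from the last $L$ factors, and feeds the resulting integral into Lemma~\ref{lem:branchPetrov}. The limit emerges, after $L\to\infty$, in the form $C(\alpha)\,e^{-L\Lambda(\alpha)}\,\EE\bigl[\bigl((\Pi'_L)^\alpha-(M'_{L-1})^\alpha\bigr)_+\bigr]$, with the existence of the limit forced by a squeezing argument as $L\to\infty$ (the error terms carry a $\delta^L$). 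This lets the paper re-use the one large-deviation local tool it has already built — Lemma~\ref{lem:branchPetrov}, a perturbed Petrov theorem for $H_{j_n}\Pi_{j_n,n}$ — and avoid developing anything new about first passage times. Your approach instead conditions on $Z_{n'}$, writes the event as $\{\tau_{b(Z_{n'})}=\ell\}$ for the random walk $S_m$, applies the exponential tilt with parameter $\alpha$, and evaluates $\widetilde\EE[e^{-\alpha R_b};\tau_b=\ell]$ by a local limit theorem for the first passage time jointly with the overshoot, exactly in the spirit of Buraczewski--Ma\'slanka \cite{buraczewski2016precise}. Conceptually this is the cleaner path (the constant comes out directly as $\widetilde\EE[e^{-\alpha R_\infty}]$ up to explicit normalisation, rather than as an $L\to\infty$ limit of the $\EE[((\Pi'_L)^\alpha-(M'_{L-1})^\alpha)_+]$-type quantities, which morally encode the same information), but it buys this at the cost of having to establish the joint non-lattice LLT for $(\tau_b,R_b)$ uniformly over a drifting family of levels $b$, which — as you yourself flag — is not off-the-shelf in \cite{gut2009stopped} and has to be carried out essentially from scratch. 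This is precisely the work the paper sidesteps.

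Two smaller remarks on your write-up. First, the invocation of Proposition~\ref{prop:weakasymp} as the dominating function in the "integrate against $Z_{n'}^\alpha$" step is misplaced: that proposition bounds the full probability $\PP[M_{n-1}\le at,\ X_n>at]$, not the conditional quantity $\PP[\tau_{b(z)}=\ell]$ as a function of $z$. What you actually need for the dominated-convergence step is a \emph{pointwise in $z$} bound of the form $\PP[\tau_{b(z)}=\ell]\le C\,z^\alpha\,(at)^{-\alpha}\lambda(\alpha)^{\ell}\ell^{-1/2}$ valid for $z$ in the bulk, which should come out of the uniform version of your LLT (together with a Markov bound for large $z$), not out of Proposition~\ref{prop:weakasymp}. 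Second, you are right to note that the residual $\lambda(\alpha)^{-\Theta(t)}$ factor from the rounding in \eqref{eq:nt} means the literal limit in Lemma~\ref{lem:concl} should really be read with $t=e^{\rho n}$, or equivalently with the oscillatory factor exhibited; the paper is a bit loose about this too, and your bookkeeping here is actually slightly more careful than the source.
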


\begin{proof}
Take  large $L$ and write
	\begin{multline*}
		\PP\left[ Z_{n'}\Pi_{n',n -L} \max_{n-L \leq j < n}\Pi_{n-L,j} \leq at,\: Z_{n'}\Pi_{n',n }  >at \right]   = \\
   			= \PP\left[ Z_{n'}\Pi_{n',n -L } \max_{n-L\leq j<n}\Pi_{n-L,j} \leq at,\: Z_{n'}\Pi_{n',n }  >at,
				\: \max_{j<n-L}Z_{n'}\Pi_{n',j} > at \right] \\
			+\PP \left[ M_{n-1 } \leq t, \: Z_{n'}\Pi_{n', n} > at  \right].
	\end{multline*}
	Using Lemma~\ref{lem:burmas1} we infer that the first term on the right-hand side has arbitrarily small contribution since it
	can be bounded uniformly with respect to $a$ viz.
	\begin{multline*}
		\PP\left[Z_{n'}\Pi_{n',n }  >at,\: \max_{j<n-L}Z_{n'}\Pi_{n',j} > at \right] = \PP\big[ M_{n-L}>at, X_n > at \big]\\
		\leq c(\alpha, \beta) \lambda(\alpha)^{-L} \lambda(\beta)^{L}  \frac{a^{-\alpha}t^{-\Lambda^*(\rho)/\rho}}{\sqrt{\log t}}
		 = c(\alpha, \beta) \delta^{L}  \frac{a^{-\alpha}t^{-\Lambda^*(\rho)/\rho}}{\sqrt{\log t}},
	\end{multline*}
 where $\beta<\a$ and 
$\delta = \frac{\lambda(\beta)}{\lambda(\alpha)} < 1$. Choosing large $L$, $\delta^L$ can be 	
	arbitrary small. Whence to conclude the Lemma 
it will be sufficient to show that
	\begin{equation*}
	a^{\alpha} 	t^{\Lambda^*(\rho)/\rho}\sqrt{\log t}\;\PP\left[ Z_{n'}\Pi_{n',n -L-1}
			\max_{n-L \leq j < n}\Pi_{n-L,j} \leq at,\: Z_{n'}\Pi_{n',n }  >at \right]
	\end{equation*}
	converges for $L$ arbitrarily large. For this reason note that the probability in question can be decomposed in the following fashion
	\begin{align*}
		\PP\Big[ Z_{n'}&\Pi_{n',n -L}  \max_{n-L \leq j < n}\Pi_{n-L,j} \leq at,\: Z_{n'}\Pi_{n',n }  >at \Big] \\
  		=& \PP\Big[Z_{n'}\Pi_{n',n -L} \leq ate^{-(\log t)^{1/4}},\: Z_{n'}\Pi_{n',n -L} \max_{n-L \leq j < n}\Pi_{n-L,j} \leq at,
			\: Z_{n'}\Pi_{n',n }  >at \Big]\\
		 +&\PP\Big[ ate^{-(\log t)^{1/4}}<Z_{n'}\Pi_{n',n -L}\le at,\:Z_{n'}\Pi_{n',n -L} \max_{n-L \leq j < n}\Pi_{n-L,j} \leq at,
			\: Z_{n'}\Pi_{n',n }  >at \Big]\\ =& J_1+J_2.
	\end{align*}
 Consider the first term for the moment. Our aim is to prove that  its contribution is negligible. We will utilize the same procedure as
	 the one used in the proof of Lemma~\ref{lem:burmas1}. For $\beta > \alpha$ we have
	\begin{align*}
		J_1	& \leq  \PP\left[Z_{n'}\Pi_{n',n -L} \leq ate^{-(\log t)^{1/4}},\: Z_{n'}\Pi_{n',n }  >at \right] \\
 			&\leq \sum_{m \geq 0} \PP\left[e^{-m-1}ate^{-(\log t)^{1/4}}
				\leq Z_{n'}\Pi_{n',n -L} \leq e^{-m}ate^{-(\log t)^{1/4}},\: Z_{n'}\Pi_{n',n }  >at \right]\\
 			& \leq \sum_{m \geq 0} \PP\left[e^{-m-1}ate^{-(\log t)^{1/4}}
				\leq Z_{n'}\Pi_{n',n -L}\right] \:  \PP\left[ \Pi_{n-L,n }  >e^{m}e^{(\log t)^{1/4}} \right] \\
			& \leq \sum_{m \geq 0}  {c}\lambda(\alpha)^{n - L}a^{-\alpha} t^{-\alpha}  e^{\alpha (\log t)^{\frac{1}{4}}}
				e^{\alpha(m+1)} \lambda(\beta)^{L+1} e^{-\beta (\log t)^{\frac{1}{4}}} e^{-\beta m } \\
 			& =  c a^{-\alpha}t^{-\Lambda^*(\rho)/\rho}  e^{(\alpha - \beta)(\log t)^{\frac{1}{4}}}
				   \bigg(\frac{\lambda(\beta)}{\lambda(\alpha)}\bigg)^{L}  \sum_{m \geq 0} e^{\alpha(m+1)} e^{-\beta m }
				= o\left( \frac{a^{-\alpha} t^{-\Lambda^*(\rho)/\rho}}{\sqrt{\log t}} \right),
	\end{align*}
	for some $c = c(\alpha,\beta, L)$. Left with an~investigation of $J_2$ we note that by the same arguments
	as above one can deduce that
	\begin{equation*}
		\PP \left[ Z_{n'}\Pi_{n',n -L} > ate^{-(\log t)^{1/4}},\: \Pi_{n-L,n } > e^{(\log t)^{1/4}}\right]
		= o\left( \frac{ {a^{-\alpha}}  t^{-\Lambda^*(\rho)/\rho}}{\sqrt{\log t}} \right)
	\end{equation*}
	and as a consequence
	\begin{align*}
		J_2  = &\PP\left[ ate^{-(\log t)^{1/4}} < Z_{n'}\Pi_{n', n-L}\le at ,
			\: \right. Z_{n'}\Pi_{n', n-L} \max_{n-L\leq j<n}\Pi_{n-L,j} \leq at, \\
			 & \left.\: Z_{n'}\Pi_{n', n} >at, \Pi_{n-L,n} \le e^{(\log t)^{1/4}}\right]
		+ o\left( \frac{{a^{-\alpha}}  t^{-\Lambda^*(\rho)/\rho}}{\sqrt{\log t}} \right).
	\end{align*}
	By conditioning on $M_{L-1}'=\max_{n-L+1\leq j<n}\Pi_{n-L+1,j}$ and $\Pi'_L = \Pi_{n-L+1,n}$ we  have
	\begin{equation}
 		J_2=   \int_{0\le y\le x < e^{(\log t)^{1/4}}}
			\PP\left[at x^{-1} < Z_{n'}\Pi_{n',n -L} < aty^{-1} \right]
			\PP \left[M'_{L-1} \in dy, 
 \Pi'_L\in dx  \right]
		+ o\left( \frac{a^{-\a}t^{-\Lambda^*(\rho)/\rho}}{\sqrt{\log t}} \right).
	\end{equation}
Now apply Lemma~\ref{lem:branchPetrov} 
with $j_n = L+K\log n$,
	$\overline{\delta}_n = C n^{-\frac{1}{4}}$ and  $\delta_n = \frac{\log(ay^{-1})}{n}$, where $C>0$ is such that $|\sqrt{n}\delta_n |= n^{-1/2}\log(y/a) \leq n^{-1/4}C$. We infer that
	\begin{equation*}
		\PP\left[Z_{n'}\Pi_{n',n - L} \geq aty^{-1}  \right]  = c(\alpha) \frac{a^{-\alpha}t^{-\Lambda^*(\rho)/\rho}}{\sqrt{\log t}} y^\alpha
		e^{-L \Lambda(\alpha)} (1+o(1)).
	\end{equation*}
	We conclude that
	\begin{equation*}
		J_2= C(\alpha) \frac{a^{-\alpha}t^{-\Lambda^*(\rho)/\rho}}{\sqrt{\log t}} e^{-L \Lambda(\alpha)}
			\EE\left[\left( (\Pi'_L)^{\alpha} - (M_{L-1}')^{\alpha} \right)_{+} \right]   (1+o(1)) \quad \text{as } t \to \infty.
	\end{equation*}
\end{proof}

\begin{proof}[Proof of Theorem~\ref{thm:mthm2}]
We focus on the proof of precise pointwise estimates \eqref{eq:mthm2point} (Step 1), since it implies almost immediately both \eqref{eq:2.4down} and \eqref{eq:2.4up} (Step 2 and Step 3). Let us mention that  both limits  \eqref{eq:2.4down} and \eqref{eq:2.4up} can be proved in a much simpler way, e.g. using similar techniques to those presented in Lalley \cite{Lalley} and in particular omitting the tedious proofs of Lemmas \ref{lem:burmas1} and \ref{lem:concl}.

\medskip

{\sc Step 1.}
  First we prove that for any  fixed constant $K$ we have
  \begin{equation}\label{eq:precise}
	\P\big[ T_{at}^Z = n\big] = 	\PP\left[ \max_{j \leq n-1}Z_j\leq at, \: Z_n > at \right] \sim \cC_4(\rho) \frac{a^{-\alpha}t^{-\Lambda^*(\rho)/\rho}}{\sqrt{\log t}},
	\end{equation} uniformly for $a\in[n^{-K}, n^K]$. Observe that formula \eqref{eq:precise} implies \eqref{eq:mthm2point}. Indeed for $n,t,\Theta$ as in \eqref{eq:nt}
$$
\P\big[ T_t^Z = n \big] = \P\Big[ \max_{j \leq n-1}Z_j\leq e^{\rho \Theta(t)} t, \: Z_n > e^{\rho \Theta(t)}t
\Big]
\sim \cC_4(\rho) \lambda(\a)^{-\Theta(t)} \frac{t^{-\Lambda^*(\rho)/\rho }}{\sqrt{\log t}}.
$$
Note that \eqref{eq:precise} is indeed much stronger than $\eqref{eq:mthm2point}$, because the estimates are uniform, however uniformity will be needed to deduce \eqref{eq:2.4down} and \eqref{eq:2.4up}.

 Recall that $n'=K\log n$ (the constant $K$ will come into play below).
	In view of all previous considerations, we are left with approximation of $Z_n$ with  $X_n = Z_{n'}\Pi_{n',n}$ as $n \to \infty$.

\medskip

\noindent
{\sc Step 1a.}  We  prove upper estimate
\begin{equation}\label{eq:5}
  \limsup_{t\to\infty}  a^{\alpha} t^{\Lambda^*(\rho)/\rho} \sqrt{\log t} \PP\left[ T_{at}^{Z} = n\right] \le \cC_4(\rho).
\end{equation}
  For this purpose we fix $\delta>0$ and   write
\begin{align*}
 \PP\Big[ \max_{j \leq n-1}Z_j\leq at, \: Z_n > at \Big]
&\le 	\PP\big[\max_{j \leq n-1}Z_j\leq at, \: M_{n-1} > (1+\delta)at  \big]
+\PP\big[X_n \le  (1- \delta)at, Z_n>at  \big]\\
&+
 \PP\big[ (1-\delta)at <  X_n  \le   (1+ \delta)at \big]
+ \PP\big[M_{n-1} \le  (1+\delta)at, X_n \ge  (1+ \delta)at \big]\\
&= I + II + III + IV.
\end{align*}
In view of Lemma \ref{lem:concl} the last expression has the required asymptotic behaviour, that is

\begin{equation}\label{eq:czw2}
 \lim_{t \to \infty}  a^\a t^{\Lambda^*(\rho)/\rho}   IV = (1+\delta)^{-\a} \cC_4(\rho).
\end{equation}
Thus we need to prove that the other terms are negligible.
 The first one, namely I,
is of the order
	\begin{equation}\label{eq:1I}
		I=o\left( \frac{ a^{-\alpha}t^{-\Lambda^*(\rho)/\rho}}{\sqrt{\log t}} \right)
	\end{equation}
	  by the merit of Lemma~\ref{lem:approx}.
 Arguing as in the proof of Lemma~\ref{lem:approx} (see \eqref{eq:czw3}), one can deduce
	\begin{equation}\label{eq:1II}
		 II = \PP\left[ Z_n>at, \: X_n
 \leq (1-\delta)at \right]=o\left( \frac{ a^{-\alpha}t^{-\Lambda^*(\rho)/\rho}}{\sqrt{\log t}} \right).
	\end{equation}
	By an appeal to Lemma~\ref{lem:branchPetrov} we estimate III viz.
	\begin{equation}\label{eq:1III}
		\PP[(1-\delta)at < X_n 
\leq (1+\delta)at] \leq
			 h(\delta) \frac{ a^{-\alpha}t^{-\Lambda^*(\rho)/\rho}}{\sqrt{\log t}},
	\end{equation}
	where $h(\delta) \to 0$ as $\delta \to 0$.  Combining \eqref{eq:czw2}, \eqref{eq:1I}, \eqref{eq:1II} with \eqref{eq:1III}
 and  then passing with $\delta$ to 0 we conclude \eqref{eq:5}.

\medskip

\noindent

{\sc Step 1b.}
To get the lower bound, apply the same procedure.  However in this case, for fixed $\delta>0$ we use the following
inequality
\begin{multline*}
 \PP\Big[ \max_{j \leq n-1}Z_j\leq at, \: Z_n > at \Big]
\ge \PP\big[M_{n-1} \le  (1-\delta)at, X_n \ge  (1- \delta)at \big]\\
- 	\PP\big[\max_{j \leq n-1}Z_j >  at, \: M_{n-1} \le (1-\delta)at  \big]
 - \PP\big[ (1-\delta)at <  X_n  \le   (1+ \delta)at \big]
- \PP\big[X_n  >   (1+ \delta)at, Z_n \le at  \big]
\end{multline*}
The same arguments as in the Step 1a  (the first term on the right side above dominates, whereas all the remaining are
negligible) lead us to the lower bound
\begin{equation*}
  \liminf_{t\to\infty} a^{\alpha} t^{\Lambda^*(\rho)/\rho} \sqrt{\log t} \PP\left[ T_{at}^{Z} = n\right] \ge  \cC_4(\rho),
\end{equation*}
which together with \eqref{eq:5} completes proof of the first step.

\medskip

\noindent
{\sc Step 2.} Now we prove \eqref{eq:2.4down}. Note, that we only consider $\rho > \rho_0$, so in particular $\lambda(\alpha)>1$.
 We will use below the following bound
	$$
		\PP[\max_{0\leq j \leq \overline{n}} Z_j > t] \leq \PP[W_{\overline{n}}>t] = o\left( \frac{t^{-\Lambda^*(\rho)/\rho}}{\sqrt{\log t}}\right),$$
for $\overline{n} = n -D \log n$ with	$D > \Lambda(\alpha)^{-1}(2\alpha + 7)$. We postpone its proof to the next Section (see Corollary \ref{cor:nbar}).
Therefore, by \eqref{eq:precise}
\begin{align*}
\PP[T_t^Z \le n] &\sim
\PP[n- D\log n \le T_t^Z \le n] \\
&= \sum_{j=0}^{D\log n}\PP[\max_{i < n-j } Z_i \le t \mbox{ and } Z_{n-j}> t]\\
&=\sum_{j=0}^{D\log n}\PP[\max_{i < n-j } Z_i \le e^{\rho(n-j)}e^{\rho j}e^{\rho\Theta(t)} \mbox{ and }Z_{n-j}> e^{\rho(n-j)}e^{\rho j}e^{\rho\Theta(t)}]\\
&\sim \cC_4(\rho) \sum_{j=0}^{D\log n} \frac{e^{-\a \rho j} e^{-\a\rho\Theta(t)}  e^{-\Lambda^*(\rho) (n-j) }}{\sqrt{\rho(n-j)}}\\
&\sim \cC_4(\rho) \lambda(\a)^{-\Theta(t)}  \frac{ t^{-\Lambda^*(\rho)/\rho} }{\sqrt{\log t}} \sum_{j=0}^{D\log n}  \lambda(\a)^{-j}\\
&\sim \frac{\cC_4(\rho) \lambda(\alpha)^{1-\Theta(t)}}{\lambda(\alpha)-1} \frac{ t^{-\Lambda^*(\rho)/\rho} }{\sqrt{\log t}},
\end{align*}
which completes the proof of \eqref{eq:2.4down}.

\medskip

\noindent
 {\sc Step 3.} To prove \eqref{eq:2.4up} we proceed as above. This time $\rho < \rho_0$ and $\lambda(\alpha)<1$. Applying the  Markov  inequality and reasoning as in Lemma \ref{lem:m1}
one can  prove that for large $D$ and  $\overline{n} = n + D \log n$
	 $$
		\PP[\max_{ j > \overline{n}} Z_j > t]
= o\left( \frac{t^{-\Lambda^*(\rho)/\rho}}{\sqrt{\log t}}\right)
	$$
Next, applying \eqref{eq:precise}
\begin{align*}
\PP[T_t^Z \ge n] &\sim
\PP[n \le T_t^Z \le n +  D\log n]\\
&= \sum_{j=0}^{D\log n}\PP[\max_{i < n+j } Z_i \le t \mbox{ and } Z_{n+j}> t]\\
&=\sum_{j=0}^{D\log n}\PP[\max_{i < n+j } Z_i \le e^{\rho(n+j)}e^{- \rho j} e^{\rho\Theta(t)} \mbox{ and }Z_{n+j}> e^{\rho(n+j)}e^{-\rho j}e^{\rho\Theta(t)}]\\
&\sim \cC_4(\rho) \sum_{j=0}^{D\log n} \frac{e^{\a \rho j}e^{- \a \rho\Theta(t)}  e^{-(n+j)\Lambda^*(\rho) }}{\sqrt{\rho(n+j)}}\\
&\sim \cC_4(\rho) \frac{\lambda(\a)^{-\Theta(t)} t^{-\Lambda^*(\rho)/\rho} }{\sqrt{\log t}} \sum_{j=0}^{D\log n}  \lambda(\a)^{j}\\
&\sim \frac{\cC_4(\rho) \lambda(\a)^{-\Theta(t)} }{1- \lambda(\alpha)} \frac{ t^{-\Lambda^*(\rho)/\rho} }{\sqrt{\log t}}.
\end{align*}
 We conclude the proof.
\end{proof}

\section{Estimates for $W$}\label{sec:total}

\subsection{Preliminaty bounds for the total population $W_n$ and the perpetuity $R_n$}
In this Section we consider the total population up to generation $n$ of BPRE $Z$, $W_n = \sum_{k=0}^nZ_k$. Recall that its quenched expectation $R_n = \E[W_n|{\mathcal Q}]$ forms a perpetuity sequence (see \eqref{eq:wt1}). We start with a few elementary properties of $W_n$ and $R_n$.
The following Lemma was proved in \cite{buraczewski2014large} (see proof of Theorem 2.1).
\begin{lem}\label{lem:momrn}
	If $\alpha >\alpha_0$, then  there exists a finite positive limit
	\begin{equation*}
		c_R(\alpha)  = \lim_{n \to \infty}\lambda(\alpha)^{-n}\EE[R_n^{\alpha}].
	\end{equation*}
\end{lem}
 The next Lemma was stated in \cite{buraczewski2014large} as Lemma~3.1. We prove below  its counterpart in terms of $W$.
\begin{lem}\label{lem:CzebPerp}
If $\alpha \geq \alpha_0$ and $\varepsilon < \min\{1/2, \a_{\infty}-\a\}$ and $\rho = \rho(\alpha)$,  then  one can find a constant $c=c(\alpha,\varepsilon)>0$ such that for all $N$ and $M$,
	\begin{equation*}
		\PP[R_{N} > e^M] \leq c N^{2(\alpha+1)}  \exp \{ - M(\alpha+\varepsilon) +N\Lambda(\alpha) + N\rho\varepsilon+c N\varepsilon^2 \}.
	\end{equation*}
\end{lem}
\begin{lem}\label{lem:PrimEst} If $\a \ge \a_0$ and the assumptions of Theorem~\ref{thm:mthm1} are in force. Then for any $\eps$ such that $\a+\eps < \a_\8$ and $\EE[Z_1^{\alpha+\varepsilon}]<\infty$ one can find a constant $c=c(\alpha,\varepsilon)>0$ such that for all $N$, $M$
	\begin{equation}\label{eq:PrimEst1}
		\PP[W_{N} > e^M] \leq c N^{2(\alpha+c+1)} \exp \{ - M(\alpha+\varepsilon) +N\Lambda(\alpha) + N\rho\varepsilon+cN\varepsilon^2 \}.
	\end{equation}
\end{lem}

\begin{proof}
By Lemma \ref{lem:s4}, we have
	\begin{align*}
		\PP[W_N>e^M] & \leq \sum_{k=1}^N \PP\left[Z_k > e^M(2k^2)^{-1}\right]
		\leq \sum_{k=1}^N \EE[Z_k^{\alpha+\varepsilon}] e^{-M(\alpha+\varepsilon)}2^{\alpha+\varepsilon}k^{2(\alpha+\varepsilon)}\\
		& =  ce^{-M(\alpha+\varepsilon)}\sum_{k=1}^N e^{\Lambda(\alpha+\varepsilon)k}k^{2(\alpha+c+\varepsilon)}.
	\end{align*}
	 Since $\Lambda(\alpha+\varepsilon) \le  \Lambda(\alpha)+\rho(\alpha)\varepsilon +\sigma(\alpha) \varepsilon^2$,
	for sufficiently small $\varepsilon$ 
	\begin{align*}
		\PP[W_N>e^M] & \leq c e^{-M(\alpha+\varepsilon)}\sum_{k=1}^N e^{(\Lambda(\alpha)+\rho(\alpha)\varepsilon +\varepsilon^2\sigma(\alpha))k}k^{2(\alpha+c+\varepsilon)}\\
		& \leq c e^{-M(\alpha+\varepsilon)}e^{(\Lambda(\alpha) +\rho(\alpha)\varepsilon +\varepsilon^2 \sigma(\alpha))N}N^{2(\alpha+c+\varepsilon)+1}.
	\end{align*}
\end{proof}

\begin{cor}\label{cor:nbar}
	Assume that $n = \lfloor \rho^{-1}\log t \rfloor$ for some $\rho(\alpha_0) < \rho < \rho(\alpha_{\infty})$.
	Let $\alpha$ be chosen such
	that $\rho(\alpha) = \rho$ and pick $D > \Lambda(\alpha)^{-1}(2\alpha+2\sigma(\alpha)+3)$. We have
	\begin{equation*}
		\PP[W_{\overline{n}}>t] = o \big( (\log t)^{-1/2} t^{-\Lambda^*(\rho)/\rho}\big),
	\end{equation*}
	where $\overline{n} =\lfloor  n - D \log n\rfloor $. In partucular
$$		\PP[\max_{0\le j\le \overline n}Z_j >t] = o \big( (\log t)^{-1/2} t^{-\Lambda^*(\rho)/\rho}\big).$$

\end{cor}

\begin{proof}
	If we invoke Lemma~\ref{lem:PrimEst} with $N =\overline{n}$, $M=\log t$ and $\varepsilon = (\log t)^{-1/2}$ we infer,
	that for some constant $c$ (which is bounded by Lemma \ref{lem:s1})
	\begin{align*}
		\PP[W_{\overline{n}}>t] & \leq c t^{-\alpha-\varepsilon} \overline{n}^{2(\alpha+1)}
			\exp \{ \overline{n} \Lambda(\alpha)+ \overline{n} \rho\varepsilon + \sigma(\alpha)\overline{n} \varepsilon^2  \} \\
			& \leq c t^{-\alpha-\varepsilon} n^{2(\alpha+\sigma(\alpha)+1)}
				\exp \{ (n - D\log n) \Lambda(\alpha)+ (n - D\log n) \rho\varepsilon \} \\
			& = c t^{-\Lambda^*(\rho)/\rho} n^{2(\alpha+\sigma(\alpha)+1)}
				\exp \{ - D\log n \Lambda(\alpha) - D\log n\rho\varepsilon \} \\
			& \leq c t^{-\Lambda^*(\rho)/\rho} (\log t)^{2(\alpha+\sigma(\alpha)+1) - D \Lambda(\alpha)}.
	\end{align*}
	By the choice of $D$ the last expression is of correct order.
\end{proof}

The arguments leading to large deviation estimates for $W$ follow the same idea as the one for $Z$, namely approximation of $W_n$ by its conditional mean.
In order to be able to execute a similar procedure, denote for $n>m$
\begin{align*}
	W_{m,n} &= W_n - W_{m} = \sum_{k=m+1}^nZ_k,\\
	R_{m,n} &= \sum_{k=m+1}^n\prod_{j=m}^{k-1}A_j = A_m+A_mA_{m+1}+ \ldots + A_mA_{m+1}\ldots A_{n-1}.
\end{align*}
Then
$$
\EE\big[ W_{m,n}|Z_m, \mathcal{Q} \big] = R_{m,n-1} Z_m .
$$
In view of Corollary~\ref{cor:nbar} it suffices to investigate $\PP[W_{\overline{n},n}>t]$. For the moment, we will focus our attention on showing how to approximate $W_{\overline{n},n}$ by its conditional mean.

\begin{lem}\label{lem:diff}
For $n>m$ the following formula holds	
	\begin{equation}\label{eq:W-ZR}
 		W_{m,n} - Z_m R_{m,n-1}=  \sum_{k=m+1}^n (Z_k-A_{k-1}Z_{k-1})(1+R_{k,n}).
	\end{equation}
\end{lem}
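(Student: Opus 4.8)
The plan is to prove \eqref{eq:W-ZR} as a purely algebraic identity; no probability enters, only the recursive structure of $Z$. Abbreviate the conditionally centred increment by $D_k := Z_k - A_{k-1}Z_{k-1}$, so that the tautology $Z_k = A_{k-1}Z_{k-1} + D_k$ holds for every $k\ge 1$. Iterating this relation from level $k$ down to level $m$ (equivalently, a one-line induction on $k$) produces the closed form
\[
	Z_k \;=\; Z_m\prod_{j=m}^{k-1}A_j \;+\; \sum_{i=m+1}^{k} D_i\prod_{j=i}^{k-1}A_j,
	\qquad m\le k\le n,
\]
with the standing conventions that an empty product equals $1$ and an empty sum equals $0$; the case $k=m$ is just $Z_m=Z_m$, and the passage from $k-1$ to $k$ amounts to multiplying through by $A_{k-1}$ and adding $D_k$.

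First I would sum this closed form over $k=m+1,\dots,n$, which yields $W_{m,n}$ on the left. The terms carrying $Z_m$ collect into $Z_m\sum_{k=m+1}^{n}\prod_{j=m}^{k-1}A_j$, which is precisely the term $Z_m R_{m,n-1}$ — it is nothing but the conditional mean $\E[W_{m,n}\mid Z_m,\mathcal{Q}]$ recorded just above the statement, with the averaged environment replaced by the realised one. It then remains to treat the double sum $\sum_{k=m+1}^{n}\sum_{i=m+1}^{k} D_i\prod_{j=i}^{k-1}A_j$, for which I would interchange the order of summation and rewrite it as $\sum_{i=m+1}^{n} D_i \sum_{k=i}^{n}\prod_{j=i}^{k-1}A_j$.

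The last step is to identify the inner sum: separating off the $k=i$ summand (an empty product, hence $1$) gives $\sum_{k=i}^{n}\prod_{j=i}^{k-1}A_j = 1+\sum_{k=i+1}^{n}\prod_{j=i}^{k-1}A_j = 1+R_{i,n}$. Substituting this back into the double sum reproduces exactly the right-hand side of \eqref{eq:W-ZR}, which finishes the proof.

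The computation is short, and the only place that genuinely needs care is the bookkeeping of the index ranges together with the empty-product/empty-sum conventions, so that the sums defining $R_{m,n-1}$ and $R_{k,n}$ (in particular $R_{n,n}=0$) are matched correctly; this is the one point I would check slowly. As an alternative one could establish \eqref{eq:W-ZR} by induction on $n$ with $m$ fixed, splitting off the new term $Z_n$ on the left and $D_n(1+R_{n,n})=D_n$ on the right and using $R_{m,n}=R_{m,n-1}+\prod_{j=m}^{n-1}A_j$ together with $R_{k,n}=R_{k,n-1}+\prod_{j=k}^{n-1}A_j$; the unrolling computation above seems the cleaner route.
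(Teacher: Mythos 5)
Your argument follows the same route as the paper's proof: iterate $Z_k = A_{k-1}Z_{k-1}+D_k$ into the closed form $Z_k - Z_m\Pi_{m,k-1}=\sum_{i=m+1}^k D_i \Pi_{i,k-1}$ (the paper's \eqref{eq:z-piN}), sum over $k=m+1,\dots,n$, and interchange the order of summation. The mechanism is right and the interchange is correctly carried out.

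The bookkeeping point you yourself flagged as needing care is, however, exactly the one that slipped. You identify $\sum_{k=m+1}^{n}\prod_{j=m}^{k-1}A_j$ with $R_{m,n-1}$ when collecting the $Z_m$ terms, yet two lines later you identify $\sum_{k=i+1}^{n}\prod_{j=i}^{k-1}A_j$ with $R_{i,n}$. These are the same sum up to relabelling $m$ by $i$, so they cannot carry different second subscripts. With the paper's definition $R_{m,n}=\sum_{k=m+1}^{n}\prod_{j=m}^{k-1}A_j$, both are $R_{\cdot,n}$, and the identity your computation actually produces is
\[
W_{m,n}-Z_m R_{m,n}=\sum_{k=m+1}^n D_k\,(1+R_{k,n}).
\]
A sanity check at $m=0$, $n=2$ confirms this and shows that the displayed statement with $R_{m,n-1}$ on the left does not balance; the $R_{m,n-1}$ in the lemma's statement (and the $R_{k,n-1}$ at the end of the paper's own proof) are off-by-one slips, and your writeup inherited them without noticing the resulting self-contradiction between your two identifications of the same sum. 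Modulo that one subscript, the proof is correct and matches the paper's.
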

\begin{proof}
Recall
	\begin{equation}\label{eq:z-pi}
		Z_j - \Pi_{j-1} = \sum_{k=1}^{j} (Z_k-A_{k-1}Z_{k-1}) \Pi_{k,j-1}.
	\end{equation}
	Similarly, taking a shorter telescopic sum yields, for $m <j$,
	\begin{equation}\label{eq:z-piN}
		Z_j - Z_m\Pi_{m,j-1} = \sum_{k=m+1}^{j} (Z_k-A_{k-1}Z_{k-1}) \Pi_{k,j-1}.
	\end{equation}
	If we sum the expression above over $m +1\leq j \leq n$ and change the order of summation on the right-hand side we
	will arrive at
	\begin{align*}
		W_{m,n} - Z_{m} R_{m,n-1}
			= &\sum_{j=m+1}^n\big(Z_j-Z_m\Pi_{m,j-1}\big)
			=\sum_{j=m+1}^n\sum_{k=m+1}^{j} (Z_k-A_{k-1}Z_{k-1})\Pi_{k,j-1}\\
			=&\sum_{k=m+1}^n (Z_k-A_{k-1}Z_{k-1})\sum_{j=k}^n \Pi_{k,j-1}
 			= \sum_{k=m+1}^n (Z_k-A_{k-1}Z_{k-1})(1+R_{k,n-1}),
	\end{align*} which proves the lemma.
\end{proof}

\begin{lem} \label{lem:5.7}
	Under the assumptions of Theorem~\ref{thm:mthm1} fix $\rho = \rho(\a)$ for some $\a\in (\a_0,\a_\8)$ and assume additionally \eqref{eq:h1} pick
	$D > \Lambda(\alpha)^{-1}(2\alpha+7)$. We have
	\begin{equation*}
		\PP[|W_{\overline{n},n} -   Z_{n'}\Pi_{n',\overline{n}-1} R_{\overline{n},n-1}| >t]
			= o \left( (\log t)^{-1/2} t^{-\Lambda^*(\rho)/\rho} \right),
	\end{equation*}
	where $\overline{n} = n - D \log n$ and $n' = K \log n$ with  $K$
	large enough.
\end{lem}
\begin{proof} First we compare $W_{\overline{n},n}$ with   $Z_{\overline{n}} R_{\overline{n},n-1}$. Fix $\varepsilon_0>0$.
	Using similar argument as in the proof of Lemma~\ref{lem:s4}, one can show that there are $c>0$ and $\delta \in (0,1)$
	\begin{equation*}
		\EE[|Z_k-A_{k-1}Z_{k-1} |^s] \leq c \delta^k \lambda(s)^k, \quad s \in (\alpha, \alpha+\varepsilon_0).
	\end{equation*}
	Combining this with  Lemma \ref{lem:diff} and Lemma~\ref{lem:CzebPerp} we obtain there exist $\delta>0$ such that for $\varepsilon \in (0, \varepsilon_0)$,
	\begin{align*}
		&\PP\Big[\big|W_{\overline{n},n} - Z_{\overline{n}} R_{\overline{n},n-1 }\big| >t\Big]  \leq
		\sum_{k=\overline{n}+1}^n \PP \Big[ |Z_k-A_{k-1}Z_{k-1}|(1+R_{k,n-1}) > t\big(2(k-\overline n)^2\big)^{-1}\Big]\\
		& \leq  c \sum_{k=\overline{n}+1}^n\EE\big[|Z_k-A_{k-1}Z_{k-1} |^{\alpha+\varepsilon}\big] t^{-\alpha-\varepsilon}
			(k-\overline n)^{2(\alpha+1)}  \lambda(\alpha)^{n-k} (n-k)^{2(\alpha+1)}
			e^{(n-k)( \rho\varepsilon+c \varepsilon^2) }\\
		& \leq c t^{-\overline {\alpha}} (\log n)^{4(\alpha+2)+1} n^{D(\rho \varepsilon + c\varepsilon^2)} \delta^{\overline n}
=o \big( (\log t)^{-1/2} t^{-\Lambda^*(\rho)/\rho}\big)
	\end{align*}
	provided that $K$ is chosen large enough. Here we used a version of the second claim in Lemma~\ref{lem:s4} which is locally uniform in $\alpha$. This can be proven using similar methods.   Next, by Lemma \ref{lem:diff} we may write
	\begin{equation*}
		Z_{\overline{n}} - Z_{n'}\Pi_{n',\overline{n}-1} = \sum_{k=n'+1}^{\overline{n}} (Z_k - A_{k-1}Z_{k-1}) \Pi_{k, \overline{n}-1}.
	\end{equation*}
	Since by Lemma~\ref{lem:CzebPerp}, for $n'+1\leq k \leq \overline{n}$ and  $\varepsilon = (\log t)^{-1/2}$ with $t$
	big enough
	\begin{align*}
\PP\Big[ |Z_{\overline n} - &Z_{n'}\Pi_{n',\overline n-1}| R_{\overline n, n-1} >t\Big]
\le \sum_{k=n'+1}^{\overline n} \PP\Big[ |Z_k - A_{k-1}Z_{k-1}|  \Pi_{k,\overline n-1} R_{\overline n,n-1} > t \big( 2 (k-n')\big)^{-1} \Big]\\
&\le c \sum_{k=n'+1}^{\overline n} \EE \big[|Z_k - A_{k-1}Z_{k-1}|^{\alpha+\varepsilon}\big] \lambda(\alpha+\varepsilon)^{\overline{n}-k}
(\log n)^{2(\alpha+1)} t^{-\alpha - \varepsilon} (k-n')^{2(\alpha+1)}e^{(n-\overline n) (\Lambda(\alpha) + \rho \varepsilon + c \varepsilon^2)}\\
&\le c t^{-\Lambda^*(\rho)/\rho} e^{cn\varepsilon^2} \cdot e^{\overline{n}(\Lambda(\alpha+\varepsilon) - \Lambda(\alpha))}e^{-\overline{n}\varepsilon \rho}e^{-\overline{n}c \varepsilon^2}\cdot \sum_{k=n'+1}^{\overline{n}} \delta^k (k-n')^{2(\alpha+1)}\\
&\le c t^{-\Lambda^*(\rho)/\rho} \rho^{n'} = C (\log t)^{K \log \delta} t^{-\Lambda^*(\rho)/\rho}
=o \big( (\log t)^{-1/2} t^{-\Lambda^*(\rho)/\rho}\big).
	\end{align*}
for appropriately large $K$.
\end{proof}

\begin{proof}[Proof of Theorem~\ref{thm:mthmld}]
	By previous considerations, we only need to consider
	\begin{equation*}
		\PP\left[ Z_{n'}\Pi_{n',\overline{n}-1}(1+R_{\overline{n},n}) > t \right].
	\end{equation*}
	Denote $H_{j_n} = Z_{n'}(1+R_{\overline{n},n})$ and apply Lemma~\ref{lem:branchPetrov} to infer that
	\begin{equation*}
	\PP\left[ Z_{n'}\Pi_{n',\overline{n}-1}(1+R_{\overline{n},n}) > t \right] = \PP\left[H_{j_n}\Pi_{j_n,n} >t\right] \sim
	\frac{\mathcal{C}_5(\alpha)}{\sqrt{\log t}} t^{-\Lambda^*(\rho)/\rho},
\end{equation*}
	where	
	$
		\mathcal{C}_5(\alpha) =\frac{c_Rc_Z}{\alpha \sigma(\alpha) \sqrt{2\pi }}$.

The second claim of Theorem~\ref{thm:mthmld} is an immediate consequence of  Lemma~\ref{lem:PrimEst}.
\end{proof}

\subsection{ Limit Theorems of $T_t^W$}\label{sec:5.2}

We start with the following Lemma. 
\begin{lem}\label{lem:m1}
Assume $\rho = \rho_0$ and let $n_1 = n-b\sqrt{n\log n}$ and $n_2 = n + b\sqrt{n\log n}$. Then for any $\delta>0$ one can pick $b>0$ large enough such that
$$\PP[W_{n_1} > t]\le C t^{-\alpha_0} (\log t)^{-\delta}$$
and
$$
\PP[ W_{n_2,\infty} > t] \le C t^{-\alpha_0} (\log t)^{-\delta},
$$
where $W_{n_2, \infty} = \sum_{j=n_2+1}^{\infty}Z_j$.
\end{lem}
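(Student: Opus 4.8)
The plan is to handle the two tails separately, using the Cram\'er condition \eqref{eq:cramer} in the form $\Lambda(\alpha_0)=0$, $\Lambda'(\alpha_0)=\rho_0$, $\Lambda''(\alpha_0)=\sigma(\alpha_0)^2>0$, the relation $n=\lfloor\log t/\rho_0\rfloor$ (so $\log t=\rho_0 n+O(1)$, whence $n\asymp\log t$), and the fact that $\lambda(1)<1$, i.e.\ $\Lambda(1)<0$ (a consequence of strict convexity of $\lambda$ together with $\lambda(0)=\lambda(\alpha_0)=1$ and $\alpha_0>1$). For $\PP[W_{n_1}>t]$ I would simply feed the Chebyshev-type bound of Lemma~\ref{lem:PrimEst} the near-optimal parameters $\alpha=\alpha_0$, $N=n_1$, $M=\log t$ and a vanishing $\varepsilon=\varepsilon_t$. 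Because $\Lambda(\alpha_0)=0$, $\rho(\alpha_0)=\rho_0$, and $n_1=n-b\sqrt{n\log n}$ makes the terms linear in $\varepsilon_t$ collapse to $-\varepsilon_t\log t+n_1\rho_0\varepsilon_t=-\rho_0 b\,\varepsilon_t\sqrt{n\log n}+O(\varepsilon_t)$, the choice $\varepsilon_t=\sqrt{\log n/n}$ (admissible for large $t$, since $\alpha_0+\varepsilon_t<\alpha_{\infty}$ and, using the hypothesis $\EE[Z_1^{\alpha_0+\varepsilon}]<\infty$, $\EE[Z_1^{\alpha_0+\varepsilon_t}]<\infty$) turns the exponent into $-\alpha_0\log t+(c-\rho_0 b)\log n+o(1)$; together with the prefactor $c\,n_1^{2(\alpha_0+1)}\le c\,n^{2(\alpha_0+1)}$ this gives $\PP[W_{n_1}>t]\le c\,t^{-\alpha_0}n^{2(\alpha_0+1)+c-\rho_0 b+o(1)}$, and since $n\asymp\log t$ it suffices to choose $b$ with $\rho_0 b>2(\alpha_0+1)+c+\delta$.

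The estimate for $W_{n_2,\infty}=\sum_{j>n_2}Z_j$ is the substantive one, and the main obstacle is that this is an \emph{infinite} sum for which the crude union bound over generations diverges (it would rely on $\EE[Z_j^{\alpha_0+\varepsilon}]\asymp\lambda(\alpha_0+\varepsilon)^j$, and $\lambda(\alpha_0+\varepsilon)>1$). The way around it is the branching structure: conditionally on $Z_{n_2}$, $W_{n_2,\infty}$ has the law of $\sum_{k=1}^{Z_{n_2}}V^{(k)}$ with the $V^{(k)}$ i.i.d., independent of $Z_{n_2}$, and each stochastically dominated by a copy of the total progeny $W_\infty=\sum_{j\ge0}Z_j$, which by \eqref{eq:af-kes} has $\PP[W_\infty>x]\sim c\,x^{-\alpha_0}$ and finite mean $(1-\lambda(1))^{-1}$. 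A standard single-big-jump bound for sums of i.i.d.\ heavy-tailed variables (split according to whether some summand exceeds $t/2$, and control the centered truncated sum by a $p$-th moment with $p>\alpha_0$) then produces constants $\varepsilon_0,C>0$ with $\PP[\sum_{k=1}^m V^{(k)}>t]\le C\,m\,t^{-\alpha_0}$ for all $m\le\varepsilon_0 t$. Taking conditional expectations,
\[
\PP[W_{n_2,\infty}>t]\ \le\ \PP[Z_{n_2}>\varepsilon_0 t]\ +\ C\,t^{-\alpha_0}\,\EE[Z_{n_2}]\ =\ \PP[Z_{n_2}>\varepsilon_0 t]\ +\ C\,t^{-\alpha_0}\lambda(1)^{n_2},
\]
and since $\Lambda(1)<0$ and $n_2\ge n\asymp\log t$, the second summand is $t^{-\alpha_0}$ times a negative power of $t$, hence $o(t^{-\alpha_0}(\log t)^{-\delta})$ for every $\delta$.

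Everything thus reduces to bounding $\PP[Z_{n_2}>\varepsilon_0 t]$, where the threshold is in logarithmic terms $\rho_0 n_2-\rho_0 b\sqrt{n\log n}+O(1)$, i.e.\ slightly below the large-deviation level $e^{\rho_0 n_2}$; the plain Markov bound with exponent $\alpha_0$ only yields $O(t^{-\alpha_0})$, so the extra factor must come from optimising the Chernoff exponent. The crucial input is that $\EE[Z_m^{\alpha}]\le c_\eta\,\lambda(\alpha)^m$ for all $m$ \emph{uniformly} in $\alpha$ over a fixed neighbourhood $[\alpha_0-\eta,\alpha_0+\eta]$ of $\alpha_0$, which follows from the monotonicity in Step~1 of the proof of Proposition~\ref{prop:momzn} and the local boundedness of $\alpha\mapsto c_Z(\alpha)$ in Corollary~\ref{cor:3.3}. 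Writing the Chernoff exponent with $\alpha=\alpha_0-\theta$ and Taylor-expanding $\Lambda$ at $\alpha_0$, its $\theta$-dependent part is $-\theta\rho_0 b\sqrt{n\log n}+\frac{1}{2}\sigma(\alpha_0)^2\theta^2 n_2+o(1)$; the minimiser $\theta^{\ast}=\rho_0 b\sqrt{n\log n}/(\sigma(\alpha_0)^2 n_2)\to0$ contributes $-\frac{\rho_0^2 b^2}{2\sigma(\alpha_0)^2}\log n\,(1+o(1))$ (using $n\log n/n_2=\log n(1+o(1))$), while the $\theta$-free part equals $-\alpha_0\log(\varepsilon_0 t)=-\alpha_0\log t+O(1)$. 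Hence $\PP[Z_{n_2}>\varepsilon_0 t]\le C\,t^{-\alpha_0}n^{-\rho_0^2 b^2(1+o(1))/(2\sigma(\alpha_0)^2)}$, and since $n\asymp\log t$ it is enough to take $b$ large enough that $\rho_0^2 b^2/(2\sigma(\alpha_0)^2)\ge\delta$. The delicate point, as flagged, is to never sum the moments $\EE[Z_j^{\alpha_0+\varepsilon}]$ over the infinitely many late generations, but instead to route the whole estimate through the single-big-jump reduction and a uniform Chernoff bound for $Z_{n_2}$ alone.
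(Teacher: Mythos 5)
Your treatment of $\PP[W_{n_1}>t]$ matches the paper's argument exactly: apply Lemma~\ref{lem:PrimEst} with $\alpha=\alpha_0$, $N=n_1$, $M=\log t$ and $\varepsilon=\sqrt{\log n/n}$; the linear-in-$\varepsilon$ terms collapse to $-\rho_0 b\log n$ and taking $b$ large beats the polynomial prefactor. For $\PP[W_{n_2,\infty}>t]$ you dismiss the union bound over generations as divergent and take another route, but the union bound does work once the Markov exponent is tilted \emph{downward}: since $\lambda(\alpha_0)=1$, we have $\lambda(\alpha_0-\varepsilon)<1$, and Proposition~\ref{prop:momzn} together with Corollary~\ref{cor:3.3} gives $\EE[Z_k^{\alpha_0-\varepsilon}]\le c\,\lambda(\alpha_0-\varepsilon)^k$, so the series over $k>n_2$ converges geometrically. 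The paper takes $\varepsilon(k,n)=\sqrt{\log n/n}$ for $n_2<k\le Nn$ and a fixed small $\varepsilon_2$ for $k>Nn$; the cancellation near $k=n_2$ produces exactly the $-\rho_0 b\log n$ you extract in your Chernoff step for $Z_{n_2}$. So the downward tilt you apply to $Z_{n_2}$ alone is already the whole idea.

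The route you take instead has a genuine gap. You write $W_{n_2,\infty}=\sum_{k=1}^{Z_{n_2}}V^{(k)}$ and claim that, given $Z_{n_2}$, the subtree totals $V^{(k)}$ are i.i.d.\ copies of (something dominated by) $W_\infty$. They are not: the $V^{(k)}$ are i.i.d.\ only \emph{conditionally on the future environment} $\{Q_j\}_{j\ge n_2}$, which they all share, and are therefore dependent. In BPRE the heavy tail of $W_\infty$ is produced by the environment rather than by one large family; on an exceptional environment the conditional means $\EE[V^{(k)}\mid\mathcal{Q}]\approx R_\infty$ blow up simultaneously for every $k$, so (by the Kesten tail for $R_\infty$) the realistic tail of $\sum_{k=1}^m V^{(k)}$ behaves like $\PP[mR_\infty>t]\asymp m^{\alpha_0}t^{-\alpha_0}$, which exceeds $Cmt^{-\alpha_0}$ for all $m$ above a fixed threshold since $\alpha_0>1$. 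The i.i.d.\ single-big-jump bound $\PP[\sum_{k=1}^m V^{(k)}>t]\le Cmt^{-\alpha_0}$ you invoke is thus false here; with the corrected $m^{\alpha_0}$ scaling, averaging over $Z_{n_2}$ gives $t^{-\alpha_0}\EE[Z_{n_2}^{\alpha_0}]\to c_Z(\alpha_0)\,t^{-\alpha_0}$ by Proposition~\ref{prop:momzn}, which stalls at order $t^{-\alpha_0}$ and misses the required $(\log t)^{-\delta}$ factor. It is the reduction itself that fails, not a detail of it.
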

\begin{proof}
Applying Lemma \ref{lem:PrimEst} and choosing $\varepsilon = \sqrt{\log n / n}$ we obtain
\begin{align*}
\P[W_{n_1} > t] &\le n_1^{2(\alpha_0+1)} t^{-\alpha_0-\eps} e^{n_1 \rho \eps + c n_1 \eps^2}
\le t^{-\alpha_0} n^{2(\alpha_0+1)}e^{-b\log n} e^{c\log n}\\
& = t^{-\alpha_0} n^{2(\a_0+1) +c-b}
\le C t^{-\a_0} (\log t)^{-\delta}
\end{align*}
for appropriately large $b$.

To prove the second part of the Lemma we proceed similarly as above, but this time $\eps$ depends also on the parameter $k$: $\eps = \eps(k,n)>0$. We estimate
\begin{align*}
\P[W_{n_2,\8} > t] &\le \sum_{k=n_2+1}^\8 \P\bigg[ Z_k > \frac{t}{2(k-n_2)^2}\bigg]\\
&\le C \sum_{k=n_2+1}^\8 \E\big[  Z_k^{\a_0-\eps(k,n)} \big] t^{-\a_0 + \eps(k,n)} (k-n_2)^{2\a_0}\\
&\le C t^{-\a_0} \sum_{k=n_2    +1}^\8 t^{\eps(k,n)} \l(\a_0-\eps(k,n))^k (k-n_2)^{2\a_0}.
\end{align*}
Now for some large $N$ we  consider separately two cases when $k\le Nn$ and $k>Nn$.
First we consider large values of $k$ and then  we just choose $\eps(k,n)=\eps_2$ for some small fixed $\eps_2$. Let
$\gamma = \l(\a_0-\eps_2)<1$.
\begin{align*}
t^{-\a_0} \sum_{k=Nn}^\8 t^{\eps(k,n)} \l(\a_0-\eps(k,n))^k (k-n_2)^{2\a_0}
&\le t^{-\a_0}  t^{\eps_2 } \sum_{k>Nn} \gamma^k k^{2\a} \\
&\le t^{-\a_0}  t^{\eps_2 } \gamma^{\frac{nN}2} \sum_{k>Nn} \gamma^{k/2} k^{2\a} \\
&\le C t^{-\a_0}  t^{-\delta}
\end{align*}
for appropriately large $N$.

 In the second case choose $\eps(k,n) = \eps_1 = \sqrt{\log n/n}$ and recall $\l(\a_0-\eps) \le e^{-\eps\rho_0 + c\eps^2}$. Then
\begin{align*}
t^{-\a_0} \sum_{k=n_2+1}^{Nn} t^{\eps(k,n)} \l(\a_0-\eps(k,n))^k &(k-n_2)^{2\a_0}\\
&\le t^{-\a_0}  e^{\rho_0 \sqrt{n \log n}} \sum_{n_2 \le k \le Nn}   e^{ - \rho_0 k\sqrt{\log n/n}}   e^{C k \log n/ n}
 (k-n_2)^{2\a_0}\\
&\le t^{-\a_0}  e^{\rho_0 \sqrt{n \log n}} \cdot Nn \cdot    e^{-  \rho_0 n_2 \sqrt{\log n/n}}   e^{C N \log n/ n}
 (Nn)^{2\a_0}\\
&\le C_N  t^{-\a_0}  e^{- \rho_0 b \log n } n^{CN + 2\a_0+1}\\
&\le C_N t^{-\a_0} n^{-\rho_0 b + CN + 2\a_0+1}\\
&\le C_N t^{-\a_0} (\log t)^{-\delta}
\end{align*}
for large $b$.
\end{proof}

\begin{lem}\label{lem:m2}
For $\rho=\rho_0$ we have
$$
\P\Big[ \big| W_{n_y} - Z_{n'} \Pi_{n',n_1-1} R_{n_1,n_y-1} \big| > t
\Big] = o(t^{-\a_0}) \qquad t\to\8,
$$ where $n_y = n_1 + c_0y\sqrt{\log n}$, $n_1$ as in Lemma \ref{lem:m1} and $n'= K\log n$ for large $K$ and $c_0 = \sigma_0\rho_0^{-3/2}$.
\end{lem}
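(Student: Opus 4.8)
The argument is the one used for Lemma~\ref{lem:5.7}, run with the pair $(n_1,n_y)$ in place of $(\overline n,n)$; the only structural novelty is that under the Cram\'er condition $\lambda(\alpha_0)=1$, so $\Lambda(\alpha_0)=0$ and $\overline\alpha=\alpha_0$, whence the target $o(t^{-\alpha_0})$ is precisely $o(t^{-\overline\alpha})$. First I would use the decomposition
\[
W_{n_y}-Z_{n'}\Pi_{n',n_1-1}R_{n_1,n_y-1}=W_{n_1}+\big(W_{n_1,n_y}-Z_{n_1}R_{n_1,n_y-1}\big)+\big(Z_{n_1}-Z_{n'}\Pi_{n',n_1-1}\big)R_{n_1,n_y-1}
\]
and bound the probability that the left side exceeds $t$ in absolute value by the sum of the probabilities that the three summands exceed $t/3$. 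The first of these is handled directly: Lemma~\ref{lem:m1} (applicable since $\rho=\rho_0$ and $n_1=n-b\sqrt{n\log n}$ with $b$ large) gives $\PP[W_{n_1}>t/3]\le Ct^{-\alpha_0}(\log t)^{-\delta}=o(t^{-\alpha_0})$.

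For the second summand I would invoke Lemma~\ref{lem:diff} with $m=n_1$ to rewrite it as $\sum_{k=n_1+1}^{n_y}(Z_k-A_{k-1}Z_{k-1})(1+R_{k,n_y-1})$, a sum of $n_y-n_1=O(\sqrt{\log n})$ terms; then split $t/3$ among them with the summable weights $(2(k-n_1)^2)^{-1}$ and apply Markov's inequality at exponent $\alpha_0+\varepsilon$ with $\varepsilon$ small. Since $Z_k-A_{k-1}Z_{k-1}$ depends only on $Q_0,\dots,Q_{k-1}$ and the offspring variables up to generation $k-1$, while $R_{k,n_y-1}$ depends on $Q_k,\dots,Q_{n_y-2}$, the relevant expectation factors; Corollary~\ref{cor:3} gives $\EE[|Z_k-A_{k-1}Z_{k-1}|^{\alpha_0+\varepsilon}]\le c\gamma^k$ with $\gamma<1$ (here one uses that $\lambda$ is strictly convex with $\lambda(0)=\lambda(\alpha_0)=1$, so $\lambda(1)<1$ and a comparison exponent strictly between $\alpha_{min}$ and $\alpha_0$ is available), and Lemma~\ref{lem:momrn} gives $\EE[(1+R_{k,n_y-1})^{\alpha_0+\varepsilon}]\le c\lambda(\alpha_0+\varepsilon)^{n_y-1-k}\le 1+o(1)$ because this perpetuity has length $O(\sqrt{\log n})$. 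Summing over $k\ge n_1+1$ yields $t^{-\alpha_0-\varepsilon}$ times a polylogarithmic factor times $\gamma^{n_1}$, and since $n_1\sim\rho_0^{-1}\log t$ the factor $\gamma^{n_1}$ beats the polylog, so this term is $o(t^{-\alpha_0})$.

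For the third summand I would use formula~\eqref{eq:z-piN} with $m=n'$, $j=n_1$ to write it as $\sum_{k=n'+1}^{n_1}(Z_k-A_{k-1}Z_{k-1})\Pi_{k,n_1-1}R_{n_1,n_y-1}$, split $t/3$ with weights $(2(k-n')^2)^{-1}$, apply Markov at exponent $\alpha_0+\varepsilon$, and factor the expectation over the independent blocks $Z_k-A_{k-1}Z_{k-1}$, $\Pi_{k,n_1-1}$, $R_{n_1,n_y-1}$, using $\EE[\Pi_{k,n_1-1}^{\alpha_0+\varepsilon}]=\lambda(\alpha_0+\varepsilon)^{n_1-k}$. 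The geometric sum $\sum_{k\ge n'+1}\gamma^k\lambda(\alpha_0+\varepsilon)^{n_1-k}$, convergent since $\gamma<1<\lambda(\alpha_0+\varepsilon)$, is of order $\lambda(\alpha_0+\varepsilon)^{n_1-n'}\gamma^{n'}$; combining this with $t^{-\alpha_0-\varepsilon}=t^{-\alpha_0}e^{-\varepsilon\rho_0 n(1+o(1))}$ and $\lambda(\alpha_0+\varepsilon)^{n_1-n'}\le e^{n(\rho_0\varepsilon+c\varepsilon^2)}$, the $e^{\mp\varepsilon\rho_0 n}$ factors cancel and one is left with $t^{-\alpha_0}e^{cn\varepsilon^2}\gamma^{n'}$ up to polylogarithmic corrections. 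Taking $\varepsilon=c_2\sqrt{\log n/n}$ turns $e^{cn\varepsilon^2}$ into a power of $\log t$, after which choosing $K$ large (recall $n'=K\log n$) makes $\gamma^{n'}=n^{K\log\gamma}$ dominate that power, which gives $o(t^{-\alpha_0})$ and completes the proof.

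The main obstacle is precisely this balancing in the third summand: because $\Pi_{k,n_1-1}$ can have length of order $n$, its $(\alpha_0+\varepsilon)$-th moment grows like $e^{\varepsilon\rho_0 n}$, exactly the order of the gain $t^{-\varepsilon}$ produced by Markov's inequality, so no decay comes from these factors and the entire saving must come from $\gamma^{n'}$; this forces the two-stage choice of parameters (fix $\varepsilon$ of order $\sqrt{\log n/n}$ first, then take $K$ large) and a careful bookkeeping of the powers of $n$ and $\log t$. The only other points needing (routine) attention are the verification that $\gamma<1$ in Corollary~\ref{cor:3} and the independence of the disjoint environment blocks used to factor the expectations.
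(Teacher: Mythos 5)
Your proposal is correct and follows exactly the route the paper intends: the paper's one-line proof defers to the argument of Lemma~\ref{lem:5.7}, and since the statement involves $W_{n_y}$ rather than an increment $W_{n_1,n_y}$, one must first peel off $W_{n_1}$ using Lemma~\ref{lem:m1}, then run the two-stage telescoping comparison of Lemma~\ref{lem:5.7} (via Lemma~\ref{lem:diff}, \eqref{eq:z-piN}, Corollary~\ref{cor:3}, and the perpetuity moment bound) with $(\overline n,n)$ replaced by $(n_1,n_y)$, which is precisely what you do. Your observations that $\lambda(\alpha_0)=1$ forces Markov at $\alpha_0+\varepsilon$ with $\varepsilon$ of order $\sqrt{\log n/n}$ to control $e^{cn\varepsilon^2}$, and that the saving in the longest-range term must come entirely from $\gamma^{n'}$ with $K$ large, correctly identify the only delicate point, and this is the same balancing used in the paper's proof of Lemma~\ref{lem:5.7}.
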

This Lemma can be proved exactly in the same way as Lemma \ref{lem:5.7}. We left details for the reader.
\begin{proof}[Proof of Theorem \ref{thm:mthmclt}]
{\sc Step 1. Law of large numbers.} The SLLN is a direct consequence of Lemma \ref{lem:m1} and \eqref{eq:af-kes}. Indeed, for any $\eps>0$, we have
\begin{align*}
  \P\bigg[\bigg| \frac{T_t^W}{\log t} - \frac 1{\rho_0} \bigg| > \eps \; & \bigg|\; T_t^W <\8 \bigg] \\
  & \le \P\Big[ T_t^W < \log t\big(1/\rho_0 -\eps \big)\; \Big| \; T_t^W<\8   \Big]
  + \P\Big[ T_t^W > \log t\big(1/\rho_0 +\eps \big)\; \Big| \; T_t^W<\8   \Big]\\
  & \le \P\big[ W_{n_1} >t \; \big|\; T_t^W<\8   \Big]
+ \P\big[ W_{n_2,\8} >t \; \big|\;  T_t^W<\8   \Big]\\
  & \le C(\log t)^{-\delta}.
\end{align*}

\noindent
{\sc Step 2. Central limit theorem.} The second part of the
 Theorem can be proved using similar arguments as in the proof of Theorem 2.2 in \cite{buraczewski2014large}. However in our case some additional problems arise. Thus we focus here on the main arguments, emphasising the differences. We refer the reader to \cite{buraczewski2014large} for all the details.

\medskip

\noindent
{\sc Step 2a. Petrov's result.} The result follows essentially from Petrov's Theorem (Lemma \ref{lem: petrov1}) and first we explain how it should be applied. In view of Lemma \ref{lem:m2}  we need prove that
$$
\lim_{t\to\8} t^{\a_0 }\P\Big[ Z_{n'} \Pi_{n',n_1-1}R_{n_1, n_y-1}  > t \Big] = C_1 c(\a_0) \Phi(y).
$$
For 'fixed' $Z_{n'} R_{n_1,n_y-1}$ we want to apply Lemma   \ref{lem: petrov1}. This can be done only for some restricted set of values. The details are as follows. Let $\sigma_0=\sigma(\alpha_0)$ and
\begin{align*}
I(t) &= \big[ 0, \rho(n-n_1+n') + (y+D)\sigma_0 \sqrt{n_1-n'} \big]\\
V_n &= Z_{n'} R_{n_1,n_y-1}.
\end{align*}
Below we apply Lemma \ref{lem: petrov1} with $(n,t,\delta_n)$ replaced by $(n_1-n', e^{\rho_0(n_1-n')},
\frac{\rho_0(n-n_1+n')-s}{n_1-n'})$. Let $F_t$ be the distribution function of  $\log V_n$ (recall that $n$ depends on $t$), then
\begin{align*}
\P\big[ V_n \Pi_{n',n_1-1} > t, V_n \in I(t) \big]
&= \int_{ I(t)} \P\big[ \Pi_{n',n_1-1} > t e^{-s} \big] dF_t(s)\\
&= \frac{1+o(1)}{\alpha_0\sigma_0 \sqrt{2\pi (n_1 - n')}} \int_{ I(t)} t^{-\a_0} e^{\a_0 s}
e^{-\frac{(n_1-n')\delta_n}{2\sigma_0^2}} dF_t(s)\\
&= \frac{(1+o(1)) t^{-\a_0}}{ \alpha_0\sigma_0 \sqrt{2\pi (n_1 - n')}} \int_{ I(t)}  e^{\a_0 s}
e^{-\frac{(\rho(n-n_1+n')-s)^2}{2\sigma_0^2(n_1-n')}} dF_t(s).
\end{align*}
Next we change variables applying the transformation
$$
T_t(s) = \frac{s- \rho(n-n_1+n')}{\sigma_0\sqrt{n_1-n'}}
$$ and defining the  distribution $G_t = F_t \circ T_t^{-1}$ we obtain
\begin{equation}\label{eq:m8}
\P\big[ V_n \Pi_{n',n_y-1} > t, V_n \in I(t) \big]
= \frac{(1+o(1)) t^{-\a_0}}{\alpha_0 \sigma_0 \sqrt{2\pi (n_1 - n')}} \int_{-\frac{ \rho(n-n_1+n')}{\sigma_0\sqrt{n_1-n'}} }^y   e^{\a_0 T_t^{-1}(u)}
e^{-u^2/2} dG_t(u).
\end{equation}

\medskip

\noindent
{\sc Step 2b. Uniform convergence.}
To proceed further we need a technical  observation that for $-\8 < a< b< y$
\begin{equation}
\label{eq:m4}
\lim_{t\to \8 } e^{\a_0 s} \overline F_t(s) = C_1 c_1(\a_0) \ \mbox{uniformly for  }  s\in T_t^{-1}([a,b]),
\end{equation}
where $C_1$ is the Kesten-Goldie constant. More precisely, let $R_\8 =\sum_{k=1}^\8 A_1\ldots A_k$, then under assumptions of Theorem \ref{thm:mthmclt}  Kesten
\cite{kesten:1973} and Goldie \cite{goldie:1991} proved
\begin{equation}\label{eq:msss}
  \lim_{t\to\8} t^{\a_0} \P[R_\8 > t] = C_1 > 0.
\end{equation}

To prove the above statement we apply (4.35) from \cite{buraczewski2014large}
\begin{equation}
\label{eq:ms}
\lim_{t\to \8 } e^{\a_0 s} \P\big[ R_{n_1,n_y} > e^s \big]  \to C_1  \ \mbox{uniformly for  }  s\in T_t^{-1}([a,b])
\end{equation}
Then
\begin{align*}
  e^{\a_0 s} \overline F_t (s) & = e^{\a_0 s} \P\big[ Z_{n'} R_{n_1,n_y} > e^s \big]\\
  &= e^{\a_0 s } \sum_{k=1}^\8 \P\big[ R_{n_1, n_y} > e^s/k \big] \P[Z_{n'}=k]\\
  &= e^{\a_0 s } \sum_{k\le e^{\delta \sqrt n}} \P\big[ R_{n_1, n_y} > e^s/k \big] \P[Z_{n'}=k]
  + e^{\a_0 s } \sum_{k> e^{\delta \sqrt n}} \P\big[ R_{n_1, n_y} > e^s/k \big] \P[Z_{n'}=k]\\
  &= I + II.
\end{align*}
We will prove that the first term gives the asymptotic behaviour and the second one is negligible. To estimate the latter,  by the H\"older inequality and \eqref{eq:msss}, we may write
\begin{equation}\label{eq:mssss}
\begin{split}
  II & \le \sum_{k\ge e^{\delta \sqrt n}} k^{\a_0} \P[Z_{n'}=k] = \E\Big[ Z_{n'}^\a {\bf 1}_{\{ Z_{n'}> e^{\delta \sqrt n}\}}
  \Big]\\
  &\le \E\big[ Z_{n'}^{p\a} \big]^{1/p} \P\big[ Z_{n'} > e^{\delta \sqrt n} \big]^{1/q}\\
  &\le C e^{\Lambda(p\a) K \log n/p} e^{-\delta \a_0 \sqrt n/q} \E\big[ Z_{n'}^{\a_0}\big]^{1/q} = o(1)
\end{split}
\end{equation}
For $k\le e^{\delta \sqrt n}$, $s-\log k\in T^{-1}_t([a-2\delta,b])$ and by \eqref{eq:ms} we have that for small $\eps$ and large $n$
$$
(C_1 -\eps) \E\big[ Z_{n'}^\a {\bf 1}_{\{ k\le e^{\delta\sqrt n }\}}  \big] \le I \le
(C_1 +\eps) \E\big[ Z_{n'}^\a {\bf 1}_{\{ k\le e^{\delta\sqrt n }\}}  \big].
$$ Applying \eqref{eq:mssss} ,  Lemma~\ref{lem:s4}, passing first with $n\to\8$ and then with $\eps\to 0$, we obtain
$$
\lim_{s\to\8} I = C_1 c(\a_0).
$$

\medskip

\noindent
{\sc Step 2c. Convergence to the Lebesgue measure.} Now our aim is to prove that for any $f\in C_C(-\8,y)$ (continuous, compactly supported function in $(-\8,y)$):
\begin{equation}
\label{eq:m6}
\lim_{t\to\8} \int_{-\8}^y f(u) dH_t(u) = C_1 c(\a_0) \int_{-\8}^y f(u)du,
\end{equation}  where $dH_t(u) = \frac{e^{\a_0 T_t^{-1}(u)}}{\a_0 \sigma_0 \sqrt{n_1-n'}}dG_t(u)$
and
\begin{equation}\label{eq:m7}
  H_t(v,w) \le C (w-v) + \frac C{\sqrt{n_1}}
\end{equation} for $-\8 < v < w< y$ and some constant $C$.

Fix $-\8<v<w<y$, $v^*(t) = T_t^{-1}(v)$, $w^*(t) = T_t^{-1}(w)$, then integrating by parts
\begin{multline*}
H_t(v,w)\\ = -\frac{1}{\a_0\sigma_0\sqrt{n_1-n'}} \big(
e^{\a_0 w^*(t)}  \overline F_t(w^*(t))
 - e^{\a_0 v^*(t)}  \overline F_t(v^*(t))
\big) + \frac{1}{\sigma_0\sqrt{n_1-n'}} \int_{v^*(t)}^{w^*(t)} e^{\a_0 u} \overline F_t (u)du.
\end{multline*}
and \eqref{eq:m4} implies \eqref{eq:m7}.

To prove \eqref{eq:m6} observe that the first term above is negligible and  we again apply \eqref{eq:m4} and obtain
$$
\lim_{t\to\8} H_t(v,w) =  \lim_{t\to \8} \frac{1}{\sigma_0 \sqrt{n_1-n'}} \int_{v^*(t)}^{w^*{t}} e^{\a_0 u} \overline F_t(u)du
= \frac{C_1 c(\a_0) (w^*(t) - v^*(t))}{\sigma_0\sqrt{n_1-n'}} = C_1 c(\a_0)(w-v).
$$
Finally applying the standard procedure and approximating an arbitrary $f$ by Riemann sums we obtain \eqref{eq:m6}.

\medskip

\noindent
{\sc Step 2d. Conclusion.} Now we are able to conclude. For large $N$ and small $\delta$ we split the integral \eqref{eq:m8} into three parts:
$(-\frac{ \rho(n-n_1+n')}{\sigma_0\sqrt{n_1-n'}} ,-N]$, $(-N, y-\delta]$, $(y-\delta, y)$,
and in view of \eqref{eq:m6} and \eqref{eq:m7} we have
\begin{align*}
  \lim_{t\to\8} \int_{-N}^{y-\delta} e^{-u^2/2} dH_t(u) & = C_1 c(\a_0) \int_{-N}^{y-\delta} e^{-u^2/2}du,\\
  \lim_{t\to\8} \int^{-N}_{-\frac{ \rho(n-n_1+n')}{\sigma_0\sqrt{n_1-n'}} } e^{-u^2/2} dH_t(u) & \le e^{-N^2/2},\\
  \lim_{t\to\8} \int_{y-\delta}^{y} e^{-u^2/2} dH_t(u) & \le C\delta.
\end{align*}
Passing with $N\to\8$ and $\delta\to 0$ we obtain
$$
\lim_{t\to\8} t^{\a_0} \P\big[ V_n \Pi_{n',n_y-1} > t, V_n \in I(t) \big] = \frac{C_1 c(\a_0)}{\sqrt{2\pi}} \int_{-\8}^y
e^{-u^2/2} du = \frac{C_1 c(\a_0)}{\sqrt{2\pi}} \Phi(y)
$$
\medskip

\noindent
{\sc Step 2e. The negligible part.} To complete the proof we need to justify that the remaining part is negligible, i.e.
$$
\lim_{t\to\8} t^{\a_0} \P\big[ V_n \Pi_{n',n_y-1} > t, V_n \notin I(t) \big] = 0.
$$ However we omit the arguments here and refer to \cite{buraczewski2014large} (proof of Theorem 2, step 4) for more details.
\end{proof}

\nocite{*}
\bibliographystyle{plain}
\bibliography{dbura_pdysz_PLDEforBPRE_bib}

\begin{thebibliography}{10}

\bibitem{afanasyev2001maximum}
V.~I. Afanasyev.
\newblock On the maximum of a subcritical branching process in a random
  environment.
\newblock {\em Stochastic processes and their applications}, 93(1):87--107,
  2001.

\bibitem{afanasyev2013high}
V.~I. Afanasyev.
\newblock High level subcritical branching processes in a random environment.
\newblock {\em Proceedings of the Steklov Institute of Mathematics},
  282(1):4--14, 2013.

\bibitem{afanaseev:2014}
V.~I. Afanasyev.
\newblock Functional limit theorems for high-level subcritical branching
  processes in a random environment.
\newblock {\em Diskret. Mat.}, 26(2):6--24, 2014.

\bibitem{Afanasyev:Boinghoff:Kersting:Vatutin:2012}
V.~I. Afanasyev, C.~B\"oinghoff, G.~Kersting, and V.~A. Vatutin.
\newblock Limit theorems for weakly subcritical branching processes in random
  environment.
\newblock {\em J. Theoret. Probab.}, 25(3):703--732, 2012.

\bibitem{Afanasyev:Boinghoff:Kersting:Vatutin:2014}
V.~I. Afanasyev, Ch. B\"oinghoff, G.~Kersting, and V.~A. Vatutin.
\newblock Conditional limit theorems for intermediately subcritical branching
  processes in random environment.
\newblock {\em Ann. Inst. Henri Poincar\'e Probab. Stat.}, 50(2):602--627,
  2014.

\bibitem{Afanasyev:Geiger:Kersting:Vatutin:2005}
V.~I. Afanasyev, J.~Geiger, G.~Kersting, and V.~A. Vatutin.
\newblock Criticality for branching processes in random environment.
\newblock {\em Ann. Probab.}, 33(2):645--673, 2005.

\bibitem{athreya2012branching}
K.~B. Athreya and P.~E. Ney.
\newblock {\em Branching processes}, volume 196.
\newblock Springer Berlin Heidelberg, 1972.

\bibitem{bahadur:rao}
R.~R. Bahadur and R.~Ranga~Rao.
\newblock On deviations of the sample mean.
\newblock {\em Ann. Math. Statist.}, 31:1015--1027, 1960.

\bibitem{bansaye:berestycki}
V.~Bansaye and J.~Berestycki.
\newblock Large deviations for branching processes in random environment.
\newblock {\em Markov Process. Related Fields}, 15(4):493--524, 2009.

\bibitem{Bansaye:Boinghoff:2011}
V.~Bansaye and C.~B\"oinghoff.
\newblock Upper large deviations for branching processes in random environment
  with heavy tails.
\newblock {\em Electron. J. Probab.}, 16:no. 69, 1900--1933, 2011.

\bibitem{Birkner:Geiger:Kersting:2005}
M.~Birkner, J.~Geiger, and G.~Kersting.
\newblock Branching processes in random environment---a view on critical and
  subcritical cases.
\newblock In {\em Interacting stochastic systems}, pages 269--291. Springer,
  Berlin, 2005.

\bibitem{Boinghoff:Kersting:2010}
C.~B\"oinghoff and G.~Kersting.
\newblock Upper large deviations of branching processes in a random
  environment---offspring distributions with geometrically bounded tails.
\newblock {\em Stochastic Process. Appl.}, 120(10):2064--2077, 2010.

\bibitem{buraczewski2014large}
D.~Buraczewski, J.~F. Collamore, E.~Damek, and J.~Zienkiewicz.
\newblock Large deviation estimates for exceedance times of perpetuity
  sequences and their dual processes.
\newblock {\em Ann. Probab.}, 44(6):3688--3739, 2016.

\bibitem{buraczewski:damek:mikosch}
D.~Buraczewski, E.~Damek, and T.~Mikosch.
\newblock {\em Stochastic models with power-law tails. The equation $X=AX+B$}.
\newblock Springer Series in Operations Research and Financial Engineering.
  Springer, [Cham], 2016.

\bibitem{buraczewski2015pointwise}
D.~Buraczewski, E.~Damek, and J.~Zienkiewicz.
\newblock Pointwise estimates for exceedance times of perpetuity sequences.
\newblock {\em Stochastic Processes and their Applications}, 128(9):2923--2951,
  2018.

\bibitem{buraczewski:dyszewski}
D.~Buraczewski and P.~Dyszewski.
\newblock Precise large deviations for random walks in random environment.
\newblock {\em Electronic Journal of Probability}, 23(114), 2018.

\bibitem{buraczewski2016precise}
D.~Buraczewski and M.~Ma{\'s}lanka.
\newblock Precise large deviations of the first passage time.
\newblock {\em to appear in Proc. AMS}, 2019.

\bibitem{dembo1996tail}
A.~Dembo, Y.~Peres, and O.~Zeitouni.
\newblock Tail estimates for one-dimensional random walk in random environment.
\newblock {\em Communications in Mathematical Physics}, 181(3):667--683, 1996.

\bibitem{DZ}
A.~Dembo and O.~Zeitouni.
\newblock {\em Large deviations techniques and applications}.
\newblock Jones and Bartlett Publishers, Boston, MA, 1993.

\bibitem{geiger2003limit}
J.~Geiger, G.~Kersting, and V.~A. Vatutin.
\newblock Limit theorems for subcritical branching processes in random
  environment.
\newblock In {\em Annales de l'Institut Henri Poincar{\'e} (B) Probabilit{\'e}s
  et Statistiques}, volume~39, pages 593--620, 2003.

\bibitem{goldie:1991}
C.~M. Goldie.
\newblock Implicit renewal theory and tails of solutions of random equations.
\newblock {\em Ann. Appl. Probab.}, 1(1):126--166, 1991.

\bibitem{grama:liu:miqueu}
I.~Grama, Q.~Liu, and E.~Miqueu.
\newblock Berry-{E}sseen's bound and {C}ramer's large deviation expansion for a
  supercritical branching process in a random environment.
\newblock {\em Stochastic Process. Appl.}, 127:1255--1281, 2017.

\bibitem{guivarc2001proprietes}
Y.~Guivarc'h and Q.~Liu.
\newblock Propri{\'e}t{\'e}s asymptotiques des processus de branchement en
  environnement al{\'e}atoire.
\newblock {\em Comptes Rendus de l'Acad{\'e}mie des Sciences-Series
  I-Mathematics}, 332(4):339--344, 2001.

\bibitem{gut2009stopped}
A.~Gut.
\newblock {\em Stopped random walks}.
\newblock Springer, 2009.

\bibitem{haccou2005branching}
P.~Haccou, P.~Jagers, and V.~A. Vatutin.
\newblock {\em Branching processes: variation, growth, and extinction of
  populations}.
\newblock Number~5. Cambridge {U}niversity {P}ress, 2005.

\bibitem{huang2012moments}
Ch. Huang and Q.~Liu.
\newblock Moments, moderate and large deviations for a branching process in a
  random environment.
\newblock {\em Stochastic Processes and their Applications}, 122(2):522--545,
  2012.

\bibitem{kersting2017discrete}
G.~Kersting and V.~Vatutin.
\newblock {\em Discrete time branching processes in random environment}.
\newblock John Wiley \& Sons, 2017.

\bibitem{kesten:1973}
H.~Kesten.
\newblock Random difference equations and renewal theory for products of random
  matrices.
\newblock {\em Acta Math.}, 131:207--248, 1973.

\bibitem{kesten1975limit}
H.~Kesten, M.~V. Kozlov, and F.~Spitzer.
\newblock A limit law for random walk in a random environment.
\newblock {\em Compositio Mathematica}, 30(2):145--168, 1975.

\bibitem{kozlov:2006}
M.~V. Kozlov.
\newblock On large deviations of branching processes in a random environment: a
  geometric distribution of the number of descendants.
\newblock {\em Diskret. Mat.}, 18(2):29--47, 2006.

\bibitem{kozlov:2010}
M.~V. Kozlov.
\newblock On large deviations of strictly subcritical branching processes in a
  random environment with a geometric distribution of descendants.
\newblock {\em Teor. Veroyatn. Primen.}, 54(3):439--465, 2009.

\bibitem{Lalley}
S.~P. Lalley.
\newblock Limit theorems for first-passage times in linear and nonlinear
  renewal theory.
\newblock {\em Adv. in Appl. Probab.}, 16(4):766--803, 1984.

\bibitem{petrov1965probabilities}
V.~V. Petrov.
\newblock On the probabilities of large deviations for sums of independent
  random variables.
\newblock {\em Theory of Probability \& Its Applications}, 10(2):287--298,
  1965.

\bibitem{smith1969branching}
W.~L. Smith and W.~E. Wilkinson.
\newblock On branching processes in random environments.
\newblock {\em The Annals of Mathematical Statistics}, pages 814--827, 1969.

\bibitem{tanny1}
D.~Tanny.
\newblock Limit theorems for branching processes in a random environment.
\newblock {\em Ann. Probability}, 5(1):100--116, 1977.

\bibitem{tanny1988necessary}
D.~Tanny.
\newblock A necessary and sufficient condition for a branching process in a
  random environment to grow like the product of its means.
\newblock {\em Stochastic Processes and their Applications}, 28(1):123--139,
  1988.

\end{thebibliography}
\end{document}